\documentclass[reqno,10pt]{amsart}

\usepackage{a4wide}
\usepackage{mathrsfs}
\usepackage{mathtools}
\usepackage{amsmath}
\usepackage{amssymb}
\usepackage{bbm}
\usepackage{esint}
\numberwithin{equation}{section}
\usepackage[colorlinks,citecolor=green,linkcolor=red]{hyperref}
\usepackage{tikz-cd}

\usepackage[latin1]{inputenc}

\newcommand{\N}{\mathbb{N}}

\newcommand{\R}{\mathbb{R}}
\newcommand{\B}{\mathbb{B}}
\newcommand{\sfd}{{\sf d}}
\renewcommand{\d}{{\mathrm d}}

\newcommand{\X}{{\rm X}}

\newcommand{\mm}{\mathfrak{m}}
\newcommand{\1}{\mathbbm 1}
\newcommand{\q}{\mathfrak{q}}

\newcommand{\fr}{\penalty-20\null\hfill\(\blacksquare\)}

\newcommand{\restr}[1]{\lower3pt\hbox{$|_{#1}$}}

\newcommand{\limi}{\varliminf}
\newcommand{\lims}{\varlimsup}

\newtheorem{theorem}{Theorem}[section]

\newtheorem{lemma}[theorem]{Lemma}
\newtheorem{proposition}[theorem]{Proposition}
\newtheorem{definition}[theorem]{Definition}
\newtheorem{remark}[theorem]{Remark}
\newtheorem{example}[theorem]{Example}

\linespread{1.15}
\setcounter{tocdepth}{2}

\title[On the integration of Banach modules and its applications to ${\sf RCD}$ spaces]{On the integration of Banach modules and its \\ applications to vector calculus on ${\sf RCD}$ spaces}

\author[Emanuele Caputo]{Emanuele Caputo}
\address[Emanuele Caputo]{Department of Mathematics and Statistics, P.O.\ Box 35 (MaD), FI-40014 University of Jyvaskyla, Finland}
\email{emanuele.e.caputo@jyu.fi}

\author[Milica Lu\v{c}i\'{c}]{Milica Lu\v{c}i\'{c}}
\address[Milica Lu\v{c}i\'{c}]{Department of Mathematics and Informatics, Faculty of Sciences, University of Novi Sad, Trg D.\ Obradovi\'{c}a 4, 21000 Novi Sad, Serbia}
\email{milica.lucic@dmi.uns.ac.rs}

\author[Enrico Pasqualetto]{Enrico Pasqualetto}
\address[Enrico Pasqualetto]{Department of Mathematics and Statistics, P.O.\ Box 35 (MaD), FI-40014 University of Jyvaskyla, Finland}
\email{enrico.e.pasqualetto@jyu.fi}

\author[Ivana Vojnovi\'{c}]{Ivana Vojnovi\'{c}}
\address[Ivana Vojnovi\'{c}]{Department of Mathematics and Informatics, Faculty of Sciences, University of Novi Sad, Trg D.\ Obradovi\'{c}a 4, 21000 Novi Sad, Serbia}
\email{ivana.vojnovic@dmi.uns.ac.rs}
\date{\today}
\keywords{Normed module, integration, RCD space, BV function, needle decomposition}
\subjclass[2020]{28A50, 53C23, 26A45}
\begin{document}
\maketitle
\begin{abstract}
A finite-dimensional ${\sf RCD}$ space can be foliated into sufficiently regular leaves, where a differential calculus can be performed. Two important examples are given by the measure-theoretic boundary of the superlevel set of a function of
bounded variation and the needle decomposition associated to a Lipschitz function. The aim of this paper is to connect the vector calculus on the lower dimensional leaves with the one on the base space.
In order to achieve this goal, we develop a general theory of integration of $L^0$-Banach $L^0$-modules of independent interest. Roughly speaking, we study how to `patch together' vector fields defined on the leaves that
are measurable with respect to the foliation parameter.
\end{abstract}
\tableofcontents
\section{Introduction}
In the setting of metric measure spaces, the theory of \(L^0\)-normed \(L^0\)-modules was introduced by Gigli in \cite{Gigli14} and later refined in \cite{Gigli17}. The goal was to develop a vector calculus
in this nonsmooth framework, which could provide effective notions of measurable \(1\)-forms and vector fields. Indeed, one of the main objects of study in \cite{Gigli14,Gigli17} is the \emph{cotangent module}
\(L^0(T^*\X)\) over a metric measure space \((\X,\sfd,\mm)\), which is built upon the Sobolev space \(W^{1,2}(\X)\) and comes naturally with a differential operator \(\d\colon W^{1,2}(\X)\to L^0(T^*\X)\).
By duality, one then defines the \emph{tangent module} \(L^0(T\X)\). Both \(L^0(T^*\X)\) and \(L^0(T\X)\) are \emph{\(L^0(\mm)\)-Banach \(L^0(\mm)\)-modules}, see Definition \ref{def:Banach_module}. The study
of tangent and cotangent modules had remarkable analytic and geometric applications in the field of analysis on metric measure spaces, especially in the fast-developing theory of metric measure spaces verifying
synthetic Riemannian Ricci curvature lower bounds, the so-called \({\sf RCD}\) spaces (see the survey \cite{AmbICM} and the references therein indicated). However, the language of normed modules is used in other
research fields as well (see e.g.\ \cite{HLR91,Guo-2011}). Due to this reason, the technical machinery we develop in this paper is for \(L^0\)-Banach \(L^0\)-modules over a \(\sigma\)-finite measure space.
\medskip

Let us consider a \emph{disjoint measure-valued map} \(q\mapsto\mu_q\) on a measurable space \((\X,\Sigma)\). Roughly speaking, the measures \(\mu_q\)
are concentrated on pairwise disjoint measurable subsets of \(\X\) and vary in a measurable way with respect to the parameter \(q\in Q\), where \((Q,\mathcal Q,\mathfrak q)\) is a given measure space;
see Definition \ref{def:measure-valuedmap}. In Definition \ref{def:Banach_module_bundle} we propose a notion of \emph{Banach module bundle} \((\{\mathscr M_q\}_{q\in Q},\Theta,\Phi)\) consistent with
\(q\mapsto\mu_q\). Shortly said, each \(\mathscr M_q\) is an \(L^0(\mu_q)\)-Banach \(L^0(\mu_q)\)-module, while \(\Theta\) and \(\Phi\) are families of \emph{test functions} and \emph{test sections},
respectively, which are used to declare which sections \(Q\ni q\mapsto v_q\in\mathscr M_q\) are measurable. The space of all measurable sections of \((\{\mathscr M_q\}_{q\in Q},\Theta,\Phi)\) is then
denoted by \(\int\mathscr M_q\,\d\mathfrak q(q)\) and called the \emph{integral} of \((\{\mathscr M_q\}_{q\in Q},\Theta,\Phi)\); see Definition \ref{def:integral_of_bundle}. The space
\(\int\mathscr M_q\,\d\mathfrak q(q)\) has a structure of \(L^0(\mu)\)-Banach \(L^0(\mu)\)-module, where we define \(\mu\coloneqq\int\mu_q\,\d\mathfrak q(q)\).
Conversely, it is also possible to \emph{disintegrate} a given countably-generated \(L^0(\mu)\)-Banach
\(L^0(\mu)\)-module \(\mathscr M\), thus obtaining a Banach module bundle whose integral is the original
space \(\mathscr M\); this claim is proved in Theorem \ref{thm:disintegration_countably_generated} (and see also Theorem \ref{thm:tech_quotient}).
\medskip

We now discuss the applications of the above machinery to the vector calculus on metric measure spaces verifying lower Ricci bounds. Let \((\X,\sfd,\mm)\) be an \({\sf RCD}(K,N)\) space,
where \(K\in\R\) and \(N\in[1,\infty)\). We focus on two different classes of foliations (of a portion) of \(\X\): the codimension-one \emph{level sets} of a function of bounded variation
(BV function, for short) and, somehow dually, the one-dimensional \emph{needle decomposition} induced by a Lipschitz function. More in details:
\begin{itemize}
\item[\(\rm a)\)] The \emph{Fleming--Rishel coarea formula} proved by Miranda Jr.\ in \cite{MIRANDA2003} states that, given a BV function \(f\in BV(\X)\), the total variation measure \(|Df|\) coincides
with the `superposition' \(\int_\R P(\{f>t\},\cdot)\,\d t\) of the perimeter measures of the superlevel sets \(\{f>t\}\); see Theorem \ref{thm:coarea}. The fact that the level sets \(\{f>t\}\)
can be regarded as codimension-one objects is corroborated by the results of \cite{Amb01} (see also \cite{BPSGaussGreen}). Moreover, as shown in \cite{debin2019quasicontinuous}, the regularity
of \(\sf RCD\) spaces guarantees that (sufficiently many) vector fields can be `traced' over the essential boundary of a set of finite perimeter \(E\), thus obtaining the space \(L^0_{P(E,\cdot)}(T\X)\).
In particular, one can define the \emph{unit normal} \(\nu_E\in L^0_{P(E,\cdot)}(T\X)\) to \(E\), as proved in \cite{bru2019rectifiability}. More generally, one can construct the space
of \(|Df|\)-a.e.\ defined vector fields \(L^0_{|Df|}(T\X)\) whenever \(f\in BV(\X)\), see \cite{BGBV}. With this said, in Theorem \ref{thm:disintegration_L0Df} we will prove that
\begin{equation}\label{eq:intro_BV}
L^0_{|Df|}(T\X)\cong\int L^0_{P(\{f>t\},\cdot)}(T\X)\,\d t\quad\text{ for every }f\in BV(\X),
\end{equation}
once the family \(\R\ni t\mapsto L^0_{P(\{f>t\},\cdot)}(T\X)\) is made into a Banach module bundle, in a suitable way. To be precise, \eqref{eq:intro_BV} is not entirely correct as it is written,
since the measure-valued map \(t\mapsto P(\{f>t\},\cdot)\) is not necessarily disjoint due to the possible presence of the jump part of \(|Df|\); this technical subtlety will be dealt with in
Proposition \ref{prop:coarea_disj}. Notice that one can regard \eqref{eq:intro_BV} as a Fubini-type theorem for measurable vector fields. Similar results, but without integrals of \(L^0\)-Banach \(L^0\)-modules, previously appeared in \cite{AntonelliPasqualettoPozzetta21} (for the squared distance functions from a point) and in \cite{BGBV} (for arbitrary BV functions).
\item[\(\rm b)\)] Klartag's localisation technique \cite{Klartag2017} has been generalised from the Riemannian setting to metric measure spaces verifying synthetic lower Ricci bounds, see the survey
\cite{Cavalletti2017} and the references therein. Given a \(1\)-Lipschitz function \(u\colon\X\to\R\), the so-called \emph{transport set} \(\mathcal T_u\) can be partitioned (up to \(\mm\)-null sets)
into the images \(\X_\alpha\) of suitable geodesics \(\gamma_\alpha\), which are the gradient flow lines of \(-\nabla u\) in a suitable sense. The restriction \(\mm|_{\mathcal T_u}\)
of the reference measure \(\mm\) to \(\mathcal T_u\) can be disintegrated as \(\mm|_{\mathcal T_u}=\int\mm_\alpha\,\d\mathfrak q(\alpha)\), where each \(\mm_\alpha=h_\alpha\mathcal H^1\) is a measure
concentrated on \(\X_\alpha\). As proved in \cite{CavMon15}, the `needles' \((\X_\alpha,\sfd,h_\alpha\mathcal H^1)\) are \({\sf RCD}(K,N)\) spaces. The intuition suggests that the tangent module
\(L^0(T\X_\alpha)\) is one-dimensional and that a Lipschitz function \(f\colon\X\to\R\) induces a vector field \(\partial_\alpha f\in L^0(T\X_\alpha)\), which is obtained by differentiating
\(f\circ\gamma_\alpha\). This is made precise by Theorem \ref{thm:disint_needles}, which says that
\begin{equation}\label{eq:intro_needles}
\int L^0(T\X_\alpha)\,\d\mathfrak q(\alpha)\cong\langle\nabla u\rangle_{\mathcal T_u},
\end{equation}
where \(\langle\nabla u\rangle_{\mathcal T_u}\subseteq L^0(T\X)\) is the \(L^0(\mm|_{\mathcal T_u})\)-Banach \(L^0(\mm|_{\mathcal T_u})\)-module generated by \(\nabla u\), and for any Lipschitz function \(f\colon\X\to\R\) the superposition \(\int\partial_\alpha f\,\d\mathfrak q(\alpha)\in\int L^0(T\X_\alpha)\,\d\mathfrak q(\alpha)\) corresponds,
via the identification in \eqref{eq:intro_needles}, to the element \(-\langle\nabla f,\nabla u\rangle\nabla u\in\langle\nabla u\rangle_{\mathcal T_u}\).
\end{itemize}
Finally, in Theorem \ref{thm:unit_normal_vs_needles} we combine the two different foliations of \(\X\) we described above. Assuming \(\mm(\X)<+\infty\) for simplicity and fixed a \(1\)-Lipschitz
function \(u\colon\X\to\R\) with \(\mm(\X\setminus\mathcal T_u)=0\), we know from Theorems \ref{thm:disintegration_L0Df} and \ref{thm:disint_needles} that \(\int P(\{u>t\},\cdot)\,\d t\cong L^0(T\X)\)
and \(\int L^0(T\X_\alpha)\,\d\mathfrak q(\alpha)\cong\langle\nabla u\rangle_\X\), respectively. Via these identifications, we can prove that \(\int\partial_\alpha u\,\d\mathfrak q(\alpha)\)
corresponds to \(-\int\nu_{\{u>t\}}\,\d t\).
\subsection*{Acknowledgements}
E.C.\ acknowledges the support from the Academy of Finland, grants no.\ 314789 and 321896. M.L.\ and I.V.\ gratefully acknowledge the financial support of the Ministry of Science, Technological Development and Innovation of the Republic of Serbia (Grant No.\ 451-03-47/2023-01/200125).
\section{Preliminaries}
Throughout the whole paper, our standing convention (unless otherwise specified) is that
\[
\textbf{all the measures under consideration are }\boldsymbol\sigma\textbf{-finite.}
\]
Moreover, in this paper a ring \(R\) is not assumed to have a multiplicative identity. However, when the identity \(1_R\) exists, then we require that each \(R\)-module \(M\) verifies
\(1_R\cdot v=v\) for every \(v\in M\).
\medskip

Given a collection of non-empty sets \(A_\star=\{A_q\}_{q\in Q}\), we define the space of its sections as
\[
\mathscr S(A_\star)\coloneqq\prod_{q\in Q}A_q.
\]
In the case where each \(A_q\) is a ring (resp.\ a vector space), the product space \(\mathscr S(A_\star)\)
is a ring (resp.\ a vector space) with respect to the componentwise operations.
\begin{remark}\label{rmk:prod_mod}{\rm
Let \(R_\star=\{R_q\}_{q\in Q}\) be a collection of commutative rings. For any \(q\in Q\), let \(M_q\) be a module over \(R_q\).
Then \(\mathscr S(M_\star)\) has a natural structure of module over the ring \(\mathscr S(R_\star)\).
\fr}\end{remark}

Let \((Q,\mathcal Q,\mathfrak q)\) be a measure space. Let \(A_\star=\{A_q\}_{q\in Q}\) be rings (resp.\ vector spaces). Then
\[
N_{\mathfrak q}\coloneqq\Big\{v\in\mathscr S(A_\star)\;\Big|\;\{q\in Q\,:\,v_q\neq 0\}\subseteq N\text{ for some }N\in\mathcal Q\text{ with }\mathfrak q(N)=0\Big\}
\]
is a two-sided ideal (resp.\ a vector subspace) of \(\mathscr S(A_\star)\). Therefore, the quotient space
\[
\mathscr S_{\mathfrak q}(A_\star)\coloneqq\mathscr S(A_\star)/N_{\mathfrak q}
\]
is a subring (resp.\ a vector subspace). We denote by \(\pi_{\mathfrak q}\colon\mathscr S(A_\star)\to\mathscr S_{\mathfrak q}(A_\star)\) the quotient map.

According to this notation, we denote by \(\mathscr S_{\mathfrak q}(\R)\) the set of all the (possibly non-measurable)
functions from \(Q\) to \(\R\), quotiented up to \(\mathfrak q\)-a.e.\ equality.
\begin{remark}{\rm
Let \((Q,\mathcal Q,\mathfrak q)\) be a measure space and let \(R_\star\), \(M_\star\) be as in Remark \ref{rmk:prod_mod}. Then the vector space
\(\mathscr S_{\mathfrak q}(M_\star)\) inherits a natural structure of module over the ring \(\mathscr S_{\mathfrak q}(R_\star)\).
\fr}\end{remark}
\subsection{Integration and disintegration of measures}
Let \((\X,\Sigma,\mu)\) be a measure space. Then we denote by \(L^0(\mu)\) the space of all real-valued measurable functions on \(\X\), quotiented up to \(\mu\)-a.e.\ identity.
The space \(L^0(\mu)\) is a Riesz space, where we declare that \(f,g\in L^0(\mu)\) satisfy \(f\leq g\) if and only if \(f(x)\leq g(x)\) for \(\mu\)-a.e.\ \(x\in\X\). Moreover,
\(L^0(\mu)\) is Dedekind complete, which means that every order-bounded set \(\{f_i\}_{i\in I}\subseteq L^0(\mu)\) admits both a supremum \(\bigvee_{i\in I}f_i\in L^0(\mu)\)
and an infimum \(\bigwedge_{i\in I}f_i\in L^0(\mu)\). We also denote by \(\mathcal L^0(\Sigma)\) the space of all real-valued measurable functions on \(\X\), while \(\pi_\mu\)
(or \([\cdot]_\mu\)) stands for the canonical projection map \(\mathcal L^0(\Sigma)\to L^0(\mu)\). Given any \(E\subseteq\X\), we denote by \(\1_E\) its
characteristic function. If \(E\in\Sigma\), then we denote \(\1^\mu_E\coloneqq[\1_E]_\mu\).
\medskip

The space \(L^0(\mu)\) becomes a complete topological vector space and topological ring if endowed with the topology
that is induced by the following distance:
\[
\sfd_{L^0(\mu)}(f,g)\coloneqq\int|f-g|\wedge 1\,\d\tilde\mu\quad\text{ for every }f,g\in L^0(\mu),
\]
where \(\tilde\mu\geq 0\) is some finite measure on \((\X,\Sigma)\) with \(\mu\ll\tilde\mu\leq\mu\).
When \(\mu\) is finite, we take \(\tilde\mu=\mu\).
\medskip

We introduce a notion of disjoint measure-valued map, which we will use throughout the paper:
\begin{definition}[Measure-valued map]\label{def:measure-valuedmap}
Let \((Q,\mathcal Q,\mathfrak q)\) be a measure space and  \((\X,\Sigma)\) a measurable space. Let \(\mu_\star=\{\mu_q\}_{q\in Q}\) be a collection of measures on \((\X,\Sigma)\).
Then we say that \(q\mapsto\mu_q\) is a \textbf{(measurable) measure-valued map} from \(Q\) to \(\X\) provided the following conditions hold:
\begin{itemize}
\item[\(\rm i)\)] The function \(Q\ni q\mapsto\mu_q(E)\in[0,+\infty]\) is measurable for every \(E\in\Sigma\).
\item[\(\rm ii)\)] The measure \(\mu\coloneqq\int\mu_q\,\d\mathfrak q(q)\colon\Sigma\to[0,+\infty]\) is \(\sigma\)-finite, where we define
\[
\bigg(\int\mu_q\,\d\mathfrak q(q)\bigg)(E)\coloneqq\int\mu_q(E)\,\d\mathfrak q(q)\quad\text{ for every }E\in\Sigma.
\]
\end{itemize}
We say that \(q\mapsto\mu_q\) is a \textbf{disjoint} measure-valued map if there exists a family \(\{E_q\}_{q\in Q}\subseteq\Sigma\)
of pairwise disjoint sets such that \(\mu_q(\X\setminus E_q)=0\) for every \(q\in Q\) and \(\bigcup_{q\in A}E_q\in\Sigma\) for every \(A\in\mathcal Q\).
\end{definition}

If \(f\colon\X\to[0,+\infty]\) is measurable, then \(Q\ni q\mapsto\int f\,\d\mu_q\in[0,+\infty]\) is measurable and
\[
\int f\,\d\mu=\int\bigg(\int f\,\d\mu_q\bigg)\,\d\mathfrak q(q).
\]
The verification of this claim can be obtained by using the monotone convergence theorem.
\begin{example}\label{ex:trivial_disint}{\rm
Let \((Q,\mathcal Q,\mathfrak q)\) be a measure space. Let us also consider the one-point probability space
\((\{{\sf o}\},\delta_{\sf o})\). Then \(q\mapsto\delta_{(q,{\sf o})}\) is a disjoint measure-valued map
from \(Q\) to \(Q\times\{{\sf o}\}\) and
\[
\mathfrak q\otimes\delta_{\sf o}=\int\delta_{(q,{\sf o})}\,\d\mathfrak q(q).
\]
Notice also that the measure \(\mathfrak q\otimes\delta_{\sf o}\) can be canonically identified with \(\mathfrak q\).
\fr}\end{example}
\begin{remark}{\rm
Given any outer measure \(\nu\) on \(\X\), the following implication holds:
\begin{equation}
\mu_q\ll\nu\;\text{ for }\mathfrak q\text{-a.e.\ }q\in Q\quad\Longrightarrow\quad\int\mu_q\,\d\mathfrak q(q)\ll\nu.
\end{equation}
Indeed, if \(N\in\Sigma\) satisfies \(\nu(N)=0\), then trivially \(\big(\int\mu_q\,\d\mathfrak q(q)\big)(N)=\int\mu_q(N)\,\d\mathfrak q(q)=0\).
\fr}\end{remark}
\begin{definition}\label{def:L0(q)}
Let \((Q,\mathcal Q,\mathfrak q)\) be a measure space, \((\X,\Sigma)\) a measurable space, \(q\mapsto\mu_q\) a disjoint measure-valued
map from \(Q\) to \(\X\). Then we define \(L^0(\mathfrak q;L^0(\mu_\star))\coloneqq\pi_{\mathfrak q}\big(\mathcal L^0(\mathfrak q;L^0(\mu_\star))\big)\), where
\[
\mathcal L^0(\mathfrak q;L^0(\mu_\star))\coloneqq\Big\{f\in\mathscr S(L^0(\mu_\star))\;\Big|\;\exists F\in\mathcal L^0(\Sigma):\,[F]_{\mu_q}=f_q\,\text{ for }\mathfrak q\text{-a.e.\ }q\in Q\Big\}.
\]
\end{definition}

We have that, letting \(\mu\coloneqq\int\mu_q\,\d\mathfrak q(q)\) for brevity, the following identification is in force:
\[
L^0(\mathfrak q;L^0(\mu_\star))\cong L^0(\mu).
\]
The following result makes this assertion precise:
\begin{proposition}
Let \((Q,\mathcal Q,\mathfrak q)\) be a measure space, \((\X,\Sigma)\) a measurable space, \(q\mapsto\mu_q\) a disjoint measure-valued map from \(Q\) to \(\X\).
Let us define the mapping \(\bar{\sf i}_{\mathfrak q}\colon\mathcal L^0(\Sigma)\to\mathcal L^0(\mathfrak q;L^0(\mu_\star))\) as
\[
\bar{\sf i}_{\mathfrak q}(\bar F)_q\coloneqq[\bar F]_{\mu_q}\quad\text{ for every }\bar F\in\mathcal L^0(\Sigma)\text{ and }q\in Q.
\]
Moreover, letting \(\mu\coloneqq\int\mu_q\,\d\mathfrak q(q)\), we define \({\sf i}_{\mathfrak q}\colon L^0(\mu)\to L^0(\mathfrak q;L^0(\mu_\star))\) as
\[
{\sf i}_{\mathfrak q}(F)\coloneqq\pi_{\mathfrak q}\big(\bar{\sf i}_{\mathfrak q}(\bar F)\big)
\quad\text{ for every }F\in L^0(\mu)\text{ and }\bar F\in\mathcal L^0(\Sigma)\text{ such that }F=[\bar F]_\mu.
\]
Then \({\sf i}_{\mathfrak q}\) is a linear and ring isomorphism. We denote by \({\sf j}_{\mathfrak q}\colon L^0(\mathfrak q;L^0(\mu_\star))\to L^0(\mu)\) its inverse.
\end{proposition}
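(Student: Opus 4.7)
The plan is to verify in turn (a) that $\bar{\sf i}_{\mathfrak q}$ is a well-defined linear ring homomorphism with image actually contained in $\mathcal L^0(\mathfrak q;L^0(\mu_\star))$; (b) that ${\sf i}_{\mathfrak q}$ descends to a well-defined map on the quotient $L^0(\mu)$; (c) bijectivity together with linearity and multiplicativity of the quotient map. The whole argument will hinge on a single fact: the identity $\mu(N)=\int\mu_q(N)\,\d\mathfrak q(q)$ from Definition \ref{def:measure-valuedmap} ii) forces a $\mu$-null set to correspond to a $\mathfrak q$-a.e.\ family of $\mu_q$-null sets, and vice versa.

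For (a), given $\bar F\in\mathcal L^0(\Sigma)$, the function $\bar F$ itself is the witness certifying that $\bar{\sf i}_{\mathfrak q}(\bar F)\in\mathcal L^0(\mathfrak q;L^0(\mu_\star))$, since $[\bar F]_{\mu_q}=\bar{\sf i}_{\mathfrak q}(\bar F)_q$ holds for every $q\in Q$ by construction. Linearity and the ring property of $\bar{\sf i}_{\mathfrak q}$ are immediate from the componentwise definition of the operations on $\mathscr S(L^0(\mu_\star))$, combined with the fact that each canonical projection $\pi_{\mu_q}\colon\mathcal L^0(\Sigma)\to L^0(\mu_q)$ is itself a linear ring homomorphism.

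For (b) and for injectivity in (c), I would take $\bar F_1,\bar F_2\in\mathcal L^0(\Sigma)$ with $[\bar F_1]_\mu=[\bar F_2]_\mu$, set $N\coloneqq\{\bar F_1\neq\bar F_2\}\in\Sigma$, and observe that $\mu(N)=0$ combined with the integration formula forces $\mu_q(N)=0$ for $\mathfrak q$-a.e.\ $q\in Q$. Hence $\bar{\sf i}_{\mathfrak q}(\bar F_1)_q=\bar{\sf i}_{\mathfrak q}(\bar F_2)_q$ for $\mathfrak q$-a.e.\ $q$, so that after applying $\pi_{\mathfrak q}$ we obtain the same element of $L^0(\mathfrak q;L^0(\mu_\star))$; this gives both well-definedness on $L^0(\mu)$ and, by reading the same chain of implications in reverse, the fact that ${\sf i}_{\mathfrak q}(F)=0$ forces $\mu(\{\bar F\neq 0\})=\int\mu_q(\{\bar F\neq 0\})\,\d\mathfrak q(q)=0$, i.e.\ $F=0$. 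Surjectivity is essentially built into the definition: any class in $L^0(\mathfrak q;L^0(\mu_\star))$ is represented by some $g\in\mathcal L^0(\mathfrak q;L^0(\mu_\star))$, which by definition admits a witness $F\in\mathcal L^0(\Sigma)$ with $[F]_{\mu_q}=g_q$ for $\mathfrak q$-a.e.\ $q$, and then ${\sf i}_{\mathfrak q}([F]_\mu)$ equals the given class. Finally, linearity and multiplicativity of ${\sf i}_{\mathfrak q}$ descend from the corresponding properties of $\bar{\sf i}_{\mathfrak q}$ through the ring quotient map $\pi_{\mathfrak q}$.

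I do not foresee any genuine obstacle. The only subtle point is the bookkeeping between the two distinct quotient equivalence relations (modulo $\mu$ on the source side and modulo $\mathfrak q$-a.e.\ pointwise equality of $L^0(\mu_q)$-valued sections on the target side), and this bookkeeping is completely controlled by the one identity $\mu(\,\cdot\,)=\int\mu_q(\,\cdot\,)\,\d\mathfrak q(q)$. Notice that disjointness of the measure-valued map plays no role in the argument; it is implicitly used only insofar as it determines the ambient setting of Definition \ref{def:L0(q)}.
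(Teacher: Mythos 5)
Your proof is correct and follows essentially the same route as the paper's: well-definedness and injectivity are both controlled by the single identity \(\mu(N)=\int\mu_q(N)\,\d\mathfrak q(q)\) (equivalently, \(\mu(N)=0\) iff \(\mu_q(N)=0\) for \(\mathfrak q\)-a.e.\ \(q\)), the homomorphism properties descend componentwise through \([\cdot]_\mu\), \([\cdot]_{\mu_q}\), and \(\pi_{\mathfrak q}\), and surjectivity comes directly from the witness \(\bar F\) built into the definition of \(\mathcal L^0(\mathfrak q;L^0(\mu_\star))\). The only difference is that you spell out injectivity explicitly (and remark that disjointness is not used), whereas the paper checks well-definedness and surjectivity and leaves injectivity implicit.
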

\begin{proof}
The map \({\sf i}_{\mathfrak q}\) is well-defined. Indeed, let $\bar{F}_1,\bar{F}_2 \in \mathcal{L}^0(\Sigma)$ be such that \([\bar{F}_1]_\mu=[\bar{F}_2]_\mu=F\). In particular, \( [\bar{F}_1]_{\mu_q}=[\bar{F}_2]_{\mu_q} \) for $\mathfrak{q}$-a.e.\ $q$. Hence, \(\pi_{\mathfrak q}\big(\bar{\sf i}_{\mathfrak q}(\bar F_1)\big)=\pi_{\mathfrak q}\big(\bar{\sf i}_{\mathfrak q}(\bar F_2)\big)\).
The fact of being a linear and ring homomorphism follows directly from similar properties
for \([\cdot]_\mu\),\([\cdot]_{\mu_q}\), \(\pi_{\mathfrak q}\).
We check surjectivity. Let \(f \in L^0(\mathfrak q;L^0(\mu_\star))\) and let \(\bar{f} \in \mathcal{L}^0(\mathfrak q;L^0(\mu_\star)) \) be such that \(f =\pi_{\mathfrak q}(\bar{f})\). Hence, there exists \(\bar{F} \in \mathcal{L}^0(\Sigma)\) such that \([\bar{F}]_{\mu_q}=\bar{f}_q\) for $\mathfrak{q}$-a.e.\ $q\in Q$.
Define $F:=[\bar{F}]_{\mu}$. Then, it can be readily checked that \({\sf i}_{\mathfrak q}(F)=f\).
\end{proof}
\begin{lemma}
Let \((Q,\mathcal Q,\mathfrak q)\) be a measure space and \((\X,\Sigma)\) a measurable space. Let \(q\mapsto\mu_q\) be a disjoint measure-valued
map from \(Q\) to \(\X\). Let \((F_n)_{n\in\N}\subseteq\mathcal L^0(\Sigma)\) and \(F\in\mathcal L^0(\Sigma)\) be given, where we denote
\(\mu\coloneqq\int\mu_q\,\d\mathfrak q(q)\). Then the following conditions are equivalent:
\begin{itemize}
\item[\(\rm i)\)] \(F(x)=\lim_n F_n(x)\) for \(\mu\)-a.e.\ \(x\in\X\).
\item[\(\rm ii)\)] For \(\mathfrak q\)-a.e.\ \(q\in Q\), it holds that \(F(x)=\lim_n F_n(x)\) for \(\mu_q\)-a.e.\ \(x\in\X\).
\end{itemize}
\end{lemma}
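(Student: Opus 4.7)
The plan is to reduce the statement to a single null-set argument by packaging the almost-everywhere convergence into one measurable set and then applying the integral representation of $\mu$ to its indicator.

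First I would introduce the exceptional set
\[
N\coloneqq\bigl\{x\in\X\,:\,F_n(x)\not\to F(x)\bigr\}
=\bigcup_{k\in\N}\bigcap_{m\in\N}\bigcup_{n\geq m}\bigl\{x\in\X\,:\,|F_n(x)-F(x)|\geq 1/k\bigr\}
\]
and observe that each of the basic sets $\{|F_n-F|\geq 1/k\}$ belongs to $\Sigma$ since $F_n,F\in\mathcal L^0(\Sigma)$, whence $N\in\Sigma$. Condition (i) is then equivalent to $\mu(N)=0$, while condition (ii) is equivalent to the statement that $\mu_q(N)=0$ for $\mathfrak q$-a.e.\ $q\in Q$.

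Next I would apply the integration identity recalled immediately after Definition \ref{def:measure-valuedmap}, namely that for every non-negative measurable function $f\colon\X\to[0,+\infty]$ one has $\int f\,\d\mu=\int\big(\int f\,\d\mu_q\big)\,\d\mathfrak q(q)$. Taking $f=\1_N$ yields
\[
\mu(N)=\int\mu_q(N)\,\d\mathfrak q(q).
\]
Since the integrand $q\mapsto\mu_q(N)$ is non-negative and measurable, the left-hand side vanishes if and only if $\mu_q(N)=0$ for $\mathfrak q$-a.e.\ $q\in Q$. This chain of equivalences proves (i) $\Leftrightarrow$ (ii).

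There is no real obstacle here: the only thing to be careful about is the measurability of $N$, which is handled by the countable description above, and the fact that the integration formula applies to $\1_N$ (a measurable non-negative function). Disjointness of the measure-valued map is not needed for this lemma; only the $\sigma$-finiteness of $\mu$ implicit in Definition \ref{def:measure-valuedmap} is used, and that is automatic.
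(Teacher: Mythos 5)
Your argument is correct and is essentially the same as the paper's: both reduce to the single null set $N$ where convergence fails and invoke the identity $\mu(N)=\int\mu_q(N)\,\d\mathfrak q(q)$ to translate $\mu(N)=0$ into $\mu_q(N)=0$ for $\mathfrak q$-a.e.\ $q$. You merely spell out the measurability of $N$ a bit more explicitly, and your side remark that disjointness of the measure-valued map is not actually used is accurate.
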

\begin{proof}
First of all, we point out that for any given set \(N\in\Sigma\) it holds that
\[
\mu(N)=0\quad\Longleftrightarrow\quad\mu_q(N)=0\,\text{ for }\mathfrak q\text{-a.e.\ }q\in Q.
\]
The validity of this claim immediately follows from the identity \(\mu(N)=\int\mu_q(N)\,\d\mathfrak q(q)\). In order to conclude,
choose as \(N\) the set of all \(x\in\X\) such that \((F_n(x))_{n\in\N}\) does not converge to \(F(x)\).
\end{proof}
\begin{remark}\label{rmk:wlog_mu_q_fin}{\rm
Let \((Q,\mathcal Q,\mathfrak q)\) be a measure space, \((\X,\Sigma)\) a measurable space, and \(q\mapsto\mu_q\)
a disjoint measure-valued map from \(Q\) to \(\X\). Fix a measurable function \(\phi\colon Q\to(0,1)\) such that
\(\int\phi\,\d\mathfrak q<+\infty\) and define \(\tilde{\mathfrak q}\coloneqq\phi\,\mathfrak q\). Next, fix any
\(\rho\colon\X\to(0,1)\) measurable such that \(\int\!\!\int\rho\,\d\mu_q\,\d\tilde{\mathfrak q}(q)<+\infty\). Therefore, letting \(\tilde\mu_q\coloneqq\rho\mu_q\), we have
that \(q\mapsto\tilde\mu_q\) is a disjoint measure-valued map and set \(\tilde\mu\coloneqq\int\tilde\mu_q\,\d\tilde{\mathfrak q}(q)\).
Since (almost) all the measures we marked with a tilde are finite, and we have
\[
\mathfrak q\ll\tilde{\mathfrak q}\leq\mathfrak q,\quad\mu\ll\tilde\mu\leq\mu,\quad\mu_q\ll\tilde\mu_q\leq\mu_q
\]
by construction, it will not be restrictive to assume that the measures \(\mathfrak q\), \(\mu\), \(\mu_q\) are finite.
\fr}\end{remark}
\subsection{The theory of Banach modules}
By an algebra we mean an associative, commutative algebra over the real field \(\R\) that is not necessarily unital.
In particular, an algebra is both a vector space and a ring (possibly without a multiplicative identity). For example,
given any measure space \((\X,\Sigma,\mu)\), the space \(L^0(\mu)\) is a unital algebra whose multiplicative identity
is \(\1_\X^\mu\).
\medskip

Observe that if \((\X,\Sigma,\mu)\) is a measure space and \(\mathscr A\) is a subalgebra of \(L^0(\mu)\), then the space
\[
\mathcal G(\mathscr A)\coloneqq\bigg\{\sum_{n\in\N}\1_{E_n}^\mu f_n\;\bigg|\;(E_n)_{n\in\N}\subseteq\Sigma\text{ partition of }\X,\,(f_n)_{n\in\N}\subseteq\mathscr A\bigg\}
\]
is a subalgebra of \(L^0(\mu)\). Moreover, the space \(\hat{\mathscr A}\) is a subalgebra of
\(L^0(\mu)\) as well, where we set
\[
\hat{\mathscr A}\coloneqq{\rm cl}_{L^0(\mu)}\big(\mathcal G(\mathscr A)\big).
\]
Notice that it can well happen that a subalgebra of \(L^0(\mu)\) has a multiplicative identity that differs from that of
\(L^0(\mu)\). For example, if \(E\in\Sigma\) is chosen so that \(\mm(E),\mm(\X\setminus E)>0\), then \(L^0(\mu|_E)\)
is a subalgebra of \(L^0(\mu)\) having multiplicative identity \(\1_E^\mu\neq\1_\X^\mu\).
\begin{remark}\label{rmk:about_hat_A}{\rm
Given any subalgebra \(\mathscr A\) of \(L^0(\mu)\), one can easily check that
\[
\hat{\mathscr A}\cong L^0(\mu|_{[\mathscr A]}),
\]
where \([\mathscr A]\in\Sigma\) is (\(\mu\)-a.e.) defined as the essential union of \(\{f>0\}\) as \(f\) varies in \(\mathscr A\).
\fr}\end{remark}
\begin{example}{\rm
The field \(\R\) is canonically isomorphic to the subalgebra \(\R_\mu\) of \(L^0(\mu)\) given by
\[
\R_\mu\coloneqq\big\{\lambda\1_\X^\mu\in L^0(\mu)\;\big|\;\lambda\in\R\big\}.
\]
Observe also that \(\hat\R_\mu=L^0(\mu)\) thanks to Remark \ref{rmk:about_hat_A}.
\fr}\end{example}

Next, let us introduce a notion of normed module that generalises \cite[Definition 2.6]{Gigli17}:
\begin{definition}[Normed module]\label{def:normed_module}
Let \((\X,\Sigma,\mu)\) be a measure space. Let \(\mathscr A\subseteq L^0(\mu)\) be a subalgebra.
Let \(\mathscr M\) be both a module over \(\mathscr A\) and a vector space (with the same addition operator).
Then we say that a map \(|\cdot|\colon\mathscr M\to L^0(\mu)\) is a \textbf{pointwise norm} if
\[\begin{split}
|v|\geq 0&\quad\text{ for every }v\in\mathscr M,\text{ with equality if and only if }v=0,\\
|v+w|\leq|v|+|w|&\quad\text{ for every }v,w\in\mathscr M,\\
|\lambda v|=|\lambda||v|&\quad\text{ for every }\lambda\in\R\text{ and }v\in\mathscr M,\\
|f\cdot v|=|f||v|&\quad\text{ for every }f\in\mathscr A\text{ and }v\in\mathscr M,\\
(\lambda f)\cdot v=\lambda(f\cdot v)&\quad\text{ for every }\lambda\in\R,\,f\in\mathscr A,\text{ and }v\in\mathscr M,
\end{split}\]
where all equalities and inequalities are intended in the \(\mu\)-a.e.\ sense. Whenever \(\mathscr M\)
is endowed with a pointwise norm \(|\cdot|\), we say that \(\mathscr M\) is an \textbf{\(L^0(\mu)\)-normed \(\mathscr A\)-module}.
\end{definition}

Notice that if \(\1_\X^\mu\in\mathscr A\), then \(\1_\X^\mu\cdot v=v\) for every \(v\in\mathscr M\),
whence it follows that
\[
\lambda v=(\lambda\1_\X^\mu)\cdot v\quad\text{ for every }\lambda\in\R\text{ and }v\in\mathscr M.
\]
Each pointwise norm \(|\cdot|\colon\mathscr M\to L^0(\mu)\) induces a distance \(\sfd_{\mathscr M}\) on \(\mathscr M\), in the following way:
\begin{equation}\label{eq:def_dist_Anmod}
\sfd_{\mathscr M}(v,w)\coloneqq\sfd_{L^0(\mu)}(|v-w|,0)\quad\text{ for every }v,w\in\mathscr M,
\end{equation}
\begin{definition}[Banach module]\label{def:Banach_module}
Let \((\X,\Sigma,\mu)\) be a measure space and \(\mathscr A\subseteq L^0(\mu)\) a subalgebra.
Then an \(L^0(\mu)\)-normed \(\mathscr A\)-module \(\mathscr M\) is called an \textbf{\(L^0(\mu)\)-Banach \(\mathscr A\)-module}
if \(\sfd_{\mathscr M}\) is complete.
\end{definition}

Notice that the \(L^0(\mu)\)-Banach \(\mathscr A\)-modules with \(\mathscr A=L^0(\mu)\) are exactly the
\(L^0\)-normed modules in the sense of \cite[Definition 2.6]{Gigli17} (indeed, in \cite{Gigli17}
normed modules are assumed to be complete).
\begin{definition}[Generators]
\label{def:generators}
Let \((\X,\Sigma,\mu)\) be a measure space and \(\mathscr M\) an \(L^0(\mu)\)-Banach \(L^0(\mu)\)-module.
Then a vector subspace \(V\subseteq\mathscr M\) \textbf{generates} \(\mathscr M\) if \(\mathcal G(V)\) is
dense in \(\mathscr M\), where we set
\begin{equation}\label{eq:def_G(V)}
\mathcal G(V)\coloneqq\bigg\{\sum_{n\in\N}\1_{E_n}^\mu\cdot v_n\;\bigg|\;(E_n)_{n\in\N}\subseteq\Sigma\text{ partition of }\X,
\,(v_n)_{n\in\N}\subseteq V\bigg\}.
\end{equation}
Moreover, we say that \(\mathscr M\) is \textbf{countably generated} if a countable vector subspace generates \(\mathscr M\). 
\end{definition}
\begin{example}\label{ex:Ban_as_mod}{\rm
Let \((\{{\sf o}\},\delta_{\sf o})\) be the one-point probability space. Then we have that
\[
\big\{L^0(\delta_{\sf o})\text{-Banach }L^0(\delta_{\sf o})\text{-modules}\big\}=\{\text{Banach spaces}\},
\]
in the sense which we are going to explain. First, observe that \(L^0(\delta_{\sf o})=\R_{\delta_{\sf o}}\)
can be canonically identified with \(\R\). Given an \(L^0(\delta_{\sf o})\)-Banach \(L^0(\delta_{\sf o})\)-module
\(\mathscr M\), we thus have that the pointwise norm
\(\|\cdot\|\coloneqq|\cdot|\colon\mathscr M\to L^0(\delta_{\sf o})\cong\R\) can be regarded
as a norm. The resulting normed space \(\mathscr M\) is in fact Banach, as follows from the completeness
of the distance \(\sfd_{\mathscr M}(v,w)=\|v-w\|\wedge 1\).
\fr}\end{example}
\begin{proposition}[Completion of a normed module]
Let \((\X,\Sigma,\mu)\) be a measure space and \(\mathscr A\) a subalgebra of \(L^0(\mu)\) such that \(\hat{\mathscr A}=L^0(\mu)\).
Let \(\mathscr M\) be an \(L^0(\mu)\)-normed \(\mathscr A\)-module. Then there exists a unique couple
\((\hat{\mathscr M},\iota)\) having the following properties:
\begin{itemize}
\item[\(\rm i)\)] \(\hat{\mathscr M}\) is an \(L^0(\mu)\)-Banach \(L^0(\mu)\)-module,
\item[\(\rm ii)\)] \(\iota\colon\mathscr M\to\hat{\mathscr M}\) is a linear map with generating image
such that \(|\iota(v)|=|v|\) for every \(v\in\mathscr M\).
\end{itemize}
The couple \((\hat{\mathscr M},\iota)\) is unique up to a unique isomorphism: given any couple
\((\mathscr N,\tilde\iota)\) with the same properties, there is a unique isomorphism \(\Phi\colon\hat{\mathscr M}\to\mathscr N\)
of \(L^0(\mu)\)-Banach \(L^0(\mu)\)-modules such that
\[\begin{tikzcd}
\mathscr M \arrow[r,"\iota"] \arrow[rd,swap,"\tilde\iota"] & \hat{\mathscr M} \arrow[d,"\Phi"] \\
& \mathscr N
\end{tikzcd}\]
is a commutative diagram. We say that \(\hat{\mathscr M}\) is the \textbf{completion} of \(\mathscr M\).
\end{proposition}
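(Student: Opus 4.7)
The plan is to handle uniqueness first and then construct the completion explicitly. For uniqueness, given two candidate pairs $(\hat{\mathscr M},\iota)$ and $(\mathscr N,\tilde\iota)$, I would define $\Phi$ on the generating image $\iota(\mathscr M)$ by the forced formula $\Phi(\iota(v)):=\tilde\iota(v)$ (which is well-defined since $\iota$ preserves pointwise norms, so $\iota(v)=0$ forces $v=0$ and thus $\tilde\iota(v)=0$) and then extend to $\mathcal G(\iota(\mathscr M))$ by the only $L^0(\mu)$-linear rule consistent with the partition structure: for $(E_n)\subseteq\Sigma$ a partition of $\X$ and $v_n\in\mathscr M$, set $\Phi\bigl(\sum_n\1_{E_n}^\mu\cdot\iota(v_n)\bigr):=\sum_n\1_{E_n}^\mu\cdot\tilde\iota(v_n)$. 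Well-definedness and the pointwise-norm identity $|\Phi(w)|=|w|$ follow from $|\iota(v)|=|v|=|\tilde\iota(v)|$ after passing to a common refinement of two representations. Hence $\Phi$ is an isometry on a dense subset of $\hat{\mathscr M}$ and extends uniquely and continuously to the full completion; by symmetry it admits an inverse of the same form, giving the desired unique isomorphism.

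For existence I would build the completion in three stages. First, define $\mathscr M^\sharp$ to consist of formal partitioned sums $\sum_n\1_{E_n}^\mu\cdot v_n$, with $(E_n)_{n\in\N}\subseteq\Sigma$ a measurable partition of $\X$ and $v_n\in\mathscr M$, modulo the equivalence relation that identifies two sums whenever, on any common refinement $(G_k)$ into which both partitions split, the corresponding elements $v$ and $w$ satisfy $\1_{G_k}^\mu|v-w|=0$ in $L^0(\mu)$. The pointwise norm $|\sum_n\1_{E_n}^\mu\cdot v_n|:=\sum_n\1_{E_n}^\mu|v_n|\in L^0(\mu)$, the natural addition, and the $\mathcal G(\mathscr A)$-action defined via common refinement make $\mathscr M^\sharp$ into an $L^0(\mu)$-normed $\mathcal G(\mathscr A)$-module, and the map $v\mapsto\1_\X^\mu\cdot v$ embeds $\mathscr M$ isometrically. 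Second, let $\hat{\mathscr M}$ be the metric completion of $\mathscr M^\sharp$ with respect to $\sfd_{\mathscr M^\sharp}$, and extend the vector space structure, the pointwise norm (using the reverse-triangle inequality $||v_n|-|v_m||\leq|v_n-v_m|$, which shows that $|\cdot|$ is a contraction into $L^0(\mu)$), and the $\mathcal G(\mathscr A)$-action to $\hat{\mathscr M}$ by continuity. Third, upgrade the $\mathcal G(\mathscr A)$-action to a full $L^0(\mu)$-action: for $g\in L^0(\mu)$, the hypothesis $\hat{\mathscr A}=L^0(\mu)$ yields $g_k\in\mathcal G(\mathscr A)$ with $g_k\to g$ in $L^0(\mu)$, and for $v\in\hat{\mathscr M}$ the identity $|g_k\cdot v-g_l\cdot v|=|g_k-g_l||v|$ together with continuity of multiplication in $L^0(\mu)$ shows that $(g_k\cdot v)$ is Cauchy, so we set $g\cdot v:=\lim_k g_k\cdot v$.

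The required map $\iota\colon\mathscr M\to\hat{\mathscr M}$ is then the composition of the embedding $\mathscr M\hookrightarrow\mathscr M^\sharp$ with the inclusion $\mathscr M^\sharp\hookrightarrow\hat{\mathscr M}$, and $\iota(\mathscr M)$ generates $\hat{\mathscr M}$ because $\mathcal G(\iota(\mathscr M))$ is dense in $\mathscr M^\sharp$ by construction and $\mathscr M^\sharp$ is dense in $\hat{\mathscr M}$. The main obstacle will be the third step: verifying that the proposed $L^0(\mu)$-action is independent of the approximating sequence, jointly continuous in both variables, and satisfies all the module axioms, in particular $|g\cdot v|=|g||v|$, associativity, distributivity, and $\1_\X^\mu\cdot v=v$. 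Each of these follows by passing to the limit in the corresponding identity valid at the $\mathcal G(\mathscr A)$-level, but the argument depends crucially on the hypothesis $\hat{\mathscr A}=L^0(\mu)$, which is what supplies both the approximating sequences and the uniqueness (modulo $\mu$-null behaviour) of the resulting limits.
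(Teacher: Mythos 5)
Your proof takes a genuinely different route from the paper's: the paper disposes of this proposition by citing abstract completion machinery from \cite{LP23} (recognising $\big(L^0(\mu),L^0(\mu),L^0(\mu)\big)$ as a metric $f$-structure and applying Theorem 3.19 of that reference to $(\mathscr V,\psi)=(\mathscr M,|\cdot|)$), whereas you carry out a self-contained explicit construction. Your uniqueness argument is correct, and the three-stage skeleton for existence (pass to partitioned sums $\mathscr M^\sharp$, take the metric completion, enlarge the scalar ring from $\mathcal G(\mathscr A)$ to $L^0(\mu)$ by density) is the right one.

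There is, however, a genuine gap in the third stage, precisely at the axiom $\1_\X^\mu\cdot v=v$. You claim it follows ``by passing to the limit in the corresponding identity valid at the $\mathcal G(\mathscr A)$-level,'' but no such identity exists when $\1_\X^\mu\notin\mathcal G(\mathscr A)$ (which can happen: take $\X=[0,1]$ with Lebesgue measure and $\mathscr A$ the non-unital algebra generated by $x\mapsto x$; then $\hat{\mathscr A}=L^0(\mu)$, yet no element of $\mathcal G(\mathscr A)$ is $\mu$-a.e.\ equal to $1$ on any set of positive measure). Your construction declares $\1_\X^\mu\cdot v=\lim_k g_k\cdot v$ for some $g_k\in\mathcal G(\mathscr A)$ with $g_k\to\1_\X^\mu$, so you must show $|g_k\cdot v-v|\to 0$ in $L^0(\mu)$; but the pointwise norm axioms by themselves only give the lower bound $\big||g_k|-1\big|\,|v|\leq|g_k\cdot v-v|$ and the crude upper bound $(|g_k|+1)|v|$, since $v$ itself is not of the form ``identity times $v$'' inside the non-unital $\mathscr A$-module. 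The missing ingredient is the algebraic observation that for every $f,g\in\mathscr A$ and $v\in\mathscr M$ one has $g\cdot(f\cdot v-v)=(gf-g)\cdot v$, hence $|g|\,|f\cdot v-v|=|gf-g|\,|v|=|g|\,|f-1|\,|v|$; since $\hat{\mathscr A}=L^0(\mu)$ is equivalent to $[\mathscr A]=\X$ up to a $\mu$-null set (Remark \ref{rmk:about_hat_A}), the sets $\{g\neq 0\}$ with $g\in\mathscr A$ essentially cover $\X$, which forces $|f\cdot v-v|=|f-1|\,|v|$ $\mu$-a.e. This identity then propagates to $\mathcal G(\mathscr A)$ and to $\mathscr M^\sharp$ by refinement, and finally gives $|g_k\cdot v-v|=|g_k-1|\,|v|\to 0$. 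This is the decisive use of the hypothesis $\hat{\mathscr A}=L^0(\mu)$: not merely that it supplies approximating sequences, but that it guarantees the elements of $\mathscr A$ have no common $\mu$-essential zero set.
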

\begin{proof}
Recall Remark \ref{rmk:about_hat_A}. The triple
\(\big(L^0(\mu),L^0(\mu),L^0(\mu)\big)\) is a metric \(f\)-structure in the sense of \cite[Definition 2.24]{LP23}; cf.\ with \cite[Section 4.2.2]{LP23}.
Defining \(\mathscr V\coloneqq\mathscr M\) and the map \(\psi\colon\mathscr V\to L^0(\mu)^+\) as \(\psi(v)\coloneqq|v|\), we are in a position to apply
\cite[Theorem 3.19]{LP23} to \((\mathscr V,\psi)\). Then
\((\hat{\mathscr M},\iota)\coloneqq(\mathscr M_{\langle\psi\rangle},T_{\langle\psi\rangle})\) verifies the statement.
\end{proof}
\subsubsection{Banach modules with respect to a submodular outer measure}
Let \((\X,\sfd)\) be a metric space. We denote by \(\mathscr B(\X)\) the Borel \(\sigma\)-algebra of \(\X\).
Fix an outer measure \(\nu\) on \(\X\). Given a Borel function \(f\colon\X\to[0,+\infty]\) and a set
\(E\in\mathscr B(\X)\), one can define the integral of \(f\) on \(E\) with respect to \(\nu\) via Cavalieri's formula, i.e.
\begin{equation}\label{eq:def_int_outer}
\int_E f\,\d\nu\coloneqq\int_0^{+\infty}\nu(\{\1_E f\geq t\})\,\d t.
\end{equation}
Moreover, we say that:
\begin{itemize}
\item \(\nu\) is \textbf{boundedly-finite} if \(\nu(B)<+\infty\) for every \(B\in\mathscr B(\X)\) bounded.
\item \(\nu\) is \textbf{submodular} if \(\nu(E\cup F)+\nu(E\cap F)\leq\nu(E)+\nu(F)\) for every \(E,F\in\mathscr B(\X)\).
\end{itemize}
It holds that the integral defined in \eqref{eq:def_int_outer} is subadditive, which means that
\[
\int_\X (f+g)\,\d\nu\leq\int_\X f\,\,\d\nu+\int_\X g\,\d\nu\quad\text{ for every }f,g\colon\X\to[0,+\infty]\text{ Borel,}
\]
if and only if \(\nu\) is submodular. See \cite{debin2019quasicontinuous} after \cite{denneberg2010non}.
\begin{example}\label{ex:submod_outer}{\rm
We are concerned with two types of boundedly-finite, submodular outer measures:
\begin{itemize}
\item[\(\rm i)\)] The outer measure \(\bar\mu\) induced via the Carath\'{e}odory construction, i.e.
\[
\bar\mu(S)\coloneqq\inf\big\{\mu(E)\;\big|\;E\in\mathscr B(\X),\,S\subseteq E\big\}\quad\text{ for every }S\subseteq\X,
\]
by a boundedly-finite Borel measure \(\mu\geq 0\) on a metric space \((\X,\sfd)\).
\item[\(\rm ii)\)] The Sobolev capacity \(\rm Cap\) on a metric measure space \((\X,\sfd,\mm)\).
\end{itemize}
We will introduce the Sobolev capacity in Section \ref{s:BV}.
\fr}\end{example}
Fix a metric space \((\X,\sfd)\) and a boundedly-finite, submodular outer measure \(\nu\) on \(\X\). Then we denote by \(L^0(\nu)\) the space
of all Borel functions from \(\X\) to \(\R\), quotiented up to \(\nu\)-a.e.\ equality. In order to define a distance on \(L^0(\nu)\), fix an
increasing sequence \((U_n)_{n\in\N}\) of bounded open subsets of \(\X\) with the following property: for any bounded set \(B\subseteq\X\),
there exists \(n\in\N\) such that \(B\subseteq U_n\). In particular, we have that \(\X=\bigcup_{n\in\N}U_n\). Then we define
\[
\sfd_{L^0(\nu)}(f,g)\coloneqq\sum_{n\in\N}\frac{1}{2^n(\nu(U_n)\vee 1)}\int_{U_n}|f-g|\wedge 1\,\d\nu\quad\text{ for every }f,g\in L^0(\nu).
\]
The submodularity of the integration with respect to \(\nu\) ensures that \(\sfd_{L^0(\nu)}\) is a distance. Even though the distance \(\sfd_{L^0(\nu)}\)
depends on the chosen sequence \((U_n)_n\), its induced topology does not and makes \(L^0(\nu)\) into a topological vector space. Given a Borel measure
\(\mu\) on \(\X\) such that \(\mu\ll\nu\), we denote by \(\pi_\mu\colon L^0(\nu)\to L^0(\mu)\) the canonical projection map and by \([f]_\mu\in L^0(\mu)\)
the equivalence class of a function \(f\in L^0(\nu)\). Moreover, in the framework of Example \ref{ex:submod_outer} i), we have that \(L^0(\bar\mu)=L^0(\mu)\) as topological vector spaces.
\medskip

The following definition is adapted from \cite[Definition 3.1]{debin2019quasicontinuous}:
\begin{definition}
Let \((\X,\sfd)\) be a metric space and \(\nu\) a boundedly-finite, submodular outer measure on \(\X\).
Let \(\mathscr M\) be a module over \(L^0(\nu)\). Then \(|\cdot|\colon\mathscr M\to L^0(\nu)\) is a \textbf{pointwise norm} if
\[\begin{split}
|v|\geq 0&\quad\text{ for every }v\in\mathscr M,\text{ with equality if and only if }v=0,\\
|v+w|\leq|v|+|w|&\quad\text{ for every }v,w\in\mathscr M,\\
|f\cdot v|=|f||v|&\quad\text{ for every }f\in L^0(\nu)\text{ and }v\in\mathscr M,
\end{split}\]
where all equalities and inequalities are intended in the \(\nu\)-a.e.\ sense. Whenever \(\mathscr M\)
is endowed with a pointwise norm \(|\cdot|\), we say that \(\mathscr M\) is an \textbf{\(L^0(\nu)\)-normed \(L^0(\nu)\)-module}. When the distance
\[
\sfd_{\mathscr M}(v,w)\coloneqq\sfd_{L^0(\nu)}(|v-w|,0)\quad\text{ for every }v,w\in\mathscr M
\]
is a complete distance on \(\mathscr M\), we say that \(\mathscr M\) is an \textbf{\(L^0(\nu)\)-Banach \(L^0(\nu)\)-module}.
\end{definition}

In view of Example \ref{ex:submod_outer} i), the above notion of a normed/Banach module generalises Definitions \ref{def:normed_module} and \ref{def:Banach_module} with \(\mathscr A=L^0(\mu)\).
Generalising Definition \ref{def:generators}, we say that a vector subspace \(V\subseteq\mathscr M\)
\textbf{generates} \(\mathscr M\) if \(\mathcal G(V)\) is dense in \(\mathscr M\), where \(\mathcal G(V)\) is defined
as in \eqref{eq:def_G(V)} (with \(\nu\) in place of \(\mu\)).
\medskip

In the case where \(\mu\ll\nu\) and \(\mathscr M\) is a given \(L^0(\nu)\)-Banach \(L^0(\nu)\)-module, we can `quotient \(\mathscr M\) up to \(\mu\)-a.e.\ equality', i.e.\ there is a natural
\(L^0(\mu)\)-Banach \(L^0(\mu)\)-module structure on the quotient
\begin{equation}\label{eq:def_quot_M}
\mathscr M_\mu\coloneqq\mathscr M/\sim_\mu,\quad\text{ where }v\sim_\mu w\text{ if and only if }|v-w|=0\text{ holds }\mu\text{-a.e.\ on }\X.
\end{equation}
In analogy with (and, in fact, generalising) the notation for the spaces of functions, we denote by \(\pi_\mu\colon\mathscr M\to\mathscr M_\mu\)
the canonical projection map and by \([v]_\mu\in\mathscr M_\mu\) the equivalence class of \(v\in\mathscr M\).
\subsection{Calculus on metric measure spaces}
By a \textbf{metric measure space} \((\X,\sfd,\mm)\) we mean a complete, separable metric space \((\X,\sfd)\) equipped with a boundedly finite Borel
measure \(\mm\geq 0\). We denote by \({\rm LIP}(\X)\) (resp.\ \({\rm LIP}_{loc}(\X)\), resp.\ \({\rm LIP}_{bs}(\X)\)) the space of all Lipschitz (resp.\ locally Lipschitz,
resp.\ boundedly-supported Lipschitz) functions from \(\X\) to \(\R\). The \textbf{asymptotic slope} \({\rm lip}_a(f)\colon\X\to[0,+\infty)\) of \(f\in{\rm LIP}_{loc}(\X)\) is defined as
\[
{\rm lip}_a(f)(x)\coloneqq\lims_{y,z\to x}\frac{|f(y)-f(z)|}{\sfd(y,z)}\quad\text{ for every accumulation point }x\in\X
\]
and \({\rm lip}_a(f)(x)\coloneqq 0\) for every isolated point \(x\in\X\). The \textbf{Cheeger energy} of \((\X,\sfd,\mm)\), which was introduced in \cite{AmbrosioGigliSavare11-3} after \cite{Cheeger00},
is the functional \({\rm Ch}\colon L^2(\mm)\to[0,\infty]\) given by
\[
{\rm Ch}(f)\coloneqq\frac{1}{2}\inf\bigg\{\limi_{n\to\infty}\int{\rm lip}_a^2(f_n)\,\d\mm\;\bigg|\;(f_n)_{n\in\N}\subseteq{\rm LIP}_{bs}(\X),\,\pi_\mm(f_n)\to f\text{ in }L^2(\mm)\bigg\}
\]
for every \(f\in L^2(\mm)\). The \textbf{Sobolev space} \(H^{1,2}(\X)\) is then defined as
\[
H^{1,2}(\X)\coloneqq\big\{f\in L^2(\mm)\;\big|\;{\rm Ch}(f)<+\infty\big\}.
\]
Given any \(f\in H^{1,2}(\X)\), there exists a unique non-negative function \(|Df|\in L^2(\mm)\), which is called the \textbf{minimal relaxed slope} of \(f\), providing the integral representation
\[
{\rm Ch}(f)=\frac{1}{2}\int|Df|^2\,\d\mm.
\]
It holds that \(\pi_\mm({\rm LIP}_{bs}(\X))\subseteq H^{1,2}(\X)\) and \(|Df|\leq{\rm lip}_a(f)\) in the  \(\mm\)-a.e.\ sense for all \(f\in{\rm LIP}_{bs}(\X)\).
The Sobolev space is a Banach space if endowed with the norm
\[
\|f\|_{H^{1,2}(\X)}\coloneqq\big(\|f\|_{L^2(\mm)}^2+\||Df|\|_{L^2(\mm)}^2\big)^{1/2}\quad\text{ for every }f\in H^{1,2}(\X).
\]
However, \(\big(H^{1,2}(\X),\|\cdot\|_{H^{1,2}(\X)}\big)\) needs not be a Hilbert space. Following \cite{Gigli12}, we say that
\[
(\X,\sfd,\mm)\text{ is \textbf{infinitesimally Hilbertian} if }H^{1,2}(\X)\text{ is a Hilbert space.}
\]
In the infinitesimally Hilbertian setting, the \textbf{carr\'{e} du champ} operator
\[
H^{1,2}(\X)\times H^{1,2}(\X)\ni(f,g)\mapsto\langle\nabla f,\nabla g\rangle\coloneqq\frac{|D(f+g)|^2-|Df|^2-|Dg|^2}{2}\in L^1(\mm)
\]
is an \(L^\infty(\mm)\)-bilinear operator. We say that a function \(f\in H^{1,2}(\X)\) has a \textbf{Laplacian} if there exists a (necessarily unique) function
\(\Delta f\in L^2(\mm)\) such that
\[
\int\langle\nabla f,\nabla g\rangle\,\d\mm=-\int g\,\Delta f\,\d\mm\quad\text{ for every }g\in H^{1,2}(\X).
\]
We denote by \(D(\Delta)\) the space of all those functions \(f\in H^{1,2}(\X)\) having a Laplacian.
\medskip

The following definition is essentially taken from \cite{Gigli14,Gigli17}:
\begin{definition}[Tangent module]
Let \((\X,\sfd,\mm)\) be an infinitesimally Hilbertian metric measure space. Then there exists a unique (up to a unique isomorphism) couple \((L^0(T\X),\nabla)\), where:
\begin{itemize}
\item[\(\rm i)\)] \(L^0(T\X)\) is an \(L^0(\mm)\)-Banach \(L^0(\mm)\)-module.
\item[\(\rm ii)\)] \(\nabla\colon H^{1,2}(\X)\to L^0(T\X)\) is a linear operator such that
\[\begin{split}
|\nabla f|=|Df|&\quad\text{ for every }f\in H^{1,2}(\X),\\
\{\nabla f\,:\,f\in H^{1,2}(\X)\}&\quad\text{ generates }L^0(T\X).
\end{split}\]
\end{itemize}
The space \(L^0(T\X)\) is called the \textbf{tangent module} of \((\X,\sfd,\mm)\) and \(\nabla\) the \textbf{gradient operator}.
\end{definition}

Thanks to the locality properties of the minimal relaxed slopes, the gradient operator can be extended to all Lipschitz functions on \(\X\).
\subsubsection{Functions of bounded variation on PI spaces}\label{s:BV}
We recall the notion of function of bounded variation on a metric measure space. The following definition was introduced in \cite{Ambrosio-DiMarino14} after \cite{MIRANDA2003}.
\begin{definition}[BV space]
Let \((\X,\sfd,\mm)\) be a metric measure space and \(f\in L^1_{loc}(\X)\). Given any open set \(U\subseteq\X\), we define the \textbf{total variation} of \(f\) on \(U\) as
\begin{equation}\label{eq:def_BV}
|Df|(U)\coloneqq\inf\bigg\{\limi_{n\to\infty}\int_U{\rm lip}_a(f_n)\,\d\mm\;\bigg|\;(f_n)_{n\in\N}\subseteq{\rm LIP}_{loc}(U)\cap L^1(U),\,f_n\to f\text{ in }L^1_{loc}(U)\bigg\}.
\end{equation}
Then we say that \(f\) is \textbf{of bounded variation} if \(|Df|(\X)<+\infty\). We define the space \(BV(\X)\) as
\[
BV(\X)\coloneqq\big\{f\in L^1(\mm)\;\big|\;|Df|(\X)<+\infty\big\}.
\]
Moreover, we say that \(E\subseteq\X\) Borel is a \textbf{set of finite perimeter} if \(P(E)\coloneqq|D\1_E|(\X)<+\infty\).
\end{definition}

Given any function \(f\in L^1_{loc}(\X)\) of bounded variation, there exists a unique finite Borel measure on \(\X\), still denoted by \(|Df|\), which extends the set-function
on open sets we defined in \eqref{eq:def_BV}. In the case where \(f=\1_E\) for some set of finite perimeter \(E\subseteq\X\), we write \(P(E,\cdot)\coloneqq|D\1_E|\) and we call
\(P(E,\cdot)\) the \textbf{perimeter measure} of \(E\). The following result is taken from \cite{MIRANDA2003}:
\begin{theorem}[Coarea formula]\label{thm:coarea}
Let \((\X,\sfd,\mm)\) be a metric measure space and let \(f\in BV(\X)\). Then \(t\mapsto P(\{f>t\},\cdot)\) is a measurable measure-valued map from \(\R\) to \(\X\). Moreover, it holds that
\[
|Df|=\int_\R P(\{f>t\},\cdot)\,\d t.
\]
\end{theorem}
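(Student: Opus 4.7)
The plan is to follow the classical Fleming--Rishel strategy, adapted to the metric setting via Lipschitz approximation (as in Miranda's original proof), by establishing measurability, proving one inequality by approximation from above, and the reverse by a layer-cake/truncation argument.

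First I would settle the measurability claim. For a fixed open set $U\subseteq\X$, the map $t\mapsto\1_{\{f>t\}}$ is monotone non-increasing in $L^1_{loc}(U)$, hence continuous except on a countable set of $t$'s. Since perimeter is lower semicontinuous with respect to $L^1_{loc}$-convergence (immediate from the relaxation definition via $\liminf$ of asymptotic slopes), the function $t\mapsto P(\{f>t\},U)$ is Borel measurable. To pass from open sets to arbitrary Borel sets $E\in\mathscr B(\X)$, I would use a monotone class / Dynkin argument together with the outer regularity of the finite Borel measure $P(\{f>t\},\cdot)$, so that $t\mapsto P(\{f>t\},E)$ inherits measurability.

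Next I would prove the inequality $\int_\R P(\{f>t\},U)\,\d t\leq|Df|(U)$ for every open $U$. Pick Lipschitz approximations $f_n\in{\rm LIP}_{loc}(U)\cap L^1(U)$ with $f_n\to f$ in $L^1_{loc}(U)$ and $\int_U{\rm lip}_a(f_n)\,\d\mm\to|Df|(U)$. For each Lipschitz $f_n$ one has the pointwise identity $\1_{\{f_n>t\}}-\1_{\{f_n>s\}}$ expressible via truncations $\tau_{s,t}(f_n)/(t-s)$ with slope controlled by ${\rm lip}_a(f_n)$; taking the limit in a Riemann-sum / Cavalieri fashion yields the Lipschitz coarea inequality $\int_\R P(\{f_n>t\},U)\,\d t\leq\int_U{\rm lip}_a(f_n)\,\d\mm$. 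For a.e.\ $t\in\R$ one has $\1_{\{f_n>t\}}\to\1_{\{f>t\}}$ in $L^1_{loc}(U)$ (after extracting a subsequence and excluding the countable set of atoms of the distribution function), so lower semicontinuity of perimeter gives $P(\{f>t\},U)\leq\liminf_n P(\{f_n>t\},U)$; Fatou's lemma then completes this direction.

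For the reverse inequality $|Df|(U)\leq\int_\R P(\{f>t\},U)\,\d t$, I would define $\lambda$ on open sets by $\lambda(U)\coloneqq\int_\R P(\{f>t\},U)\,\d t$ and show that $\lambda$ extends to a Borel measure whose value on open sets dominates $|Df|(U)$. The idea is to exhibit, for each $\epsilon>0$, a Lipschitz-in-$f$ approximation $f^\epsilon$ (e.g.\ piecewise linear in $f$ built from the truncations $\min(\epsilon,\max(0,f-k\epsilon))$ indexed by $k\in\Z$) such that ${\rm lip}_a(f^\epsilon)$ is controlled, via a slicing estimate, by a Riemann sum of $P(\{f>t\},\cdot)$. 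Taking $\epsilon\downarrow 0$, using the definition of $|Df|$ as the relaxation of the asymptotic slope functional together with the lower semicontinuity of total variation, yields the bound. Finally, once both inequalities are established on open sets, Borel regularity of the two finite measures $|Df|$ and $\lambda$ gives equality on all Borel sets.

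The main obstacle is the reverse inequality: producing admissible Lipschitz approximants of $f$ whose asymptotic slopes are quantitatively controlled by the perimeters of the level sets. In the Euclidean or smooth setting this is immediate from the co-area formula for Lipschitz functions, but in the bare metric setting it requires care with the relaxation procedure defining $|Df|$ and with the interchange of $\liminf_n$ and $\int_\R$, which is exactly where the lower semicontinuity of $P(\cdot,U)$ plus Fatou must be combined with a careful diagonal/truncation argument.
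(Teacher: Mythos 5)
The statement you were asked to prove is, in the paper, not proved at all: it is Miranda's coarea formula, imported verbatim with the sentence ``The following result is taken from \cite{MIRANDA2003}.'' So there is nothing in the paper to compare your argument to line by line; what is relevant is whether your sketch correctly reconstructs the classical Fleming--Rishel argument as adapted by Miranda to the metric setting, since that is what the citation delegates to.

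Your treatment of measurability and of the inequality \(\int_\R P(\{f>t\},U)\,\d t\leq|Df|(U)\) is sound and does follow Miranda's route: monotonicity of \(t\mapsto\1_{\{f>t\}}\) in \(L^1_{loc}\) plus lower semicontinuity of perimeter give Borel measurability on open sets, a Dynkin/monotone-class argument extends it to Borel sets (one should note that \(P(\{f>t\},\X)<+\infty\) is needed for the complementation step, and holds only for a.e.\ \(t\), which has to be arranged first --- this follows from the very inequality you prove next), and the Lipschitz approximation plus Fatou plus l.s.c.\ establishes the forward bound.

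The reverse inequality is where your sketch breaks down, and it is precisely the delicate half. As written, your approximant \(f^\epsilon\coloneqq\sum_{k\in\Z}\min(\epsilon,\max(0,f-k\epsilon))\) is not an approximation at all: for any \(x\) with \(f(x)\geq 0\) the telescoping sum gives \(f^\epsilon(x)=f(x)\) exactly, so nothing has been gained. Moreover \(f^\epsilon=\psi_\epsilon\circ f\) for a Lipschitz \(\psi_\epsilon\colon\R\to\R\) is in general only BV, not locally Lipschitz, so the quantity \({\rm lip}_a(f^\epsilon)\) you propose to control does not make sense; \(|Df|\) is the relaxation of \({\rm lip}_a\) along \emph{Lipschitz} approximants, and one cannot plug a BV object into \({\rm lip}_a\). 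The correct mechanism is different: one approximates \(f\) in \(L^1_{loc}\) by Riemann-type superpositions of indicators, e.g.\ \(g_\epsilon\coloneqq\sum_k\epsilon\,\1_{\{f>k\epsilon\}}\) (truncated to finite sums and with the obvious modification for negative values), uses countable subadditivity of the total variation measure to get \(|Dg_\epsilon|(U)\leq\sum_k\epsilon\,P(\{f>k\epsilon\},U)\), and then lower semicontinuity of \(|D\cdot|\) as \(\epsilon\downarrow 0\). Even then there is a genuine obstacle you did not flag: the map \(t\mapsto P(\{f>t\},U)\) is merely measurable and integrable, so its Riemann sums on a fixed grid need not converge to its integral; one must average over grid shifts \(a\in[0,\epsilon)\) (using \(\int_0^\epsilon\sum_k\epsilon\,P(\{f>a+k\epsilon\},U)\,\d a=\epsilon\int_\R P(\{f>t\},U)\,\d t\)) to select, for each \(\epsilon\), a shift for which the Riemann sum is within \(\delta\) of the integral. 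Without that device the passage from Riemann sums to the integral in the reverse inequality is unjustified. So the sketch captures the correct overall architecture, but the construction and the convergence step at the heart of the reverse inequality are both incorrect as stated.
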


Following \cite[Definition 2.6]{debin2019quasicontinuous} (which is a variant of \cite[formula (7.2.1)]{HKST15}), we define the \textbf{Sobolev capacity} on the space \((\X,\sfd,\mm)\) as
\[
{\rm Cap}(E)\coloneqq\inf_f\|f\|_{H^{1,2}(\X)}^2\quad\text{ for every set }E\subseteq\X,
\]
where the infimum is taken over all functions \(f\in H^{1,2}(\X)\) satisfying \(f\geq 1\) \(\mm\)-a.e.\ on some open neighbourhood of \(E\).
The Sobolev capacity \(\rm Cap\) is a boundedly-finite, submodular outer measure on \(\X\) such that \(\mm\ll{\rm Cap}\). Moreover, it was proved in \cite{BGBV} after \cite{bru2019rectifiability} that
\begin{equation}\label{eq:ac_Cap}
|Df|\ll{\rm Cap}\quad\text{ for every }f\in L^1_{loc}(\X)\text{ of bounded variation,}
\end{equation}
thus in particular \(P(E,\cdot)\ll{\rm Cap}\) for every set of finite perimeter \(E\subseteq\X\).
\medskip

Even though the basic theory of BV functions is meaningful on arbitrary metric measure spaces, a much more refined analysis is available in the setting of PI spaces,
which we are going to introduce. We refer e.g.\ to \cite{Bjorn-Bjorn11,HKST15} for a thorough account of this topic.
\begin{definition}[PI space]
Let \((\X,\sfd,\mm)\) be a metric measure space. Then:
\begin{itemize}
\item[\(\rm i)\)] We say that \((\X,\sfd,\mm)\) is \textbf{uniformly locally doubling} if there exists a non-decreasing function \(C_D\colon(0,+\infty)\to(0,+\infty)\) such that
\[
\mm(B_{2r}(x))\leq C_D(R)\mm(B_r(x))\quad\text{ for every }0<r<R\text{ and }x\in\X.
\]
\item[\(\rm ii)\)] We say that \((\X,\sfd,\mm)\) supports a \textbf{weak local \((1,1)\)-local Poincar\'{e} inequality} if there exist a constant \(\lambda\geq 1\) and a
non-decreasing function \(C_P\colon(0,+\infty)\to(0,+\infty)\) such that for any function \(f\in{\rm LIP}(\X)\) it holds that
\[
\fint_{B_r(x)}\bigg|f-\fint_{B_r(x)}f\,\d\mm\bigg|\,\d\mm\leq C_P(R)\,r\fint_{B_{\lambda r}(x)}{\rm lip}_a(f)\,\d\mm
\]
for every \(0<r<R\) and \(x\in\X\).
\item[\(\rm iii)\)] We say that \((\X,\sfd,\mm)\) is a \textbf{PI space} if it is uniformly locally doubling and it supports a weak local \((1,1)\)-Poincar\'{e} inequality.
\end{itemize}
\end{definition}

Let \((\X,\sfd,\mm)\) be a PI space. The \textbf{upper density} of a Borel set \(E\subseteq\X\) at a point \(x\in\X\) is
\[
\Theta^*(E,x)\coloneqq\lims_{r\searrow 0}\frac{\mm(E\cap B_r(x))}{\mm(B_r(x))}\in[0,1].
\]
The \textbf{essential boundary} of \(E\) is defined as the Borel set
\[
\partial^*E\coloneqq\big\{x\in\X\;\big|\;\Theta^*(E,x),\Theta^*(\X\setminus E,x)>0\big\}.
\]
Following \cite{Amb01}, we define the \textbf{codimension-one Hausdorff measure} of a set \(E\subseteq\X\) as
\[
\mathcal H^h(E)\coloneqq\lim_{\delta\searrow 0}\mathcal H^h_\delta(E),
\]
where for any \(\delta>0\) we define
\[
\mathcal H^h_\delta(E)\coloneqq\inf\bigg\{\sum_{n\in\N}\frac{\mm(B_{r_n}(x_n))}{2r_n}\;\bigg|\;(x_n)_n\subseteq\X,\,(r_n)_n\subseteq(0,\delta),\,E\subseteq\bigcup_{n\in\N}B_{r_n}(x_n)\bigg\}.
\]
Both \(\mathcal H^h_\delta\) and \(\mathcal H^h\) are Borel regular outer measures on \(\X\). If \(E\subseteq\X\) is a set of finite perimeter, then it was proved
in \cite{Amb01} (and \cite{ambmirpal04}) that the perimeter measure of \(E\) can be written as
\[
P(E,\cdot)=\theta_E\mathcal H^h|_{\partial^*E},
\]
for some Borel density function \(\theta_E\colon\X\to[\gamma,C]\), where the constants \(C\geq\gamma>0\) depend exclusively
on \(C_D(\cdot)\), \(C_P(\cdot)\), and \(\lambda\). In particular, the perimeter measure \(P(E,\cdot)\) is concentrated on \(\partial^*E\).
\begin{definition}[Precise representative]
Let \((\X,\sfd,\mm)\) be a PI space. Let \(f\in BV(\X)\) be a given function. Then the \emph{approximate lower limit}
\(f^\wedge\colon\X\to[-\infty,+\infty]\) and the \emph{approximate upper limit} \(f^\vee\colon\X\to[-\infty,+\infty]\)
of \(f\) are defined as
\[\begin{split}
f^\wedge(x)&\coloneqq\sup\big\{t\in\R\;\big|\;\Theta^*(\{f<t\},x)=0\big\},\\
f^\vee(x)&\coloneqq\inf\big\{t\in\R\;\big|\;\Theta^*(\{f>t\},x)=0\big\}
\end{split}\]
for every \(x\in\X\), respectively. We also define the Borel set \(\X_f\subseteq\X\) as
\[
\X_f\coloneqq\{f^\wedge>-\infty\}\cap\{f^\vee<+\infty\}.
\]
Finally, the \emph{precise representative} \(\bar f\colon\X_f\to\R\) of \(f\) is the Borel function given by
\[
\bar f(x)\coloneqq\frac{f^\wedge(x)+f^\vee(x)}{2}\quad\text{ for every }x\in\X_f.
\]
\end{definition}
\begin{remark}{\rm
Since \(\{f>t\}\subseteq\{f\geq t\}\subseteq\{f>s\}\) for every \(s,t\in\R\) with \(s<t\), one has that
\[\begin{split}
f^\wedge(x)&=\sup\big\{t\in\R\;\big|\;\Theta^*(\{f\leq t\},x)=0\big\},\\
f^\vee(x)&=\inf\big\{t\in\R\;\big|\;\Theta^*(\{f\geq t\},x)=0\big\}
\end{split}\]
for every \(x\in\X\).
\fr}\end{remark}

It holds that \(\mm(\X\setminus\X_f)=|Df|(\X\setminus\X_f)=0\), so that \(|Df||_{\X_f}=|Df|\),
and that \(\bar f\) is an \(\mm\)-a.e.\ representative of \(f\).
The \emph{jump set} \(J_f\subseteq\X_f\) of \(f\) is defined as
\[
J_f\coloneqq\big\{x\in\X\;\big|\;f^\wedge(x)<f^\vee(x)\big\}.
\]
It holds that \(J_f\) is contained in a countable union of essential boundaries of sets of finite perimeter, thus
in particular \(\mm(J_f)=0\). More precisely, for suitably chosen \((t_n)_n,(s_n)_n\subseteq\R\) with \(t_n\neq s_n\),
\[
J_f=\bigcup_{n\in\N}\partial^*\{\bar f>t_n\}\cap\partial^*\{\bar f>s_n\}.
\]
Furthermore, there exists a Borel function \(\theta_f\colon\X\to[\gamma,C]\) such that
\[
|Df||_{J_f}=(f^\vee-f^\wedge)\theta_f\mathcal H^h|_{J_f}.
\]
The following definition has been proposed in \cite[Definition 6.1]{AMP04}.
We say that the metric measure space $(\X,\sfd,\mm)$ is \emph{isotropic} provided that, given two sets of finite perimeter $E$ and $F$, it holds that
\begin{equation}
    \theta_E =\theta_F \quad\mathcal H^h\text{-a.e. in }\partial^* E \cap \partial^* F.
\end{equation}
In the case where \((\X,\sfd,\mm)\) is isotropic, we have that for any set \(G\subseteq\X\) of finite perimeter it holds
\begin{equation}\label{eq:jump_isotr}
|Df||_{J_f\cap\partial^*G}=(f^\vee-f^\wedge)P(G,\cdot)|_{J_f}.
\end{equation}
We refer to \cite{ambrosio2004topics,kinkorshatuo} for the proofs of the above claims.
\subsection{Calculus on \texorpdfstring{\(\sf RCD\)}{RCD} spaces}
We assume the reader is familiar with the language of \({\sf RCD}(K,N)\) spaces. Let us only recall that an \({\sf RCD}(K,N)\) space \((\X,\sfd,\mm)\), with \(K\in\R\) and \(N\in[1,\infty)\),
is an infinitesimally Hilbertian metric measure space whose synthetic Ricci curvature (resp.\ synthetic dimension) is bounded from below by \(K\) (resp.\ from above by \(N\)),
in the sense of Lott--Sturm--Villani \cite{Lott-Villani09,Sturm06II}. Each \({\sf RCD}(K,N)\) space is a PI space (see \cite{Sturm06II} for the doubling
condition and \cite{Rajala12} for the Poincar\'{e} inequality).
Moreover, ${\sf RCD}(K,N)$ spaces are isotropic. This has been observed in \cite[Example 1.29]{BonPasRaj2020}.
For more of this topic, see the survey \cite{AmbICM} and the references therein.
\medskip

Following \cite{Gigli14} (after \cite{Savare13}), we consider the distinguished algebra of \textbf{test functions} on \(\X\):
\[
{\rm Test}(\X)\coloneqq\big\{f\in{\rm LIP}(\X)\cap D(\Delta)\cap L^\infty(\mm)\;\big|\;|Df|\in L^\infty(\mm),\,\Delta f\in H^{1,2}(\X)\big\}.
\]
The space \({\rm Test}(\X)\) is dense in \(H^{1,2}(\X)\). Moreover, the following property holds:
\begin{lemma}
Let \((\X,\sfd,\mm)\) be an \({\sf RCD}(K,N)\) space. Then \({\rm Test}(\X)\) generates \(L^0({\rm Cap})\).
\end{lemma}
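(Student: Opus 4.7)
The goal is to show that \(\mathcal G({\rm Test}(\X))\) is dense in \(L^0({\rm Cap})\). The plan is to approximate an arbitrary \(f\in L^0({\rm Cap})\) by a simple function, constant on the pieces of a countable Borel partition of \(\X\), and to realise such a simple function as a single element of \(\mathcal G({\rm Test}(\X))\) by multiplying the constants with suitably chosen cut-off test functions. Since elements of \({\rm Test}(\X)\) are Lipschitz, in particular continuous and Borel, they embed canonically into \(L^0({\rm Cap})\) via their continuous representatives.

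The crucial input is the existence of cut-offs inside \({\rm Test}(\X)\). On an \({\sf RCD}(K,N)\) space, a Mondino--Naber-type construction provides, for every \(x_0\in\X\) and every \(R>0\), a function \(\eta_R\in{\rm Test}(\X)\) with \(0\le\eta_R\le 1\), \(\eta_R=1\) on \(B_R(x_0)\), and \(\eta_R=0\) outside \(B_{2R}(x_0)\). This is the main non-trivial ingredient: the Laplacian regularity built into the definition of \({\rm Test}(\X)\) is really used here, and it is the only place in the argument where the full \({\sf RCD}\) hypothesis intervenes.

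Given \(f\in L^0({\rm Cap})\), pick a Borel representative \(F\colon\X\to\R\) and fix a base-point \(x_0\in\X\). Set \(C_k\coloneqq B_k(x_0)\setminus B_{k-1}(x_0)\) for \(k\in\N\) (with the convention \(B_0(x_0)\coloneqq\emptyset\)), so that \((C_k)_k\) is a Borel partition of \(\X\). For \(\epsilon>0\), \(n\in\mathbb Z\), \(k\in\N\), set \(A_{n,k}\coloneqq F^{-1}([n\epsilon,(n+1)\epsilon))\cap C_k\); then \((A_{n,k})_{(n,k)}\) is a Borel partition of \(\X\) modulo the \({\rm Cap}\)-null set where \(F=\pm\infty\). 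For each \((n,k)\), put \(v_{n,k}\coloneqq n\epsilon\,\eta_k\in{\rm Test}(\X)\); since \(A_{n,k}\subseteq B_k(x_0)\) and \(\eta_k=1\) there, one has \(\1_{A_{n,k}}^{\rm Cap}\cdot v_{n,k}=n\epsilon\,\1_{A_{n,k}}^{\rm Cap}\). After a bijective reindexing of \((A_{n,k})_{(n,k)\in\mathbb Z\times\N}\) by \(\N\), the series
\[
g_\epsilon\coloneqq\sum_{n,k}\1_{A_{n,k}}^{\rm Cap}\cdot v_{n,k}
\]
is an element of \(\mathcal G({\rm Test}(\X))\); convergence of this series in \(L^0({\rm Cap})\) holds because, on any bounded open set \(U\), the set \(\{|F|>N\epsilon\}\cap U\) has \({\rm Cap}\)-measure tending to \(0\) as \(N\to\infty\), by decreasing continuity of \({\rm Cap}\) along nested sets of finite capacity.

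By construction, \(|F-g_\epsilon|\le\epsilon\) holds \({\rm Cap}\)-a.e.\ on \(\X\), so \(g_\epsilon\to f\) in \(L^0({\rm Cap})\) as \(\epsilon\to 0\). This yields density of \(\mathcal G({\rm Test}(\X))\) in \(L^0({\rm Cap})\), which is precisely the required generation property. The only delicate step is the existence of the cut-offs \(\eta_R\in{\rm Test}(\X)\); everything else is routine bookkeeping on countable partitions and simple functions.
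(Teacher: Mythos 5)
Your proof is correct and follows essentially the same approach as the paper's: partition \(\X\) into bounded Borel pieces on which the target function is within \(\varepsilon\) of a constant, then realise each constant by multiplying with a test cut-off function from the Ambrosio--Mondino--Savar\'e construction. The paper's version is slightly more compact (asserting the existence of such a partition directly, rather than constructing it from annular shells and dyadic level sets, and not worrying about series convergence since the sum defining an element of \(\mathcal G(\cdot)\) is pointwise defined on a genuine Borel partition), but the key ingredient -- the existence of test cut-offs, which is where the \({\sf RCD}\) hypothesis enters -- is identical.
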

\begin{proof}
In fact, we will prove a stronger statement, namely that if \(f\colon\X\to\R\) is a Borel function and \(\varepsilon>0\) is given, then there exists a Borel partition
\((E_n)_n\) of \(\X\) and \((f_n)_n\subseteq{\rm Test}(\X)\) such that
\begin{equation}\label{eq:test_gen}
|f(x)-f_n(x)|\leq\varepsilon\quad\text{ for every }n\in\N\text{ and }x\in E_n.
\end{equation}
First, we can find a Borel partition \((E_n)_n\) of \(\X\) into bounded sets and a sequence \((\lambda_n)_n\subseteq\R\) such that \(|f-\lambda_n|\leq\varepsilon\) on \(E_n\).
Using the results of \cite{AmbrosioMondinoSavare13-2}, for any \(n\in\N\) we can find a test cut-off function \(f_n\in{\rm Test}(\X)\) satisfying \(f_n=\lambda_n\) on an open ball
containing \(E_n\). Property \eqref{eq:test_gen} follows.
\end{proof}

Given any \(f,g\in{\rm Test}(\X)\), we have that
\[
\langle\nabla f,\nabla g\rangle\in H^{1,2}(\X).
\]
In particular, we deduce that for any \(f\in{\rm Test}(\X)\) the function \(|Df|\) admits a \textbf{quasi-continuous representative} \({\rm QCR}(|Df|)\in L^0({\rm Cap})\), cf.\ with \cite{debin2019quasicontinuous}.
Therefore, the following result, which is taken from \cite[Theorem 3.6]{debin2019quasicontinuous}, is meaningful:
\begin{theorem}[Capacitary tangent module]
Let \((\X,\sfd,\mm)\) be an \({\sf RCD}(K,N)\) space. Then there exists a unique (up to a unique isomorphism) couple \((L^0_{\rm Cap}(T\X),\bar\nabla)\), where:
\begin{itemize}
\item[\(\rm i)\)] \(L^0_{\rm Cap}(T\X)\) is an \(L^0({\rm Cap})\)-Banach \(L^0({\rm Cap})\)-module.
\item[\(\rm ii)\)] \(\bar\nabla\colon{\rm Test}(\X)\to L^0_{\rm Cap}(T\X)\) is a linear operator such that
\[\begin{split}
|\bar\nabla f|={\rm QCR}(|Df|)&\quad\text{ for every }f\in{\rm Test}(\X),\\
\{\bar\nabla f\,:\,f\in{\rm Test}(\X)\}&\quad\text{ generates }L^0_{\rm Cap}(T\X).
\end{split}\]
\end{itemize}
The space \(L^0_{\rm Cap}(T\X)\) is called the \textbf{capacitary tangent module} of \((\X,\sfd,\mm)\).
\end{theorem}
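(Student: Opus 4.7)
The plan is to follow the standard Gigli-style pre-dual construction of the tangent module, but carried out at the finer capacity level. The key ingredients are the subadditivity and locality properties of the quasi-continuous representative $f \mapsto {\rm QCR}(|Df|)$ on ${\rm Test}(\X)$, both of which are developed in \cite{debin2019quasicontinuous}. Uniqueness is then purely formal: given two candidate couples $(\mathscr M_i, \bar\nabla_i)$ for $i=1,2$, one defines an $L^0({\rm Cap})$-linear map on the generating set $\mathcal G\big(\{\bar\nabla_1 f : f \in {\rm Test}(\X)\}\big)$ by $\sum_n \1_{E_n} \cdot \bar\nabla_1 f_n \mapsto \sum_n \1_{E_n} \cdot \bar\nabla_2 f_n$, and the identity $|\bar\nabla_i f| = {\rm QCR}(|Df|)$ together with the axioms of the pointwise norm guarantees that this map is a well-defined pointwise isometry, which extends uniquely by continuity to an isomorphism between the two completions.

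For existence, I would build $L^0_{\rm Cap}(T\X)$ as follows. Consider the set $\mathcal V$ of formal countable expressions $\mathbf v = \sum_{n \in \N} \1_{E_n} \cdot \bar\nabla f_n$, where $(E_n)_n \subseteq \mathscr B(\X)$ is a Borel partition of $\X$ and $(f_n)_n \subseteq {\rm Test}(\X)$; addition and $L^0({\rm Cap})$-scalar multiplication are defined after passing to common refinements of the underlying partitions. Declare the candidate pointwise norm
\[
|\mathbf v| := \sum_{n \in \N} \1_{E_n} \cdot {\rm QCR}(|Df_n|) \in L^0({\rm Cap}).
\]
Well-definedness, together with the triangle inequality, reduces to two ${\rm Cap}$-a.e.\ statements on ${\rm Test}(\X)$: a locality property saying that ${\rm QCR}(|Df|) = {\rm QCR}(|Dg|)$ holds ${\rm Cap}$-a.e.\ on the quasi-open region $\{f = g\}$, and the subadditivity ${\rm QCR}(|D(f+g)|) \leq {\rm QCR}(|Df|) + {\rm QCR}(|Dg|)$ ${\rm Cap}$-a.e. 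Both follow from the uniqueness (up to ${\rm Cap}$-a.e.\ equality) of the quasi-continuous representative and the corresponding $\mm$-a.e.\ statements for $|D\cdot|$, along the lines of \cite{debin2019quasicontinuous}. With these in place, $|\cdot|$ descends to an honest pointwise norm on the quotient $\mathcal V/\!\sim$, where $\mathbf v \sim \mathbf w$ iff $|\mathbf v - \mathbf w| = 0$ ${\rm Cap}$-a.e., and completion with respect to the induced distance yields an $L^0({\rm Cap})$-Banach $L^0({\rm Cap})$-module. Setting $\bar\nabla f$ to be the equivalence class of $\1_\X \cdot \bar\nabla f$ then produces the required linear operator, with $|\bar\nabla f| = {\rm QCR}(|Df|)$ and the generation property built in by construction.

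The main obstacle is precisely the ${\rm Cap}$-a.e.\ locality of ${\rm QCR}(|D\cdot|)$: because ${\rm Cap}$ resolves sets that are $\mm$-negligible, one cannot simply port the $\mm$-a.e.\ tangent module construction to this setting, and every algebraic identity (combining partitions, passing to countable sums, taking limits in the distance induced by the pointwise norm) must be checked to survive in the quasi-continuous category. This is precisely the content of the analysis carried out in \cite{debin2019quasicontinuous}, and it is where the bulk of the genuine work sits; once it is available, the module construction above becomes routine.
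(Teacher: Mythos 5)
The paper does not prove this result: the statement is quoted from \cite[Theorem 3.6]{debin2019quasicontinuous}, so there is no in-paper proof to compare against. Your sketch does follow the construction carried out in that reference --- a Gigli-style pre-module built from simple-function combinations of the symbols $\bar\nabla f$, a pointwise seminorm defined via $\mathrm{QCR}$, quotient by the kernel, completion, and uniqueness via the pointwise isometry on generators --- so your plan is essentially the right one.

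Two imprecisions are worth flagging. First, $\mathcal V$ as you define it is not closed under multiplication by a general $f\in L^0(\mathrm{Cap})$: your formal sums carry only characteristic-function coefficients, so before completion $\mathcal V/\!\sim$ is an $L^0(\mathrm{Cap})$-normed module only over the subalgebra of simple functions (or over $\mathrm{Test}(\X)$), and the full $L^0(\mathrm{Cap})$-module structure is inherited after completion --- exactly the setting of the paper's proposition on completion of normed modules, which you should invoke explicitly. Second, and more substantively, the $\mathrm{Cap}$-a.e.\ locality ``$\mathrm{QCR}(|Df|)=\mathrm{QCR}(|Dg|)$ on $\{f=g\}$'' is not the primitive ingredient here (and, incidentally, $\{\tilde f=\tilde g\}$ is quasi-\emph{closed}, not quasi-open). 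What actually does the work is: (a) the $\sf RCD$-specific fact that $\langle\nabla f,\nabla g\rangle\in H^{1,2}(\X)$ for $f,g\in\mathrm{Test}(\X)$, which is what makes $|Df|^2$ --- and more generally the pointwise norm of any element of $\mathrm{Test}(T\X)$ --- admit a quasi-continuous representative in the first place; and (b) the lemma that a quasi-continuous function which is $\geq 0$ $\mm$-a.e.\ is $\geq 0$ $\mathrm{Cap}$-a.e. Given (a) and (b), the $\mathrm{Cap}$-a.e.\ triangle inequality and positive homogeneity for $\mathrm{QCR}(|D\cdot|)$ follow at once from their $\mm$-a.e.\ counterparts, and this is simultaneously what makes the seminorm well-posed and what closes the uniqueness argument (since the correct pre-module identification is $\mathrm{QCR}(|D(f_n-g_m)|)=0$ $\mathrm{Cap}$-a.e.\ on $E_n\cap F_m$). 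Your locality statement is then a consequence of (a) and (b), not a prerequisite, and stating it as the key lemma obscures where the genuine regularity input from the $\sf RCD$ assumption enters.
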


The space \({\rm Test}(T\X)\subseteq L^0_{\rm Cap}(T\X)\) of \textbf{test vector fields} is then defined as
\[
{\rm Test}(T\X)\coloneqq\bigg\{\sum_{i=1}^n g_i\bar\nabla f_i\;\bigg|\;n\in\N,\,(f_i)_{i=1}^n,(g_i)_{i=1}^n\subseteq{\rm Test}(\X)\bigg\}.
\]
The space \({\rm Test}(T\X)\) generates \(L^0_{\rm Cap}(T\X)\). Given a Borel measure \(\mu\) on \(\X\) with \(\mu\ll{\rm Cap}\), we set
\[
L^0_\mu(T\X)\coloneqq L^0_{\rm Cap}(T\X)_\mu,
\]
where the right-hand side is defined as in \eqref{eq:def_quot_M}. One can readily check that
\[
L^0_\mm(T\X)\cong L^0(T\X),\quad\text{ with }\pi_\mm\circ\bar\nabla=\nabla|_{{\rm Test}(\X)}.
\]
Thanks to \eqref{eq:ac_Cap}, we can consider \(L^0_{|Df|}(T\X)\) whenever \(f\in L^1_{loc}(\X)\) is of bounded variation.
\section{Integration of Banach modules}\label{sec:banach_module_bundles}
In this section, we introduce a notion of Banach module bundle consistent with a disjoint measure-valued map, which generalises the notion
of strong Banach bundle studied in \cite{LGP22}.
\medskip

First, we introduce a useful shorthand notation. Let \((Q,\mathcal Q,\mathfrak q)\) be a measure space, \((\X,\Sigma)\) a measurable space, and
\(q\mapsto\mu_q\) a disjoint measure-valued map from \(Q\) to \(\X\). Given any collection \(\mathscr M_\star=\{\mathscr M_q\}_{q\in Q}\), where \(\mathscr M_q\)
is an \(L^0(\mu_q)\)-Banach \(L^0(\mu_q)\)-module, and for any element \(v\) of \(\mathscr S(\mathscr M_\star)\) or of \(\mathscr S_{\mathfrak q}(\mathscr M_\star)\), we denote by
\[
|v_\star|\in\mathscr S_{\mathfrak q}(L^0(\mu_\star))
\]
the (\(\mathfrak q\)-a.e.\ equivalence class of the) map sending \(q\in Q\) to \(|v_q|\in L^0(\mu_q)\).
\begin{definition}[Banach module bundle]
\label{def:Banach_module_bundle}
Let \((Q,\mathcal Q,\mathfrak q)\) be a measure space, \((\X,\Sigma)\) a measurable space, and \(q\mapsto\mu_q\) a disjoint measure-valued map
from \(Q\) to \(\X\). Let \((\mathscr M_\star,\Theta,\Phi)\) be such that:
\begin{itemize}
\item[\(\rm a)\)] \(\mathscr M_\star=\{\mathscr M_q\}_{q\in Q}\) and each space \(\mathscr M_q\)
is an \(L^0(\mu_q)\)-Banach \(L^0(\mu_q)\)-module.
\item[\(\rm b)\)] \(\Phi\) is a subalgebra of \(L^0(\mathfrak q;L^0(\mu_\star))\) such that the subalgebra \(\mathcal G_{\mathfrak q}(\Phi)\) is dense in \(L^0(\mu)\), where
\[
\mathcal G_{\mathfrak q}(\Phi)\coloneqq\bigg\{\sum_{n\in\N}{\sf j}_{\mathfrak q}(\1_{S_n}^{\mathfrak q}f_n)\;\bigg|\;(S_n)_{n\in\N}\subseteq\mathcal Q
\text{ partition of }Q,\,(f_n)_{n\in\N}\subseteq\Phi\bigg\}\subseteq L^0(\mu).
\]
In particular, it holds that \(\widehat{{\sf j}_{\mathfrak q}(\Phi)}=L^0(\mu)\).
\item[\(\rm c)\)] \(\Theta\) is a vector subspace of \(\mathscr S_{\mathfrak q}(\mathscr M_\star)\) that
is also a module over \(\Phi\) and verifies
\[
|v_\star|\in L^0(\mathfrak q;L^0(\mu_\star))\quad\text{ for every }v\in\Theta.
\]
\end{itemize}
Then we say that \((\mathscr M_\star,\Theta,\Phi)\) is a \textbf{Banach module bundle} consistent with \(q\mapsto\mu_q\).
\end{definition}

\begin{remark}{\rm
Let us clarify why the elements of $\mathcal{G}_\q(\Phi)$ are well-posed. Fix a partition $(S_n)_n\subseteq \mathcal{Q}$ of $Q$ and a sequence $(f_n)_n\subseteq\Phi$.
Let $F_n\in L^0(\mu)$, with representative $\bar F_n\in\mathcal{L}^0(\Sigma)$ be such that ${\rm i}_{\mathfrak q}(F_n)=f_n$. Then \({\sf i}_\q(\1_{U_n}^\mu F_n)=\1_{S_n}^\q f_n\in L^0(\q;L^0(\mu_\star))\),
where \(U_n\coloneqq\bigcup_{q\in S_n}E_q\in\Sigma\). Furthermore, \((U_n)_n\) are pairwise disjoint, \(\sum_{n\in\N}\1_{U_n}\bar F_n\in\mathcal L^0(\Sigma)\), and
\(\sum_{n\in \N}{\sf j}_q(\1_{S_n}^\q f_n)=\big[\sum_{n\in \N}\1_{U_n}\bar F_n\big]_\mu\).
\fr}\end{remark}

One can readily check that the mapping \(|\cdot|\colon\Theta\to L^0(\mu)\), which we define as
\[
|v|\coloneqq{\sf j}_{\mathfrak q}(|v_\star|)\in L^0(\mu)\quad\text{ for every }v\in\Theta,
\]
is a pointwise norm on \(\Theta\). Therefore, the space \(\Theta\) is an \(L^0(\mu)\)-normed \(\Phi\)-module.
\begin{remark}\label{rmk:str_Ban_bundle_ex}{\rm
Each strong Banach bundle (in the sense of \cite{LGP22}) is in particular a Banach module bundle, in the sense we are going to describe.
Given a measure space \((Q,\mathcal Q,\mathfrak q)\) and strong Banach bundle \((\B_\star,{\rm T})\) over
\((Q,\mathcal Q,\mathfrak q)\), we can regard (thanks to Example \ref{ex:Ban_as_mod}) each space \(\B_q\) as an
\(L^0(\delta_{(q,{\sf o})})\)-Banach \(L^0(\delta_{(q,{\sf o})})\)-module, where \((\{{\sf o}\},\delta_{\sf o})\)
is the one-point probability space. Recall also from Example \ref{ex:trivial_disint} that \(q\mapsto\delta_{(q,{\sf o})}\)
is a disjoint measure-valued map from \(Q\) to \(Q\times\{{\sf o}\}\) and
\(\mathfrak q\otimes\delta_{\sf o}=\int\delta_{(q,{\sf o})}\,\d\mathfrak q(q)\). Moreover,
\(\R_{\mathfrak q}\subseteq L^0(\mathfrak q)\cong L^0(\mathfrak q\otimes\delta_{\sf o})\cong L^0(\mathfrak q;L^0(\delta_{(\star,{\sf o})}))\).
Therefore, up to the above identifications, \((\B_\star,{\rm T},\R_{\mathfrak q})\) is a Banach module bundle consistent with \(q\mapsto\delta_{(q,{\sf o})}\).
\fr}\end{remark}
\begin{definition}[Integral of a Banach module bundle]\label{def:integral_of_bundle}
Let \((Q,\mathcal Q,\mathfrak q)\) be a measure space, \((\X,\Sigma)\) a measurable space, and \(q\mapsto\mu_q\)
a disjoint measure-valued map from \(Q\) to \(\X\). Let \((\mathscr M_\star,\Theta,\Phi)\) be a Banach module
bundle consistent with \(q\mapsto\mu_q\). Then we define \(\int\mathscr M_q\,\d\mathfrak q(q)\) as
\[
\int\mathscr M_q\,\d\mathfrak q(q)\coloneqq\text{completion of the }L^0(\mu)\text{-normed }\Phi\text{-module }\Theta.
\]
The \(L^0(\mu)\)-Banach \(L^0(\mu)\)-module \(\int\mathscr M_q\,\d\mathfrak q(q)\) is referred to as the \textbf{integral}
of \((\mathscr M_\star,\Theta,\Phi)\). Given an element \(v\in\int\mathscr M_q\,\d\mathfrak q(q)\), we will often denote
it by \(\int v_q\,\d\mathfrak q(q)\). Whenever we want to make explicit the choice of the classes \(\Theta\) and \(\Phi\),
we write \(\Gamma(\mathscr M_\star,\Theta,\Phi)\) instead of \(\int\mathscr M_q\,\d\mathfrak q(q)\).
\end{definition}
\begin{theorem}[Identification of the integral]\label{thm:identif_int_strong}
Let \((Q,\mathcal Q,\mathfrak q)\) be a measure space, \((\X,\Sigma)\) a measurable space, and \(q\mapsto\mu_q\)
a disjoint measure-valued map from \(Q\) to \(\X\). Let \((\mathscr M_\star,\Theta,\Phi)\) be a Banach module
bundle consistent with \(q\mapsto\mu_q\). Let us denote by \(\bar\Gamma(\mathscr M_\star,\Theta,\Phi)\)
the space of all those elements \(v\in\mathscr S(\mathscr M_\star)\) satisfying the following properties:
\begin{itemize}
\item[\(\rm i)\)] The element \(|v_\star-w_\star|\in\mathscr S_{\mathfrak q}(L^0(\mu_\star))\) belongs to
\(L^0(\mathfrak q;L^0(\mu_\star))\) for every \(w\in\Theta\).
\item[\(\rm ii)\)] There exists a countable set \(\mathcal C_v\subseteq\Theta\) such that \(v_q\in{\rm cl}_{\mathscr M_q}(\{w_q\,:\,w\in\mathcal C_v\})\) for \(\mathfrak q\)-a.e.\ \(q\in Q\).
\end{itemize}
Then \(\Theta\subseteq\pi_{\mathfrak q}(\bar\Gamma(\mathscr M_\star,\Theta,\Phi))\) and
\(\big(\pi_{\mathfrak q}(\bar\Gamma(\mathscr M_\star,\Theta,\Phi)),|\cdot|\big)\)
is (isomorphic to) the integral \(\int\mathscr M_q\,\d\mathfrak q(q)\).
\end{theorem}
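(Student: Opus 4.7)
The strategy is to exhibit $\pi_{\mathfrak q}(\bar\Gamma(\mathscr M_\star,\Theta,\Phi))$ as an $L^0(\mu)$-Banach $L^0(\mu)$-module in which $\Theta$ embeds isometrically as a $\Phi$-submodule with generating image; the identification with $\int\mathscr M_q\,\d\mathfrak q(q)$ will then follow from the uniqueness clause in the completion proposition. By Remark \ref{rmk:wlog_mu_q_fin}, I reduce to the case in which $\mathfrak q$, $\mu$, and each $\mu_q$ are finite. Throughout, I carefully distinguish between representatives in $\mathscr S(\mathscr M_\star)$ and their classes in $\mathscr S_{\mathfrak q}(\mathscr M_\star)$, since conditions (i)--(ii) are formulated on representatives while the integration space lives modulo $\mathfrak q$-a.e.\ equality.

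First, I equip $\bar\Gamma(\mathscr M_\star,\Theta,\Phi)$ with algebraic and metric structure. Vector addition and componentwise product by elements of $\mathscr S(L^0(\mu_\star))$ preserve (i)--(ii) directly from $|v+v'-w|\le|v-w|+|v'-0|+|0-w|$ and similar estimates. For $F\in L^0(\mu)$ and $v\in\bar\Gamma$, the product $F\cdot v$ is defined via a representative $\bar F\in\mathscr S(L^0(\mu_\star))$ of ${\sf i}_{\mathfrak q}(F)\in L^0(\mathfrak q;L^0(\mu_\star))$ by $(F\cdot v)_q\coloneqq\bar F_q\cdot v_q$; its class in $\pi_{\mathfrak q}(\bar\Gamma)$ is independent of $\bar F$, and conditions (i)--(ii) persist because $F$ can be approximated in $L^0(\mu)$ by elements of $\mathcal G_{\mathfrak q}(\Phi)$ (which act on $\Theta$ via the $\Phi$-module structure), combined with the module closure of $\Theta$. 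The map $v\mapsto|v|\coloneqq{\sf j}_{\mathfrak q}(|v_\star|)$ is well-defined on $\bar\Gamma$ by condition (i) applied with $w=0\in\Theta$, descends to a pointwise norm on the quotient $\pi_{\mathfrak q}(\bar\Gamma)$, and satisfies the axioms of Definition \ref{def:normed_module} fiberwise. The inclusion $\Theta\subseteq\pi_{\mathfrak q}(\bar\Gamma)$ is obtained by picking, for each $w\in\Theta$, an arbitrary representative $\tilde w\in\mathscr S(\mathscr M_\star)$: condition (ii) holds trivially with $\mathcal C_{\tilde w}=\{w\}$, and (i) holds since $\tilde w-\tilde w'$ represents $w-w'\in\Theta$ for any representative $\tilde w'$ of $w'\in\Theta$.

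The next step is completeness of $\pi_{\mathfrak q}(\bar\Gamma)$. Given a Cauchy sequence $(v^n)_n$, I extract a subsequence $(v^{n_k})_k$ with $\sfd_{\pi_{\mathfrak q}(\bar\Gamma)}(v^{n_k},v^{n_{k+1}})\le 2^{-k}$. Using $\mu=\int\mu_q\,\d\mathfrak q(q)$ and a Borel--Cantelli argument, for $\mathfrak q$-a.e.\ $q\in Q$ the sequence $(v^{n_k}_q)_k$ is Cauchy in $\mathscr M_q$, hence converges by completeness of $\mathscr M_q$ to some $v_q\in\mathscr M_q$ (set $v_q\coloneqq 0$ on the $\mathfrak q$-null exceptional set). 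To show $v\in\bar\Gamma$, I verify (i) by applying the pointwise convergence lemma to measurable representatives of $|v^{n_k}_\star-w_\star|$, and (ii) with the countable union $\mathcal C_v\coloneqq\bigcup_k\mathcal C_{v^{n_k}}\subseteq\Theta$. Dominated convergence then gives $v^{n_k}\to v$ in $\pi_{\mathfrak q}(\bar\Gamma)$, and the standard promotion of subsequential limits yields $v^n\to v$.

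The main obstacle, and the creative step, is the density of $\mathcal G(\Theta)$ in $\pi_{\mathfrak q}(\bar\Gamma)$. Given $v\in\bar\Gamma$ and $\varepsilon>0$, fix a countable $\mathcal C_v=\{w^n\}_{n\in\N}\subseteq\Theta$ from condition (ii). Using condition (i) together with Fubini for $\mu=\int\mu_q\,\d\mathfrak q(q)$, the functions $q\mapsto\sfd_{L^0(\mu_q)}(|v_q-w^n_q|,0)$ are measurable, so
\[
S_n\coloneqq\Big\{q\in Q\,:\,\sfd_{L^0(\mu_q)}(|v_q-w^n_q|,0)\le\varepsilon,\;\sfd_{L^0(\mu_q)}(|v_q-w^k_q|,0)>\varepsilon\text{ for all }k<n\Big\}
\]
is a measurable partition of a $\mathfrak q$-full-measure subset of $Q$; the full-measure property comes from condition (ii) since $v_q\in{\rm cl}_{\mathscr M_q}(\{w^n_q\}_n)$ forces one of the distances to fall below $\varepsilon$ for $\mathfrak q$-a.e.\ $q$. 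Setting $E_n\coloneqq\bigcup_{q\in S_n}E_q\in\Sigma$ via the disjoint measure-valued map, the element $v'\coloneqq\sum_n\1_{E_n}^\mu\cdot w^n\in\mathcal G(\Theta)$ (completed with $w^0=0$ on the complement) satisfies $v'_q=w^n_q$ on $S_n$, hence
$\sfd_{\pi_{\mathfrak q}(\bar\Gamma)}(v,v')=\int\sfd_{L^0(\mu_q)}(|v_q-v'_q|,0)\,\d\mathfrak q(q)\le\varepsilon\,\mathfrak q(Q)$
by disintegrating $\sfd_{L^0(\mu)}$. Combining the isometric embedding, completeness, and density with the universal property of the completion concludes the identification $\pi_{\mathfrak q}(\bar\Gamma(\mathscr M_\star,\Theta,\Phi))\cong\int\mathscr M_q\,\d\mathfrak q(q)$.
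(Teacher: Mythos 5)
Your overall strategy — exhibit $\pi_{\mathfrak q}(\bar\Gamma(\mathscr M_\star,\Theta,\Phi))$ as an $L^0(\mu)$-Banach $L^0(\mu)$-module into which $\Theta$ embeds isometrically with generating image, then invoke the uniqueness of the completion — is essentially the paper's, and your density argument via the ``first-hit'' measurable partition $S_n$ is a correct, somewhat more explicit version of the paper's approximation claim. Your completeness argument via Borel--Cantelli is a welcome addition, since the paper leaves that point implicit.

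There is, however, a genuine gap in your second paragraph. You assert that condition (i) is preserved under addition ``directly from $|v+v'-w|\le|v-w|+|v'-0|+|0-w|$ and similar estimates.'' This does not work: condition (i) asks that $|(v+v')_\star-w_\star|$ possess a common Borel representative in $\mathcal L^0(\Sigma)$, and domination by an element of $L^0(\mathfrak q;L^0(\mu_\star))$ does \emph{not} imply membership, because $L^0(\mathfrak q;L^0(\mu_\star))$ is not an order ideal in $\mathscr S_{\mathfrak q}(L^0(\mu_\star))$ (already in the model case $Q=[0,1]$, $\X\cong Q$, $\mu_q=\delta_q$, where $L^0(\mathfrak q;L^0(\mu_\star))$ consists of measurable functions on $[0,1]$, and the characteristic function of a nonmeasurable set is dominated by the constant $1$). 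Moreover, since $|(v+v')_q-w_q|$ is not determined by $|v_q-w_q|$ and $|v'_q|$, no pointwise inequality in the fiberwise norms alone can settle this. The paper handles the point by proving the density claim \emph{first}: once you know ${\sf j}_{\mathfrak q}(|v_\star-z^k_\star|)\to 0$ for some $(z^k)_k\subseteq\mathcal G_{\mathfrak q}(\Theta)$ — your paragraph 4, which only uses condition (i) applied to $v$ against $w\in\Theta$ (and its immediate extension to $z\in\mathcal G_{\mathfrak q}(\Theta)$), so does not depend on your paragraph 2 — you can write $|(v+v')_\star-w_\star|=\lim_k|z^k_\star+\tilde z^k_\star-w_\star|$ in $L^0(\mu)$ and use the topological closedness of $L^0(\mathfrak q;L^0(\mu_\star))\cong L^0(\mu)$, and similarly for $L^0(\mu)$-multiplication. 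As written your proof is logically circular (paragraph 2 needs paragraph 4, and paragraphs 3--4 are presented as resting on paragraph 2); reordering density $\to$ algebraic closure $\to$ completeness repairs it.
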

\begin{proof}
Set \(\Gamma\coloneqq\pi_{\mathfrak q}(\bar\Gamma(\mathscr M_\star,\Theta,\Phi))\) for brevity.
The inclusion \(\Theta\subseteq\Gamma\) is immediate. Letting
\[
\mathcal G_{\mathfrak q}(\Theta)\coloneqq\bigg\{\sum_{n\in\N}\1_{E_n}^{\mathfrak q}w^n\;\bigg|\;(E_n)_{n\in\N}\subseteq\mathcal Q
\text{ partition of }Q,\,(w^n)_{n\in\N}\subseteq\Theta\bigg\}\subseteq\mathscr S_{\mathfrak q}(\mathscr M_\star),
\]
we have that \(\mathcal G_{\mathfrak q}(\Theta)\) is a vector subspace of \(\mathscr S_{\mathfrak q}(\mathscr M_\star)\), a module over
\(\mathcal G_{\mathfrak q}(\Phi)\), and a subset of \(\Gamma\). One can also readily check that \(|v_\star-z_\star|\in L^0(\mathfrak q;L^0(\mu_\star))\)
for every \(v\in\Gamma\) and \(z\in\mathcal G_{\mathfrak q}(\Theta)\). We claim that
\begin{equation}\label{eq:identif_int_strong_aux}
\forall v\in\Gamma\quad\exists(z^k)_{k\in\N}\subseteq\mathcal G_{\mathfrak q}(\Theta):\quad{\sf j}_{\mathfrak q}(|v_\star-z^k_\star|)\to 0\,\text{ in }L^0(\mu).
\end{equation}
In order to prove it, notice that for any \(k\in\N\) we can find a partition \((E^k_n)_{n\in\N}\subseteq\mathcal Q\) of \(Q\)
and a sequence \((w^{k,n})_{n\in\N}\subseteq\mathcal C_v\) such that \(z^k\coloneqq\sum_{n\in\N}\1_{E^k_n}^{\mathfrak q}w^{k,n}\in\mathcal G_{\mathfrak q}(\Theta)\) satisfies
\(\sfd_{\mathscr M_q}(v_q,z^k_q)\leq\frac{1}{k}\) for \(\mathfrak q\)-a.e.\ \(q\in Q\).
Thanks to Remark \ref{rmk:wlog_mu_q_fin}, we can assume that \(\mathfrak q\) and all the \(\mu_q\)'s are finite. Then
\[
\sfd_{L^0(\mu)}\big({\sf j}_{\mathfrak q}(|v_\star-z^k_\star|),0\big)=
\int\!\!\!\int|v_q-z^k_q|\wedge 1\,\d\mu_q\,\d\mathfrak q(q)=\int\sfd_{\mathscr M_q}(v_q,z^k_q)\,\d\mathfrak q(q)\leq\frac{\mathfrak q(Q)}{k},
\]
whence the validity of \eqref{eq:identif_int_strong_aux} follows. Having \eqref{eq:identif_int_strong_aux} at
disposal, we can prove the following facts:
\begin{itemize}
\item Thanks to \(\big||v_\star|-|z^k_\star|\big|\leq|v_\star-z^k_\star|\),
we obtain that \(|v_\star|\in L^0(\mathfrak q;L^0(\mu_\star))\) for every \(v\in\Gamma\).
\item If \(v,\tilde v\in\Gamma\) are given, then we can find \((z^k)_{k\in\N},(\tilde z^k)_{k\in\N}\subseteq\mathcal G_{\mathfrak q}(\Theta)\)
such that \({\sf j}_{\mathfrak q}(|v_\star-z^k_\star|)\to 0\) and \({\sf j}_{\mathfrak q}(|\tilde v_\star-\tilde z^k_\star|)\to 0\)
in \(L^0(\mu)\). Since for any \(w\in\Theta\) we have \(z^k+\tilde z^k-w\in\mathcal G_{\mathfrak q}(\Theta)\) and
\[
\big||v_\star+\tilde v_\star-w_\star|-|z^k_\star+\tilde z^k_\star-w_\star|\big|\leq\big|(v_\star+\tilde v_\star)-(z^k_\star+\tilde z^k_\star)\big|
\leq|v_\star-z^k_\star|+|\tilde v_\star-\tilde z^k_\star|,
\]
by letting \(k\to\infty\) we deduce that \(|v_\star+\tilde v_\star-w_\star|\in L^0(\mathfrak q;L^0(\mu_\star))\)
and \(v+\tilde v\in\Gamma\).
\item If \(v\in\Gamma\) and \(f\in L^0(\mu)\) are given, then we can find \((z^k)_{k\in\N}\subseteq\mathcal G_{\mathfrak q}(\Theta)\)
and \((f_k)_{k\in\N}\subseteq\mathcal G_{\mathfrak q}(\Phi)\) such that \({\sf j}_{\mathfrak q}(|v_\star-z^k_\star|)\to 0\) and
\(f_k\to f\) in \(L^0(\mu)\). Since for any given element \(w\in\Theta\) we have that \(f_k\cdot z^k-w\in\mathcal G_{\mathfrak q}(\Theta)\) and
\[\begin{split}
\big||(f\cdot v)_\star-w_\star|-|(f_k\cdot z^k)_\star-w_\star|\big|&\leq\big|(f\cdot v)_\star-(f_k\cdot z^k)_\star\big|\\
&\leq|f-f_k||v_\star|+|f_k||v_\star-z^k_\star|,
\end{split}\]
by letting \(k\to\infty\) we deduce that \(|(f\cdot v)_\star-w_\star|\in L^0(\mathfrak q;L^0(\mu_\star))\) and \(f\cdot v\in\Gamma\).
\end{itemize}
Therefore, we conclude that \(\Gamma\) is an \(L^0(\mu)\)-Banach \(L^0(\mu)\)-module
isomorphic to \(\int\mathscr M_q\,\d\mathfrak q(q)\).
\end{proof}
\begin{remark}{\rm
If $\mathscr{M}_\star=L^0(\mu_\star)$ and $\Theta=\Phi=L^0(\q;L^0(\mu_\star))$, then \(L^0(\q;L^0(\mu_\star))\cong\int\mathscr M_q\,\d\mathfrak q(q)\).
\fr}\end{remark}
\subsection{Disintegration of a module with respect to a disjoint measure-valued map}
The next two theorems assert that, starting from a module $\mathscr{M}$ and a disjoint measure-valued map, we can associate a Banach module bundle such that its integral is isomorphic to $\mathscr{M}$.
We have to assume something more either on $\mathscr{M}$ or on the disjoint measure-valued map. Indeed, in the first theorem $\mathscr{M}$ is assumed to be countably-generated, in the second one the measures $\mu_q$'s are all absolutely continuous with respect to a background boundedly-finite, submodular
outer measure $\nu$. 
Regarding the tools involved in the proof of Theorem \ref{thm:disintegration_countably_generated}, we refer the reader to \cite{DMLP21}, where the concepts of Banach $\B$-bundle $\bf E$ and of $\bar\Gamma(\bf E)$ are presented.
\begin{theorem}[Disintegration of a Banach module]
\label{thm:disintegration_countably_generated}
Let \((\X,\Sigma,\mu)\) be a \(\sigma\)-finite measure space and let \(\mathscr M\) be a countably-generated \(L^0(\mu)\)-Banach \(L^0(\mu)\)-module.
Let \((Q,\mathcal Q,\mathfrak q)\) be a measure space and let \(Q\ni q\mapsto\mu_q\) be a disjoint measure-valued map from \(Q\) to \(\X\) such that
\(\mu=\int\mu_q\,\d\mathfrak q(q)\). Then there exists a Banach module bundle \((\mathscr M_\star,\Theta, \Phi)\) consistent with \(Q\ni q\mapsto\mu_q\), such that
\[
\mathscr M\cong\int\mathscr M_q\,\d\mathfrak q(q).
\]
\end{theorem}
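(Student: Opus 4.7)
The strategy is to realise $\mathscr M$ concretely as a space of sections of a Banach bundle, and then use $\mu=\int\mu_q\,\d\mathfrak q(q)$ to localise each section. Since $\mathscr M$ is countably generated, the representation results of \cite{DMLP21} provide a Banach $\B$-bundle $\mathbf E$ over $(\X,\Sigma)$ together with a canonical $\mathcal L^0(\Sigma)$-linear surjection $\bar\Gamma(\mathbf E)\twoheadrightarrow\mathscr M$ whose kernel consists of $\mu$-a.e.\ vanishing sections, so that $\mathscr M\cong\bar\Gamma(\mathbf E)_\mu$. Each $v\in\mathscr M$ then admits a concrete representative $s\in\bar\Gamma(\mathbf E)$, and $|v|\in L^0(\mu)$ is the $\mu$-a.e.\ class of the measurable function $x\mapsto\|s(x)\|_{\mathbf E_x}\in\mathcal L^0(\Sigma)$.

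With the bundle in hand I would define the fibrewise data as follows. Up to modifying on a $\mathfrak q$-null set we may assume $\mu_q\ll\mu$ for every $q\in Q$, so $\mathscr M_q\coloneqq\bar\Gamma(\mathbf E)_{\mu_q}$ is a well-defined $L^0(\mu_q)$-Banach $L^0(\mu_q)$-module. Set $\Phi\coloneqq{\sf i}_{\mathfrak q}(L^0(\mu))=L^0(\mathfrak q;L^0(\mu_\star))$; since ${\sf j}_{\mathfrak q}(\Phi)=L^0(\mu)$ already, $\mathcal G_{\mathfrak q}(\Phi)$ is trivially dense in $L^0(\mu)$. Finally let $\Theta$ be the image in $\mathscr S_{\mathfrak q}(\mathscr M_\star)$ of the map $\bar\Gamma(\mathbf E)\ni s\mapsto\pi_{\mathfrak q}\bigl(q\mapsto[s]_{\mu_q}\bigr)$. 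The requirements of Definition \ref{def:Banach_module_bundle} are direct: $\Theta$ is a $\Phi$-submodule of $\mathscr S_{\mathfrak q}(\mathscr M_\star)$ via the pointwise $\mathcal L^0(\Sigma)$-action on $\bar\Gamma(\mathbf E)$, and for $v\in\Theta$ coming from $s$ one has $|v_q|=[\|s\|]_{\mu_q}$, so the global representative $\|s\|\in\mathcal L^0(\Sigma)$ witnesses $|v_\star|\in L^0(\mathfrak q;L^0(\mu_\star))$.

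The isomorphism would then be established via the natural map
\[
T\colon\mathscr M\longrightarrow\int\mathscr M_q\,\d\mathfrak q(q),\qquad T([s]_\mu)\coloneqq\pi_{\mathfrak q}\bigl(q\mapsto[s]_{\mu_q}\bigr),
\]
which is well-defined because $\mu$-a.e.\ vanishing of $s$ forces $\mu_q$-a.e.\ vanishing for $\mathfrak q$-a.e.\ $q$. Directly computing gives $|Tv|_\star={\sf i}_{\mathfrak q}(|v|)$, whence $|Tv|={\sf j}_{\mathfrak q}({\sf i}_{\mathfrak q}(|v|))=|v|$; thus $T$ preserves the pointwise norm, and is linear and $L^0(\mu)$-linear by compatibility of the $\mathcal L^0(\Sigma)$-actions. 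Its image is exactly $\Theta$, which is dense in $\int\mathscr M_q\,\d\mathfrak q(q)$ by definition; since $T$ is an isometry and $\mathscr M$ is complete, $T(\mathscr M)$ is closed in the completion, and a closed dense subspace is the whole space, so $T$ is an isomorphism of $L^0(\mu)$-Banach $L^0(\mu)$-modules.

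The main obstacle is the first step: one must invoke \cite{DMLP21} in precisely the form needed here, namely producing a bundle $\mathbf E$ whose sections $\bar\Gamma(\mathbf E)$ carry pointwise operations and a genuinely measurable pointwise norm on $\X$, and checking that the quotient by each absolutely continuous $\mu_q$ inherits an $L^0(\mu_q)$-Banach $L^0(\mu_q)$-module structure (i.e.\ completeness survives the further quotient). Once this representation is in place, the construction of $(\mathscr M_\star,\Theta,\Phi)$ and the verification that $T$ is an isomorphism are essentially book-keeping on top of the already-established isomorphism ${\sf i}_{\mathfrak q}\colon L^0(\mu)\to L^0(\mathfrak q;L^0(\mu_\star))$.
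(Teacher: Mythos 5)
Your proposal is correct and follows essentially the same route as the paper: both invoke the representation theorem from \cite{DMLP21} (specifically \cite[Theorem~4.13]{DMLP21}) to realise $\mathscr M$ as $\Gamma_\mu(\mathbf E)$ for a separable Banach $\B$-bundle $\mathbf E$, define $\mathscr M_q\coloneqq\Gamma_{\mu_q}(\mathbf E)$, take $\Phi=L^0(\mathfrak q;L^0(\mu_\star))$ and $\Theta$ the image of $\bar\Gamma(\mathbf E)$ under $\bar v\mapsto\pi_{\mathfrak q}(q\mapsto[\bar v]_{\mu_q})$, and then exhibit the pointwise-norm-preserving map from $\mathscr M$ onto $\Theta$. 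The only cosmetic difference is how you finish: you argue that the isometric image of the complete $\mathscr M$ is closed and dense, hence all of the completion, whereas the paper concludes directly that the surjective norm-preserving map $\rm J$ transports completeness onto $\Theta$ so that $\Theta$ already equals $\int\mathscr M_q\,\d\mathfrak q(q)$; these are the same argument phrased differently. (One small simplification you can make: there is no need to reduce to $\mu_q\ll\mu$ — the paper stresses that sections in $\bar\Gamma(\mathbf E)$ are \emph{everywhere} defined precisely so that the quotients $\Gamma_{\mu_q}(\mathbf E)$ make sense without any absolute continuity hypothesis.)
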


\begin{proof}
  Let $\B$ be a universal separable Banach space. Then \cite[Theorem 4.13]{DMLP21} ensures that there exists a separable Banach $\B$-bundle $\bf E$, such that $\Gamma_\mu({\bf E})\cong \mathscr{M}$, where $\Gamma_\mu({\bf E})$ is the quotient space of the space $\bar\Gamma(\bf E)$ of all measurable sections of $\bf E$, under $\mu$-a.e.\ equivalence relation. Recall that the elements of $\bar\Gamma(\bf E)$ are everywhere defined, so that for any $q\in Q$ we can consider an $L^0(\mu_q)$-Banach $L^0(\mu_q)$-module $\Gamma_{\mu_q}(\bf E)$. Denote the projections on quotient spaces with $\pi_\mu:\bar\Gamma({\bf E})\rightarrow \Gamma_\mu({\bf E})$ and $\pi_q:\bar\Gamma({\bf E})\rightarrow \Gamma_{\mu_q}({\bf E})$, for all $q\in Q$. Define $\mathscr{M}_q:=\Gamma_{\mu_q}({\bf E})$, $q\in Q$ and 
  \[\bar\Theta:=\{Q\ni q\mapsto \pi_q(\bar v)\in\mathscr{M}_{q}\,|\,\bar v\in\bar\Gamma({\bf E})\},\quad \Theta:=\pi_\q(\bar\Theta).\]
  Further, let $\Phi:=L^0(\q;L^0(\mu_\star))$. Then $\mathcal{G}_\q(\Phi)=L^0(\mu)$, $\Theta$ is a vector subspace of $\mathscr{S}_\q(\mathscr{M}_\star)$ and a module over $\Phi$. Denote with $v$ the $\q$-a.e.\ defined equivalence class of the map $q\mapsto \pi_q(\bar v)$, then  
  \[|v_\star|={\sf i}_\q\left(|\pi_\mu(\bar v)|\right)\in L^0(\q;L^0(\mu_\star)), \quad\text{for every }\bar v\in\bar\Gamma({\bf E}),\]
  since $|\pi_\mu(\bar v)|$ and $|\pi_q(\bar v)|$ are $\mu$-a.e. and $\mu_q$-a.e. equivalence classes of the map $\X\ni x\mapsto \|\bar v(x)\|_{\B}\in \R$, respectively. Hence, $(\mathscr{M}_\star,\Theta,\Phi)$ is a Banach module bundle consistent with $q\mapsto\mu_q$. Moreover, we obtained that $\Theta$ is an $L^0(\mu)$-normed $L^0(\mu)$-module.
  Further, we define 
  \begin{equation*}
      {\rm J}(\pi_\mu(\bar v)):=\int \pi_q(\bar v)\,\d\q(q),\quad \text{for every }\bar v\in\bar\Gamma({\bf E)}.
  \end{equation*}
The resulting operator ${\rm J}:\Gamma_\mu({\bf E})\rightarrow \Theta$ is an isomorphism of $L^0(\mu)$-normed $L^0(\mu)$-modules. Let us verify this claim. Take $\bar v,\tilde v\in\bar\Gamma(\bf E)$, such that $\pi_\mu(\bar v)=\pi_\mu(\tilde v)$, which means that $\bar v(x)=\tilde v(x)$ $\mu$-a.e.\ on $\X$. Then, since $\mu=\int\mu_q\,\d\q(q)$, follows that $\bar v(x)=\tilde v(x)$ $\mu_q$-a.e. on $\X$ for $\q$-a.e. $q\in Q$, whence $\pi_q(\bar v)=\pi_q(\tilde v)$ for $\q$-a.e. $q\in Q$ and ${\rm J}(\pi_\mu(\bar v))={\rm J}(\pi_\mu(\tilde v))$, so ${\rm J}$ is well-defined. It is clear that $\rm J$ is $L^0(\mu)$-linear and surjective. Furthermore, 
\begin{equation*}
    |{\rm J}(\pi_\mu(\bar v))|={\sf j}_\q(|{\rm J}(\pi_\mu(\bar v))_\star|)=|\pi_\mu(\bar v)|,\quad \text{for every }\bar v\in\bar \Gamma(\bf E), 
\end{equation*}
whence $\rm J$ preserves pointwise norm and thus it is an isomorphism. Thus, we conclude that $\Theta$ is an $L^0(\mu)$-Banach $L^0(\mu)$ module, which coincides with $\int\mathscr{M}_q\,\d\q(q)$.
\end{proof}

Another result in the same direction, which we will need in Section \ref{sec:integration_vf_BV}, is the following:
\begin{theorem}\label{thm:tech_quotient}
Let \((\X,\sfd)\) be a complete, separable metric space. Let \(\nu\) be a boundedly-finite, submodular
outer measure on \(\X\). Let \((Q,\mathcal Q,\mathfrak q)\) be a measure space. Let \(q\mapsto\mu_q\)
be a disjoint measure-valued map from \(Q\) to \(\X\) with \(\mu_q\ll\nu\) for every \(q\in Q\), and
set \(\mu\coloneqq\int\mu_q\,\d\mathfrak q(q)\ll\nu\). Let \(\mathscr M\) be an \(L^0(\nu)\)-Banach \(L^0(\nu)\)-module.
Fix a generating subalgebra \(\mathscr A\) of \(L^0(\nu)\), and a vector subspace \(\mathscr V\) of \(\mathscr M\)
that is also an \(\mathscr A\)-module and generates \(\mathscr M\). Let us define
\[\begin{split}
\Theta&\coloneqq\big\{[v]_{\mu_\star}\in\mathscr S_{\mathfrak q}(\mathscr M_{\mu_\star})\;\big|\;v\in\mathscr V\big\},\\
\Phi&\coloneqq\big\{[f]_{\mu_\star}\in L^0(\mathfrak q;L^0(\mu_\star))\;\big|\;f\in\mathscr A\big\}.
\end{split}\]
Then \((\mathscr M_{\mu_\star},\Theta,\Phi)\) is a Banach module bundle consistent with \(q\mapsto\mu_q\). Moreover, it holds that
\[
\mathscr M_\mu\cong\int\mathscr M_{\mu_q}\,\d\mathfrak q(q),
\]
the canonical isomorphism \({\rm I}\colon\mathscr M_\mu\to\int\mathscr M_{\mu_q}\,\d\mathfrak q(q)\) being the unique
homomorphism satisfying
\[
{\rm I}([v]_\mu)=\int[v]_{\mu_q}\,\d\mathfrak q(q)\quad\text{ for every }v\in\mathscr V.
\]
\end{theorem}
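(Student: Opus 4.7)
The plan is in three parts: first, verify that $(\mathscr M_{\mu_\star},\Theta,\Phi)$ satisfies the axioms of Definition \ref{def:Banach_module_bundle}; second, define the candidate homomorphism ${\rm I}$ by ${\rm I}([v]_\mu):=\int[v]_{\mu_q}\,\d\mathfrak q(q)$ on the generating family $\{[v]_\mu:v\in\mathscr V\}$ and extend by $L^0(\mu)$-linearity and continuity; third, check that ${\rm I}$ is an isometric isomorphism.

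For the bundle axioms, each $\mathscr M_{\mu_q}$ is an $L^0(\mu_q)$-Banach $L^0(\mu_q)$-module by the quotient construction \eqref{eq:def_quot_M}. The subalgebra property of $\Phi$ follows because $\mathscr A\ni f\mapsto[f]_{\mu_\star}$ is an algebra homomorphism, and one has ${\sf j}_\mathfrak q(\Phi)=\pi_\mu(\mathscr A)$. Since $\mathscr A$ generates $L^0(\nu)$ and $\pi_\mu\colon L^0(\nu)\to L^0(\mu)$ is a continuous algebra homomorphism, $\mathcal G(\pi_\mu(\mathscr A))$ is dense in $L^0(\mu)$; the density of the smaller family $\mathcal G_\mathfrak q(\Phi)$, whose partitions must come from $\mathcal Q$ via $\{E_q\}_{q\in Q}$, is the main technical point and will rely on the fact that $\mu$ is concentrated on $\bigcup_{q\in Q}E_q$ so that arbitrary $\Sigma$-partitions can be realized, modulo $\mu$-null sets, as $\mathcal Q$-induced ones. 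As for $\Theta$, it is a $\Phi$-module because $\mathscr V$ is an $\mathscr A$-module, and $|v_\star|\in L^0(\mathfrak q;L^0(\mu_\star))$ for $v\in\mathscr V$ is witnessed by any Borel representative of $|v|\in L^0(\nu)$ via Definition \ref{def:L0(q)}.

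Well-definedness of ${\rm I}$ on the generating family uses that $\mu=\int\mu_q\,\d\mathfrak q(q)$, so $v=w$ $\mu$-a.e.\ if and only if $v=w$ $\mu_q$-a.e.\ for $\mathfrak q$-a.e.\ $q$. The extension to $\mathcal G(\pi_\mu(\mathscr V))$---which is dense in $\mathscr M_\mu$ since $\mathscr V$ generates $\mathscr M$ and $\pi_\mu$ is continuous---is done by $L^0(\mu)$-linearity. The key identity $|{\rm I}([v]_\mu)|={\sf j}_\mathfrak q(|[v]_{\mu_\star}|)=|[v]_\mu|$ for $v\in\mathscr V$ holds because the Borel function $|v|$ represents both sides, and by disjointness of the partitions this identity extends to $\mathcal G(\pi_\mu(\mathscr V))$. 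Consequently ${\rm I}$ extends uniquely to an isometric $L^0(\mu)$-linear map defined on all of $\mathscr M_\mu$. Surjectivity follows because the image contains $\Theta$, which is dense in $\int\mathscr M_{\mu_q}\,\d\mathfrak q(q)$ by definition of the integral as completion, and the image is complete (hence closed). Uniqueness of ${\rm I}$ is clear since $\{[v]_\mu:v\in\mathscr V\}$ generates $\mathscr M_\mu$.

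The main obstacle will be verifying the density of $\mathcal G_\mathfrak q(\Phi)$ in $L^0(\mu)$, which is not an automatic consequence of $\mathscr A$ generating $L^0(\nu)$: one must convert $\mathcal G(\pi_\mu(\mathscr A))$-approximations of elements of $L^0(\mu)$ into $\mathcal G_\mathfrak q(\Phi)$-approximations by exploiting both the disjoint structure $\{E_q\}_{q\in Q}$ of the measure-valued map and the closure of $\mathscr A$ under products and sums.
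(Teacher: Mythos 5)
Your proof follows the same overall route as the paper's: verify the bundle axioms, define ${\rm I}$ on $\pi_\mu(\mathscr V)$ by ${\rm I}([v]_\mu)=\int[v]_{\mu_q}\,\d\mathfrak q(q)$, check well-definedness (using $\mu=\int\mu_q\,\d\mathfrak q(q)$) and the pointwise-norm identity, and extend uniquely to a norm-preserving isomorphism; the well-definedness, surjectivity, and uniqueness steps all agree with the paper's argument.

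Where you diverge — and where your plan would not go through — is the verification of condition b) of Definition \ref{def:Banach_module_bundle}. You propose to realize arbitrary $\Sigma$-partitions of $\X$, modulo $\mu$-null sets, as $\mathcal Q$-induced ones. This is false in general: a $\mathcal Q$-induced partition of $\X$ consists of unions of entire fibers $E_q$ and is therefore typically far coarser than a generic Borel partition. For instance, take $\X=[0,1]^2$ with $\mu=\mathscr L^2$, $E_q=\{q\}\times[0,1]$, and $\mathcal Q$ the Borel $\sigma$-algebra of $Q=[0,1]$: every $\mathcal Q$-induced partition of $\X$ is of the form $\{S_n\times[0,1]\}_n$ and never separates two points in the same vertical segment, so a set such as $[0,1]\times[0,\tfrac12]$ can never be resolved. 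The paper does not attempt any such conversion. Its argument is the simple observation that, via the trivial $\mathcal Q$-partition $S_1=Q$, one already has $\pi_\mu(\mathscr A)={\sf j}_\mathfrak q(\Phi)\subseteq\mathcal G_\mathfrak q(\Phi)$; since $\pi_\mu(\mathscr A)$ is a generating subalgebra of $L^0(\mu)$ (being the continuous image of the generating subalgebra $\mathscr A$ of $L^0(\nu)$), the larger subalgebra $\mathcal G_\mathfrak q(\Phi)$ is generating as well, which gives $\widehat{{\sf j}_\mathfrak q(\Phi)}=L^0(\mu)$ — precisely what the completion underlying Definition \ref{def:integral_of_bundle} needs. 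Note that this inclusion argument yields the generating property of $\mathcal G_\mathfrak q(\Phi)$, not its literal topological density in $L^0(\mu)$; in the toy example above with $\mathscr A=\R_\nu$ (which certainly generates $L^0(\nu)$), the functions in $\mathcal G_\mathfrak q(\Phi)$ depend only on the first coordinate and are nowhere dense, so the literal density cannot follow from the stated hypotheses on $\mathscr A$ — the direct inclusion $\pi_\mu(\mathscr A)\subseteq\mathcal G_\mathfrak q(\Phi)$ is what the proof actually rests on, and you should replace your partition-conversion plan with it.
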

\begin{proof}
Since \(\pi_\mu(\mathscr A)\) is a generating subalgebra of \(L^0(\mu)\) and \(\pi_\mu(\mathscr A)\subseteq\mathcal G_{\mathfrak q}(\Phi)\),
we deduce that \(\mathcal G_{\mathfrak q}(\Phi)\) is dense in \(L^0(\mu)\). Moreover, \(\Theta\) is a vector subspace of
\(\mathscr S_{\mathfrak q}(\mathscr M_{\mu_\star})\), is a module over \(\Phi\), and
\begin{equation}\label{eq:tech_aux_quotient}
|[v]_{\mu_\star}|={\sf i}_{\mathfrak q}(|[v]_\mu|)\in L^0(\mathfrak q;L^0(\mu_\star))\quad\text{ for every }[v]_{\mu_\star}\in\Theta.
\end{equation}
Therefore, \((\mathscr M_{\mu_\star},\Theta,\Phi)\) is a Banach module bundle consistent with \(q\mapsto\mu_q\).
Moreover, we define
\[
\tilde{\rm I}([v]_\mu)\coloneqq\int[v]_{\mu_q}\,\d\mathfrak q(q)=[v]_{\mu_\star}\in\Theta
\subseteq\int\mathscr M_{\mu_q}\,\d\mathfrak q(q)\quad\text{ for every }v\in\mathscr V.
\]
The resulting operator \(\tilde{\rm I}\colon\pi_\mu(\mathscr V)\to\Theta\) is well-defined, \(\pi_\mu(\mathscr A)\)-linear,
and satisfying
\[
|\tilde{\rm I}([v]_\mu)|={\sf j}_{\mathfrak q}(|[v]_{\mu_\star}|)\overset{\eqref{eq:tech_aux_quotient}}=|[v]_\mu|
\quad\text{ for every }v\in\mathscr V.
\]
Since \(\pi_\mu(\mathscr A)\) generates \(L^0(\mu)\), there exists a unique homomorphism
\({\rm I}\colon\mathscr M_\mu\to\int\mathscr M_{\mu_q}\,\d\mathfrak q(q)\) that extends \(\tilde{\rm I}\)
and preserves the pointwise norm. Given that \(\Theta=\tilde{\rm I}(\pi_\mu(\mathscr V))\subseteq{\rm I}(\mathscr M_\mu)\)
generates \(\int\mathscr M_{\mu_q}\,\d\mathfrak q(q)\), we can finally conclude that \({\rm I}\) is an isomorphism,
thus the statement is proved.
\end{proof}
\section{Applications to vector calculus on \texorpdfstring{\(\sf RCD\)}{RCD} spaces}
\label{sec:applications}
This section deals with applications of the language developed so far to the coarea formula for BV functions and to the 1D-localisation, in both cases in the setting of \(\sf RCD\) spaces.
\subsection{Vector fields on the superlevel sets of a BV function}
\label{sec:integration_vf_BV}
First, a preliminary lemma:
\begin{lemma}\label{lem:property_bdry_superlevel}
Let \((\X,\sfd,\mm)\) be a PI space and \(f\in BV(\X)\). Then it holds that
\begin{equation}\label{eq:property_bdry_superlevel}
\partial^*\{\bar f>t\}\cap(\X_f\setminus J_f)\subseteq\{\bar f=t\}\quad\text{ for every }t\in\R.
\end{equation}
In particular, \(t\mapsto P(\{\bar f>t\},\cdot)|_{\X_f\setminus J_f}\) is a disjoint measure-valued
map from \(\R\) to \(\X\).
\end{lemma}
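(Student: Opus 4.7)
The plan is to prove the inclusion \eqref{eq:property_bdry_superlevel} by contraposition: fixing $x\in\X_f\setminus J_f$ with $\bar f(x)\neq t$, I will show that $x\notin\partial^*\{\bar f>t\}$. Since $x\in\X_f$ and $x\notin J_f$, one has $f^\wedge(x)=f^\vee(x)=\bar f(x)\in\R$, which reduces the argument to two symmetric cases.

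Suppose first that $\bar f(x)>t$, equivalently $f^\wedge(x)>t$. The alternative characterisation $f^\wedge(x)=\sup\{s\in\R:\Theta^*(\{f\leq s\},x)=0\}$ given in the remark then guarantees some $s>t$ belonging to the displayed set, so $\Theta^*(\{f\leq s\},x)=0$; monotonicity of $\Theta^*$ combined with $\{f\leq t\}\subseteq\{f\leq s\}$ yields $\Theta^*(\{f\leq t\},x)=0$. Since $\bar f=f$ holds $\mm$-a.e.\ on $\X$ (and $\mm(\X\setminus\X_f)=0$), the set $\X\setminus\{\bar f>t\}$ agrees with $\{f\leq t\}$ up to an $\mm$-null set, and $\Theta^*$ is invariant under such modifications; hence $\Theta^*(\X\setminus\{\bar f>t\},x)=0$ and $x\notin\partial^*\{\bar f>t\}$. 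The case $\bar f(x)<t$ is entirely symmetric, using $f^\vee(x)=\inf\{s\in\R:\Theta^*(\{f\geq s\},x)=0\}<t$ to produce some $s<t$ with $\Theta^*(\{f\geq s\},x)=0$, and concluding $\Theta^*(\{\bar f>t\},x)=0$.

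For the ``in particular'' claim, the three conditions of Definition \ref{def:measure-valuedmap} are verified as follows. Since $\{\bar f>t\}$ and $\{f>t\}$ differ by an $\mm$-null set and perimeter measures depend only on the $\mm$-a.e.\ class, Theorem \ref{thm:coarea} yields both measurability of $t\mapsto P(\{\bar f>t\},E)$ for each Borel $E$ and the identity $\int P(\{\bar f>t\},\cdot)\,\d t=|Df|$, so the superposition of $\mu_t\coloneqq P(\{\bar f>t\},\cdot)|_{\X_f\setminus J_f}$ is dominated by the finite measure $|Df|$ and is therefore $\sigma$-finite. Setting $E_t\coloneqq\{\bar f=t\}\cap(\X_f\setminus J_f)$, these are pairwise disjoint Borel sets (Borel since $\bar f$ is Borel on $\X_f$ and $J_f$ is Borel); the inclusion \eqref{eq:property_bdry_superlevel} combined with the concentration of $P(\{\bar f>t\},\cdot)$ on $\partial^*\{\bar f>t\}$ gives $\mu_t(\X\setminus E_t)=0$; and for every Borel $A\subseteq\R$ the union $\bigcup_{t\in A}E_t=\bar f^{-1}(A)\cap(\X_f\setminus J_f)$ is Borel.

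The only mild obstacle throughout is the bookkeeping between $f$ and $\bar f$ and between $\X$ and $\X_f$, which is smoothed out by the $\mm$-negligibility of $\X\setminus\X_f$ and the invariance of the upper density $\Theta^*$ under $\mm$-null modifications; the rest is a direct unpacking of the definitions of $f^\wedge$, $f^\vee$, and $\partial^*$.
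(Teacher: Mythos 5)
Your proof is correct and follows essentially the same approach as the paper. The paper proves the inclusion directly (taking $x\in\partial^*\{\bar f>t\}\cap(\X_f\setminus J_f)$ and deriving $f^\wedge(x)\leq t\leq f^\vee(x)$ from the positivity of the two upper densities), whereas you establish the contrapositive; both hinge on the same facts, namely the definitions of $f^\wedge$, $f^\vee$, $\partial^*$, the monotonicity of $\Theta^*$, and its invariance under $\mm$-null modifications, and your more detailed verification of the disjointness conditions is also consistent with what the paper leaves implicit.
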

\begin{proof}
Let \(x\in\partial^*\{\bar f>t\}\cap(\X_f\setminus J_f)\) be given. Since \(\Theta^*(\{\bar f>t\},x)=\Theta^*(\{f>t\},x)>0\), we deduce
that \(t\leq f^\vee(x)\). Since \(\Theta^*(\{\bar f\leq t\},x)=\Theta^*(\{f\leq t\},x)=\Theta^*(\X\setminus\{f>t\},x)>0\), we deduce that
\(t\geq f^\wedge(x)\). Since \(x\in\X_f\setminus J_f\) ensures that \(f^\vee(x)=f^\wedge(x)=\bar f(x)\),
we can conclude that \(t=\bar f(x)\), proving \eqref{eq:property_bdry_superlevel}. It follows that
\begin{equation}\label{eq:property_bdry_superlevel_aux}
P(\{\bar f>t\},\cdot)|_{\X_f\setminus J_f}\quad\text{ is concentrated on }\{\bar f=t\}.
\end{equation}
Now notice that we have \(\{\bar f=t\}\cap\{\bar f=s\}=\varnothing\) whenever \(t,s\in\R\) and \(t\neq s\).
This observation, in combination with Theorem \ref{thm:coarea} and \eqref{eq:property_bdry_superlevel_aux},
implies that \(t\mapsto P(\{\bar f>t\},\cdot)|_{\X_f\setminus J_f}\) is a disjoint measure-valued map.
Therefore, the proof is complete.
\end{proof}
\begin{remark}\label{rmk:def_R_sqcup_N}{\rm
In the next result, as indexing family we will consider the measure space \((Q,\mathcal Q,\mathfrak q)\)
given as follows: \(Q\coloneqq\R\sqcup\N\), we declare that a set \(E\subseteq Q\) belongs to \(\mathcal Q\)
if and only if \(E\cap\R\) is a Borel subset of \(\R\) (in other words, \(\mathcal Q\) is the pushforward of
the Borel \(\sigma\)-algebra of \(\R\) under the inclusion map \(\iota\colon\R\hookrightarrow Q\)), and
\[
\mathfrak q\coloneqq\iota_\#\mathscr L^1+\sum_{n\in\N}\delta_n,
\]
where \(\mathscr L^1\) stands for the one-dimensional Lebesgue measure on \(\R\).
\fr}\end{remark}
\begin{proposition}\label{prop:coarea_disj}
Let \((\X,\sfd,\mm)\) be an isotropic PI space and \(f\in BV(\X)\). Fix a countable Borel partition \((\Gamma_n)_n\)
of \(J_f\) with the following property: for any \(n\in\N\), there is \(t_n\in\R\) such that \(\{\bar f>t_n\}\) is of
finite perimeter and \(\Gamma_n\subseteq\partial^*\{\bar f>t_n\}\). Let \((Q,\mathcal Q,\mathfrak q)\) be
as in Remark \ref{rmk:def_R_sqcup_N} and define
\[
\mu^f_q\coloneqq\left\{\begin{array}{ll}
P(\{\bar f>t\},\cdot)|_{\X_f\setminus J_f}\\
(f^\vee-f^\wedge)P(\{\bar f>t_n\},\cdot)|_{\Gamma_n}
\end{array}\quad\begin{array}{ll}
\text{ if }q=t\text{ for some }t\in\R,\\
\text{ if }q=n\text{ for some }n\in\N.
\end{array}\right.
\]
Then \(q\mapsto\mu^f_q\) is a disjoint measure-valued map from \(Q\) to \(\X\). Moreover, it holds that
\[
|Df|=\int\mu^f_q\,\d\mathfrak q(q).
\]
\end{proposition}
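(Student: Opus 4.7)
The plan is to verify the two assertions in turn: disjointness of the measure-valued map and the identity $|Df|=\int\mu^f_q\,\d\mathfrak q(q)$. The natural approach is to split $\X$ into the part $\X_f\setminus J_f$ (where the continuous behaviour of $f$ lives) and $J_f$ (where the jump part is concentrated), and then treat the continuous parameter $t\in\R$ and the discrete parameter $n\in\N$ separately.

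For the disjointness, I would exhibit the pairwise disjoint sets $\{E_q\}_{q\in Q}$ carrying the measures $\mu^f_q$ by taking
\[
E_t\coloneqq\{\bar f=t\}\cap(\X_f\setminus J_f)\text{ for }t\in\R,\qquad E_n\coloneqq\Gamma_n\text{ for }n\in\N.
\]
The sets $E_t$ are pairwise disjoint because $\bar f$ is single-valued, the sets $E_n$ are pairwise disjoint since $(\Gamma_n)_n$ is a partition of $J_f$, and the two families are separated since $E_t\subseteq\X_f\setminus J_f$ while $E_n\subseteq J_f$. Lemma \ref{lem:property_bdry_superlevel} gives $\mu^f_t(\X\setminus E_t)=0$, and by construction $\mu^f_n(\X\setminus\Gamma_n)=0$. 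Measurability in $q$ of $q\mapsto\mu^f_q(E)$ follows from Theorem \ref{thm:coarea} for the real parameter (which states that $t\mapsto P(\{f>t\},\cdot)$ is a measurable measure-valued map) and is trivial for the countable part; finite additivity plus $\mathfrak q$ being a sum of $\mathscr L^1$ and a counting measure gives the required $\sigma$-algebra condition $\bigcup_{q\in A}E_q\in\Sigma$ for every $A\in\mathcal Q$, via the Borel measurability of $\bar f$. The $\sigma$-finiteness of the total measure will be automatic once the second assertion is established, since $|Df|$ is finite.

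For the identity $|Df|=\int\mu^f_q\,\d\mathfrak q(q)$, I would decompose
\[
|Df|=|Df|_{\X_f\setminus J_f}+|Df|_{J_f},
\]
recalling that $|Df|$ is concentrated on $\X_f$. For the first summand, Theorem \ref{thm:coarea} and the fact that $\{f>t\}=\{\bar f>t\}$ up to $\mm$-null sets give
\[
|Df|_{\X_f\setminus J_f}=\int_\R P(\{\bar f>t\},\cdot)\big|_{\X_f\setminus J_f}\,\d t=\int_\R\mu^f_t\,\d t,
\]
which is the $\R$-contribution to $\int\mu^f_q\,\d\mathfrak q(q)$ by definition of $\mathfrak q$. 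For the second summand, I would use isotropy: applying \eqref{eq:jump_isotr} to $G=\{\bar f>t_n\}$ and intersecting with the partition piece $\Gamma_n\subseteq\partial^*\{\bar f>t_n\}$ yields
\[
|Df|_{\Gamma_n}=(f^\vee-f^\wedge)P(\{\bar f>t_n\},\cdot)\big|_{\Gamma_n}=\mu^f_n.
\]
Summing over $n$ (using that $(\Gamma_n)_n$ partitions $J_f$) produces $|Df|_{J_f}=\sum_{n\in\N}\mu^f_n$, which is precisely the $\N$-contribution to $\int\mu^f_q\,\d\mathfrak q(q)$.

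The main technical point I expect to need care with is the passage from the coarea identity written with $\{f>t\}$ to the corresponding statement with $\{\bar f>t\}$ restricted to $\X_f\setminus J_f$: one has to verify that the jump set $J_f$ and the exceptional set $\X\setminus\X_f$ are $|Df|$-negligible in the complement direction too, so that restricting each perimeter measure $P(\{\bar f>t\},\cdot)$ to $\X_f\setminus J_f$ does not destroy the coarea identity on that subset. This is handled by $\mm(\X\setminus\X_f)=|Df|(\X\setminus\X_f)=0$ together with the fact that, for $t\in\R\setminus\{t_n:n\in\N\}$ (i.e.\ Lebesgue-a.e.\ $t$), $P(\{\bar f>t\},\cdot)$ charges no piece $\Gamma_n$, so that $P(\{\bar f>t\},\cdot)|_{J_f}=0$ for $\mathscr L^1$-a.e.\ $t$; combining this with Lemma \ref{lem:property_bdry_superlevel} shows $P(\{\bar f>t\},\cdot)=\mu^f_t$ for a.e.\ $t$, and the coarea formula yields the conclusion.
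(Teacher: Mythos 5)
Your main argument follows essentially the same route as the paper: restrict the coarea formula to $\X_f\setminus J_f$ to handle the continuous parameter, apply the isotropy identity \eqref{eq:jump_isotr} with $G=\{\bar f>t_n\}$ piece-by-piece on the partition $(\Gamma_n)_n$ to handle the jump part, and invoke Lemma \ref{lem:property_bdry_superlevel} for disjointness. The extra detail you supply for the disjointness check (exhibiting explicit carrier sets $E_t=\{\bar f=t\}\cap(\X_f\setminus J_f)$ and $E_n=\Gamma_n$, and checking measurability of $q\mapsto\mu^f_q(E)$) is correct and a useful elaboration of what the paper leaves implicit.

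However, your concluding technical paragraph contains a false claim. You assert that $P(\{\bar f>t\},\cdot)|_{J_f}=0$ for $\mathscr L^1$-a.e.\ $t$, on the grounds that $P(\{\bar f>t\},\cdot)$ charges no $\Gamma_n$ for $t\neq t_n$. This is not true in general: a point $x\in\Gamma_n$ with $f^\wedge(x)<f^\vee(x)$ lies in $\partial^*\{\bar f>t\}$ for \emph{every} $t\in(f^\wedge(x),f^\vee(x))$, not just for $t=t_n$, so these perimeter measures can charge $\Gamma_n$ on an interval of $t$'s. Indeed, if your claim held, restricting the coarea identity to $J_f$ would give $|Df||_{J_f}=\int_\R P(\{\bar f>t\},\cdot)|_{J_f}\,\d t=0$, forcing the jump part of $|Df|$ to vanish for every $f\in BV(\X)$, which is absurd. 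Fortunately this claim is not load-bearing: since $|Df|=\int_\R P(\{\bar f>t\},\cdot)\,\d t$ is an identity of measures, one may simply evaluate both sides on $E\cap(\X_f\setminus J_f)$ for any Borel $E$, which immediately yields $|Df||_{\X_f\setminus J_f}=\int_\R\mu^f_t\,\d t$. The worry you raise about whether restriction "destroys the coarea identity on that subset" thus resolves itself trivially, and the dubious almost-everywhere statement should be dropped.
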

\begin{proof}
It readily follows from Lemma \ref{lem:property_bdry_superlevel} that \(q\mapsto\mu^f_q\) is a disjoint
measure-valued map from \(Q\) to \(\X\). Moreover, using Theorem \ref{thm:coarea} and \eqref{eq:jump_isotr}
we see that for any \(E\subseteq\X\) Borel it holds
\[\begin{split}
|Df|(E)&=|Df|(E\cap(\X_f\setminus J_f))+|Df|(E\cap J_f)\\
&=\int_\R P(\{\bar f>t\},E\cap(\X_f\setminus J_f))\,\d t+\sum_{n\in\N}\int_{E\cap\Gamma_n}(f^\vee-f^\wedge)\,\d P(\{\bar f>t_n\},\cdot)\\
&=\int\mu^f_t(E)\,\d\mathscr L^1(t)+\sum_{n\in\N}\mu^f_n(E)=\int\mu^f_q(E)\,\d\mathfrak q(q).
\end{split}\]
This proves that \(|Df|=\int\mu^f_q\,\d\mathfrak q(q)\), thus accordingly the statement is achieved.
\end{proof}
\begin{theorem}
\label{thm:disintegration_L0Df}
Let \((\X,\sfd,\mm)\) be an \({\sf RCD}(K,N)\) space, with \(K\in\R\) and \(N\in[1,\infty)\). Let \(f\in BV(\X)\) be given.
Let \(q\mapsto\mu^f_q\) be defined as in Proposition \ref{prop:coarea_disj}. Let us also define
\(\big(L^0_{\mu^f_\star}(T\X),\Theta_f,\Phi_f\big)\) as
\[\begin{split}
\Theta_f&\coloneqq\big\{[v]_{\mu^f_\star}\in\mathscr S_{\mathfrak q}\big(L^0_{\mu^f_\star}(T\X)\big)
\;\big|\;v\in{\rm Test}(T\X)\big\},\\
\Phi_f&\coloneqq\big\{[f]_{\mu^f_\star}\in L^0(\mathfrak q;L^0(\mu^f_\star))\;\big|\;f\in{\rm Test}(\X)\big\}.
\end{split}\]
Then \(\big(L^0_{\mu^f_\star}(T\X),\Theta_f,\Phi_f\big)\) is a Banach module bundle consistent with \(q\mapsto\mu^f_q\). Moreover,
\[
L^0_{|Df|}(T\X)\cong\int L^0_{\mu^f_q}(T\X)\,\d\mathfrak q(q),
\]
the isomorphism \({\rm I}_f\colon L^0_{|Df|}(T\X)\to\int L^0_{\mu^f_q}(T\X)\,\d\mathfrak q(q)\) being the unique
homomorphism satisfying
\[
{\rm I}_f\big([v]_{|Df|}\big)=\int[v]_{\mu^q_f}\,\d\mathfrak q(q)\quad\text{ for every }v\in{\rm Test}(T\X).
\]
\end{theorem}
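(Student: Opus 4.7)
The plan is to deduce Theorem \ref{thm:disintegration_L0Df} as a direct application of the abstract quotient result, Theorem \ref{thm:tech_quotient}. I would make the following choices: the background submodular outer measure will be the Sobolev capacity \(\nu \coloneqq {\rm Cap}\), the ambient Banach module will be the capacitary tangent module \(\mathscr M \coloneqq L^0_{\rm Cap}(T\X)\), the generating subalgebra will be \(\mathscr A \coloneqq {\rm Test}(\X)\) (viewed inside \(L^0({\rm Cap})\) via quasi-continuous representatives), and the generating submodule will be \(\mathscr V \coloneqq {\rm Test}(T\X)\). With the disjoint measure-valued map \(q \mapsto \mu^f_q\) supplied by Proposition \ref{prop:coarea_disj}, Theorem \ref{thm:tech_quotient} will then yield simultaneously that \((L^0_{\mu^f_\star}(T\X), \Theta_f, \Phi_f)\) is a Banach module bundle consistent with \(q \mapsto \mu^f_q\) and that there is a canonical isomorphism \(L^0_{\rm Cap}(T\X)_{|Df|} \cong \int L^0_{\rm Cap}(T\X)_{\mu^f_q}\,\d\mathfrak{q}(q)\), determined on test vector fields by the prescribed formula. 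Since by definition \(L^0_{\rm Cap}(T\X)_{|Df|} = L^0_{|Df|}(T\X)\) and \(L^0_{\rm Cap}(T\X)_{\mu^f_q} = L^0_{\mu^f_q}(T\X)\), this will produce exactly the stated conclusion.

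The first task is to check the measure-theoretic hypotheses of Theorem \ref{thm:tech_quotient}. By Proposition \ref{prop:coarea_disj}, \(q \mapsto \mu^f_q\) is a disjoint measure-valued map with \(|Df| = \int \mu^f_q\,\d\mathfrak{q}(q)\); and \(|Df| \ll {\rm Cap}\) is precisely the content of \eqref{eq:ac_Cap}. For the fibrewise absolute continuity \(\mu^f_q \ll {\rm Cap}\): when \(q = n \in \N\) the measure \(\mu^f_n\) is a bounded density against \(P(\{\bar f > t_n\},\cdot)\), which is \(\ll {\rm Cap}\) by \eqref{eq:ac_Cap}; the same argument applies to \(\mu^f_t\) for those \(t \in \R\) where \(\{\bar f > t\}\) has finite perimeter, and on the Lebesgue-negligible exceptional set of \(t\)'s one may harmlessly redefine \(\mu^f_t\) to be the zero measure without altering the integral.

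Next I would verify the module-theoretic hypotheses. That \(\mathscr A\) is a subalgebra of \(L^0({\rm Cap})\) generating it is precisely the lemma immediately preceding the capacitary tangent module theorem in the preliminaries. That \(\mathscr V = {\rm Test}(T\X)\) generates \(L^0_{\rm Cap}(T\X)\) is built into the construction of the capacitary tangent module. The only point that calls for a short verification is that \({\rm Test}(T\X)\) is a \({\rm Test}(\X)\)-module; this reduces to stability of \({\rm Test}(\X)\) under pointwise multiplication, which follows from the Leibniz identity \(\Delta(fg) = f\Delta g + g\Delta f + 2\langle \nabla f,\nabla g\rangle\) together with the fact that \(\langle\nabla f,\nabla g\rangle \in H^{1,2}(\X)\) for \(f,g \in {\rm Test}(\X)\), the remaining Lipschitz/boundedness properties being routine.

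I do not foresee any serious obstacle: the bulk of the work has been absorbed into Theorem \ref{thm:tech_quotient} on the abstract side and Proposition \ref{prop:coarea_disj} on the geometric side. The only mildly delicate point is ensuring that \(\mu^f_q \ll {\rm Cap}\) for every \(q \in Q\) rather than merely for \(\mathfrak{q}\)-a.e.\ \(q\), which as indicated is handled by a harmless redefinition on a \(\mathfrak{q}\)-null set. Once all the hypotheses are in place, the explicit prescription \({\rm I}_f([v]_{|Df|}) = \int [v]_{\mu^f_q}\,\d\mathfrak{q}(q)\) for \(v \in {\rm Test}(T\X)\), together with the uniqueness of \({\rm I}_f\), is immediate from the corresponding statement in Theorem \ref{thm:tech_quotient}.
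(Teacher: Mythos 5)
Your proof matches the paper's exactly: both deduce the theorem by applying Theorem \ref{thm:tech_quotient} with \(\nu={\rm Cap}\), \(\mathscr M=L^0_{\rm Cap}(T\X)\), \(\mathscr A={\rm Test}(\X)\), and \(\mathscr V={\rm Test}(T\X)\), using Proposition \ref{prop:coarea_disj} and \eqref{eq:ac_Cap} to supply the measure-theoretic hypotheses. Your extra remarks (redefining \(\mu^f_q\) on a \(\mathfrak q\)-null set to ensure \(\mu^f_q\ll{\rm Cap}\) for \emph{every} \(q\), and checking that \({\rm Test}(T\X)\) is a \({\rm Test}(\X)\)-module via the Leibniz identity) are correct clarifications of points the paper leaves implicit.
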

\begin{proof}
It is enough to show that we are in position to apply Theorem \ref{thm:tech_quotient} with suitable choices of the objects involved. Indeed, with the notation therein let $\nu:={\rm Cap}$, $\mu_q:=\mu_q^f$ and notice that $\mu_q^f \ll {\rm Cap}$, as a consequence of \eqref{eq:ac_Cap}.
Let $\mathscr{M}:=L^0_{\rm Cap}(T\X)$, \(\mathscr A\coloneqq{\rm Test}(\X)\), and \(\mathscr V\coloneqq{\rm Test}(T\X)\).
With these choices $\mathscr A$ is a generating subalgebra of $L^0({\rm Cap})$ and $\mathscr V$ is a vector subspace of $\mathscr{M}$ and a module over $\mathscr A$, which generates the $L^0(\rm Cap)$-Banach $L^0(\rm Cap)$-module $\mathscr{M}$.
\end{proof}
\subsection{Vector fields on the integral lines of a gradient}\label{sec:integral_vf_localization}
We recall here the theory of 1D-localisation, referring the reader to \cite{Cav14,CavMon15,CavMil16,CM20}. In particular, for the general construction we follow the presentation of \cite{CM20},
where it is not assumed that $\mm(\X)=1$. Let \((\X,\sfd,\mm)\) be an \({\sf RCD}(K,N)\) space, with \(K\in\R\) and \(N\in[1,\infty)\). Given any
\(1\)-Lipschitz function \(u\colon\X\to\R\), its \textbf{transport set} \(\mathcal T_u\) is defined as the Borel set
\[
\mathcal T_u\coloneqq\big\{x\in\X\;\big|\;|u(x)-u(y)|=\sfd(x,y)\text{ for some }y\in\X\setminus\{x\}\big\}.
\]
The transport set \(\mathcal T_u\) can be written (up to \(\mm\)-null sets) as the union of a family
\(\{\X_\alpha\}_{\alpha\in Q}\) of \textbf{transport rays} (see \cite[Theorem 5.5]{Cav14}). Each \(\X_\alpha\) is isometric to a closed real interval $I_\alpha$,
i.e.\ we can find a surjective isometry \(\gamma_\alpha\colon I_\alpha \to\X_\alpha\).
Given any \(\alpha,\beta\in Q\) with \(\alpha\neq\beta\), we also have that
\[
\gamma_\alpha( \overset{\circ}{I_\alpha})\cap\gamma_\beta(\overset{\circ}{I_\beta})=\varnothing.
\]
Moreover, one can find a \(\sigma\)-algebra \(\mathcal Q\) on \(Q\), a finite measure \(\mathfrak q\geq 0\) on
\((Q,\mathcal Q)\), and a disjoint measure-valued map \(\alpha\mapsto\mm_\alpha\) from \(Q\) to \(\X\) such that
\(\mm_\alpha(\X\setminus\X_\alpha)=0\) for every \(\alpha\in Q\) and
\[
\mm|_{\mathcal T_u}=\int\mm_\alpha\,\d\mathfrak q(\alpha).
\]
In addition, it holds that \(\mm_\alpha=h_\alpha\,\mathcal H^1|_{\X_\alpha}\) for some Borel
function \(h_\alpha\colon I_\alpha \to(0,+\infty)\) (where \(\mathcal H^1\) stands for the one-dimensional Hausdorff measure)
and the space \((\X_\alpha,\sfd,\mm_\alpha)\) is an \({\sf RCD}(K,N)\) space (see \cite[Theorem 4.2]{CavMon15}). In particular, the tangent module \(L^0(T\X_\alpha)\)
can be identified with \(L^0(\mm_\alpha)\) via the differential operator \(\d\gamma_\alpha\colon L^0(\mm_\alpha)\to L^0(T\X_\alpha)\)
of \(\gamma_\alpha\) (in the sense of \cite{GP16}).
\medskip

Given any \(f\colon\X\to\R\) Lipschitz and \(\alpha\in Q\), we define
\begin{equation}
\label{eq:definition_partialtilde}
\tilde\partial_\alpha f(x)\coloneqq\frac{\d}{\d t}(f\circ\gamma_\alpha)(t)\bigg|_{t=\gamma_\alpha^{-1}(x)}
\quad\text{ for }\mathcal H^1\text{-a.e.\ }x\in\X_\alpha.
\end{equation}
The resulting function \(\tilde\partial_\alpha f\colon\X_\alpha\to\R\) is an element of \(L^0(\mm_\alpha)\).
It also follows from the results of \cite[Section 4.1]{CM20} that \(\tilde\partial_\star f\in L^0(\mathfrak q;L^0(\mm_\star))\),
and \cite[Theorem 4.5]{CM20} ensures that
\begin{equation}\label{eq:ptwse_norm_partial_f}
\tilde\partial f\coloneqq{\sf i}_{\mathfrak q}^{-1}(\tilde\partial_\star f)=-\1_{\mathcal T_u}^\mm\langle\nabla f,\nabla u\rangle\in L^0(\mm|_{\mathcal T_u}).
\end{equation}
Indeed, the infinitesimal Hilbertianity assumption gives \(D^-f(-\nabla u)=D^+f(-\nabla u)=-\langle\nabla f,\nabla u\rangle\)
(cf.\ with \cite{Gigli12}). 
From the very definition in \eqref{eq:definition_partialtilde} and the fact that given $t,s \in I_\alpha$ with $s >t$, $u(\gamma_\alpha(t))-u(\gamma_\alpha(s))=|s-t|$, it follows that for $\mathfrak{q}$-a.e.\ $\alpha$, $\tilde{\partial}_\alpha u(x)=1$ for $\mathcal{H}^1$-a.e.\ $x \in \X_\alpha$. Thus
\begin{equation}
    \1_{T_u}^\mm|\nabla u|=|{\sf i}_{\mathfrak q}^{-1}(\tilde\partial_\star u)|=1
\end{equation}
holds \(\mm\)-a.e.\ on \(\mathcal T_u\).
We also define \(\partial_\star f\in\mathscr S_{\mathfrak q}(L^0(T\X_\star))\) as
\[
\partial_\alpha f\coloneqq\d\gamma_\alpha(\tilde\partial_\alpha f)\in L^0(T\X_\alpha)\quad\text{ for }\mathfrak q\text{-a.e.\ }\alpha\in Q.
\]
Given $v \in L^0(T\X)$ and a Borel set $A \subseteq \X$, we denote the Banach module generated by \(v\) on \(A\) as
\begin{equation*}
    \langle v \rangle_A\coloneqq\big\{ \1_A^\mm f\cdot v\;\big|\; f \in L^0(\mm) \big\}.
\end{equation*}
It has the structure of $L^0(\mm)$-Banach $L^0(\mm)$-module (and of $L^0(\mm|_A)$-Banach $L^0(\mm|_A)$-module).
\begin{theorem}\label{thm:disint_needles}
Let \((\X,\sfd,\mm)\) be an \({\sf RCD}(K,N)\) space, with \(K\in\R\) and \(N\in[1,\infty)\). Let \(u\colon\X\to\R\)
be a given \(1\)-Lipschitz function. Let us define \(\Theta^u\subseteq\mathscr S_{\mathfrak q}\big(L^0(T\X_\star)\big)\) as
\[
\Theta^u\coloneqq\bigg\{\sum_{n\in\N}\1_{E_n}^{\mathfrak q}\partial_\star f_n\;\bigg|\;
(E_n)_{n\in\N}\subseteq\mathcal Q\text{ partition of }Q,\,(f_n)_{n\in\N}\subseteq{\rm LIP}(\X)\bigg\}.
\]
Then \(\big(L^0(T\X_\star),\Theta^u,L^0(\mathfrak q;L^0(\mm_\star))\big)\) is a Banach module bundle consistent with \(\alpha\mapsto\mm_\alpha\). Also,
\[
\int L^0(T\X_\alpha)\,\d\mathfrak q(\alpha)\cong\langle\nabla u\rangle_{\mathcal T_u}\subseteq L^0(T\X),
\]
the canonical isomorphism \({\rm J}_u\colon\int L^0(T\X_\alpha)\,\d\mathfrak q(\alpha)\to\langle\nabla u\rangle_{\mathcal T_u}\)
being the unique homomorphism with
\[
{\rm J}_u\bigg(\int\partial_\alpha f\,\d\mathfrak q(\alpha)\bigg)=-\1_{\mathcal T_u}^\mm\langle\nabla f,\nabla u\rangle\nabla u
\quad\text{ for every }f\colon\X\to\R\text{ Lipschitz.}
\]
\end{theorem}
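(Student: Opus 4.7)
The plan is to verify the Banach module bundle axioms first, and then construct \({\rm J}_u\) directly on the generating set \(\Theta^u\) via the prescribed formula, before extending it by continuity to the completion.

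For the Banach module bundle structure, the only non-trivial axiom of Definition \ref{def:Banach_module_bundle} is condition (c): \(|v_\star|\in L^0(\mathfrak q;L^0(\mm_\star))\) for every \(v\in\Theta^u\). I would first reduce to the case \(v=\partial_\star f\) for \(f\in{\rm LIP}(\X)\) (the general case follows because \(L^0(\mathfrak q;L^0(\mm_\star))\) is closed under the \(\mathcal G_{\mathfrak q}(\Phi)\)-module operations). Since \(\gamma_\alpha\) is an isometry, the differential \(\d\gamma_\alpha\colon L^0(\mm_\alpha)\to L^0(T\X_\alpha)\) preserves the pointwise norm, so \(|\partial_\alpha f|=|\tilde\partial_\alpha f|\) \(\mm_\alpha\)-a.e. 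Combined with the result from \cite{CM20} that \(\tilde\partial_\star f\in L^0(\mathfrak q;L^0(\mm_\star))\), this yields \(|v_\star|\in L^0(\mathfrak q;L^0(\mm_\star))\). Here \(\Phi=L^0(\mathfrak q;L^0(\mm_\star))\) is obviously a subalgebra satisfying \(\mathcal G_{\mathfrak q}(\Phi)\) dense in \(L^0(\mm|_{\mathcal T_u})\).

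Next, define a candidate \(\tilde{\rm J}_u\colon\Theta^u\to\langle\nabla u\rangle_{\mathcal T_u}\) on single generators by \(\tilde{\rm J}_u(\partial_\star f)\coloneqq-\1_{\mathcal T_u}^\mm\langle\nabla f,\nabla u\rangle\nabla u\), and extend \(\mathcal G_{\mathfrak q}(\Phi)\)-linearly to sums \(\sum_n\1_{E_n}^{\mathfrak q}\partial_\star f_n\). Well-definedness reduces to the implication \(\partial_\star f=0\Rightarrow\langle\nabla f,\nabla u\rangle=0\) \(\mm\)-a.e.\ on \(\mathcal T_u\), which is an immediate consequence of \eqref{eq:ptwse_norm_partial_f}. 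Linearity in \(f\) is inherited from the linearity of both \(\tilde\partial_\alpha\) and \(\nabla\). The crucial property is pointwise-norm preservation, which is a direct comparison: on one side \({\sf j}_{\mathfrak q}(|\partial_\star f|)={\sf j}_{\mathfrak q}(|\tilde\partial_\star f|)=|\langle\nabla f,\nabla u\rangle|\1_{\mathcal T_u}\) by \eqref{eq:ptwse_norm_partial_f}; on the other side \(|-\1_{\mathcal T_u}^\mm\langle\nabla f,\nabla u\rangle\nabla u|=|\langle\nabla f,\nabla u\rangle|\,|\nabla u|\,\1_{\mathcal T_u}=|\langle\nabla f,\nabla u\rangle|\1_{\mathcal T_u}\), using \(\1_{\mathcal T_u}^\mm|\nabla u|=1\) as established right after \eqref{eq:ptwse_norm_partial_f}. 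Since \(\tilde{\rm J}_u\) is then a norm-preserving \(\mathcal G_{\mathfrak q}(\Phi)\)-linear map from the generating set \(\Theta^u\) of \(\int L^0(T\X_\alpha)\,\d\mathfrak q(\alpha)\) into the complete module \(\langle\nabla u\rangle_{\mathcal T_u}\), it extends uniquely to a norm-preserving \(L^0(\mm|_{\mathcal T_u})\)-linear homomorphism \({\rm J}_u\) on the completion. Surjectivity follows by taking \(f=u\): then \(\tilde{\rm J}_u(\partial_\star u)=-\1_{\mathcal T_u}^\mm|\nabla u|^2\nabla u=-\1_{\mathcal T_u}^\mm\nabla u\), so \(\nabla u\) lies in the image; multiplying by arbitrary elements of \(L^0(\mm|_{\mathcal T_u})\) (available via the extended module structure) yields all of \(\langle\nabla u\rangle_{\mathcal T_u}\).

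The main obstacle I expect is bookkeeping: correctly tracking the chain of canonical identifications between \(L^0(\mm_\alpha)\) and \(L^0(T\X_\alpha)\) through \(\d\gamma_\alpha\), between the \(L^0(\mathfrak q;L^0(\mm_\star))\)-element \(\tilde\partial_\star f\) and its image \(-\1_{\mathcal T_u}^\mm\langle\nabla f,\nabla u\rangle\in L^0(\mm|_{\mathcal T_u})\) under \({\sf j}_{\mathfrak q}\), and between the generators of \(\Theta^u\) and their images in \(\langle\nabla u\rangle_{\mathcal T_u}\). Once these identifications are pinned down, the whole argument rests on the single structural identity \(|\nabla u|=1\) \(\mm\)-a.e.\ on \(\mathcal T_u\) combined with \eqref{eq:ptwse_norm_partial_f}. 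An equivalent, and possibly cleaner, alternative is to invoke Theorem \ref{thm:tech_quotient} with \(\nu\) the outer measure induced by \(\mm|_{\mathcal T_u}\), \(\mathscr M=\langle\nabla u\rangle_{\mathcal T_u}\), \(\mathscr A\) the subalgebra of \(L^0(\mm|_{\mathcal T_u})\) generated by (restrictions of) Lipschitz functions, and \(\mathscr V\) the \(\mathscr A\)-submodule generated by \(\nabla u\), and then transport the resulting Banach module bundle via the fibrewise isomorphism \(\langle\nabla u\rangle_{\mathcal T_u,\mm_\alpha}\cong L^0(T\X_\alpha)\) described above.
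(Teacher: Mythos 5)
Your proposal follows essentially the same route as the paper: verify the bundle axioms via \eqref{eq:ptwse_norm_partial_f}, define \(\tilde{\rm J}_u\) on the generators \(\partial_\star f\) by the prescribed formula, check that it preserves the pointwise norm using \(\1_{\mathcal T_u}^\mm|\nabla u|=1\), extend by density, and deduce surjectivity from \(f=u\). The only differences are cosmetic (you also make explicit the well-definedness step and sketch an alternative via Theorem \ref{thm:tech_quotient}, neither of which changes the argument).
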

\begin{proof}
Since \(\Theta^u\) is a module over \(L^0(\mathfrak q;L^0(\mm_\star))\) and for any
\(v_\star=\sum_{n\in\N}\1_{E_n}^{\mathfrak q}\partial_\star f_n\in\Theta^u\) we have
\[
|v_\star|=\sum_{n\in\N}\1_{E_n}^{\mathfrak q}|\partial_\star f_n|=\sum_{n\in\N}\1_{E_n}^{\mathfrak q}|\tilde\partial_\star f_n|
\overset{\eqref{eq:ptwse_norm_partial_f}}\in L^0(\mm|_{\mathcal T_u}),
\]
we obtain that \(\big(L^0(T\X_\star),\Theta^u,L^0(\mathfrak q;L^0(\mm_\star))\big)\) is a Banach module bundle consistent with \(\alpha\mapsto\mm_\alpha\).
Now let us define the operator \(\tilde{\rm J}_u\colon\{\partial_\star f\;\big|\;f\in{\rm LIP}(\X)\}\to\langle\nabla u\rangle_{\mathcal T_u}\) as
\[
\tilde{\rm J}_u(\partial_\star f)\coloneqq-\1_{\mathcal T_u}^\mm\langle\nabla f,\nabla u\rangle\nabla u\quad\text{ for every }f\in{\rm LIP}(\X).
\]
Since \(\{\partial_\star f\,:\,f\in{\rm LIP}(\X)\}\) generates \(\int L^0(T\X_\alpha)\,\d\mathfrak q(\alpha)\) and we can estimate
\[
|\tilde{\rm J}_u(\partial_\star f)|=\1_{\mathcal T_u}^\mm|\langle\nabla f,\nabla u\rangle||\nabla u|
\overset{\eqref{eq:ptwse_norm_partial_f}}=|\partial_\star f|\quad\text{ for every }f\in{\rm LIP}(\X),
\]
the map \(\tilde{\rm J}_u\) can be uniquely extended to a homomorphism
\({\rm J}_u\colon\int L^0(T\X_\alpha)\,\d\mathfrak q(\alpha)\to\langle\nabla u\rangle_{\mathcal T_u}\)
that preserves the pointwise norm. Finally, let us check that \({\rm J}_u\) is surjective. Fix an arbitrary
element of \(\langle\nabla u\rangle_{\mathcal T_u}\), which can be written (uniquely) as \(h\nabla u\)
for some \(h\in L^0(\mm|_{\mathcal T_u})\). Hence, we have that
\[
{\rm J}_u(-h\cdot\partial_\star u)=-h\cdot{\rm J}_u(\partial_\star u)=
h\1_{\mathcal T_u}^\mm\langle\nabla u,\nabla u\rangle\nabla u=h\nabla u,
\]
thus proving the surjectivity of \({\rm J}_u\). Consequently, the statement is achieved.
\end{proof}
\begin{remark}{\rm
We point out that the above results hold in the more general setting of essentially non-branching, infinitesimally
Hilbertian \({\sf MCP}(K,N)\) spaces.
\fr}\end{remark}
\subsection{Unit normals and gradient lines}
In this conclusive section, we combine the results of Sections \ref{sec:integration_vf_BV} and \ref{sec:integral_vf_localization}.
First, we recall the following result, which follows from \cite[Theorem 2.4]{bru2019rectifiability}:
\begin{theorem}[Gauss--Green]\label{thm:Gauss-Green}
Let \((\X,\sfd,\mm)\) be an \({\sf RCD}(K,N)\) space. Let \(E\subseteq\X\) be a set of finite perimeter with \(\mm(E)<+\infty\).
Then there exists a unique element \(\nu_E\in L^0_{P(E,\cdot)}(T\X)\) such that
\[
\int_E{\rm div}(v)\,\d\mm=-\int\langle[v]_{P(E,\cdot)},\nu_E\rangle\,\d P(E,\cdot)\quad\text{ for every }v\in{\rm Test}(T\X).
\]
The identity \(|\nu_E|=1\) holds \(P(E,\cdot)\)-a.e.\ on \(\X\). Moreover, is \(\varphi\colon\X\to\R\) is bounded Lipschitz, then
\[
\int_E{\rm div}(\varphi v)\,\d\mm=-\int\varphi\langle[v]_{P(E,\cdot)},\nu_E\rangle\,\d P(E,\cdot)\quad\text{ for every }v\in{\rm Test}(T\X).
\]
\end{theorem}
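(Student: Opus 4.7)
My plan is to decompose the statement into two parts: the existence, uniqueness, and normalization of \(\nu_E\), and the extension incorporating a bounded Lipschitz weight \(\varphi\). For the first part, I would directly invoke \cite[Theorem 2.4]{bru2019rectifiability}, which provides an element \(\nu_E\in L^0_{P(E,\cdot)}(T\X)\) satisfying the integration-by-parts identity against test vector fields together with \(|\nu_E|=1\) \(P(E,\cdot)\)-a.e. Uniqueness is a consequence of the richness of \({\rm Test}(T\X)\): since \({\rm Test}(\X)\) is an algebra, \(gv\in{\rm Test}(T\X)\) whenever \(g\in{\rm Test}(\X)\) and \(v\in{\rm Test}(T\X)\). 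Subtracting two candidates \(\nu_E,\nu_E'\) and applying the first identity to \(gv\) yields
\[
\int g\,\langle[v]_{P(E,\cdot)},\nu_E-\nu_E'\rangle\,\d P(E,\cdot)=0\quad\text{for every }g\in{\rm Test}(\X),\,v\in{\rm Test}(T\X).
\]
Density of \({\rm Test}(\X)\) in \(L^1(P(E,\cdot))\) upgrades this to the pointwise vanishing \(\langle[v]_{P(E,\cdot)},\nu_E-\nu_E'\rangle=0\) \(P(E,\cdot)\)-a.e., and then varying \(v\) over a generating subset of \(L^0_{P(E,\cdot)}(T\X)\) forces \(\nu_E=\nu_E'\).

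For the \(\varphi\)-weighted identity, my strategy is to extend the first formula by approximation, using the Leibniz rule for the divergence in the \({\sf RCD}\) setting (cf.\ \cite{Gigli14}):
\[
{\rm div}(\varphi v)=\varphi\,{\rm div}(v)+\langle\nabla\varphi,v\rangle\in L^2(\mm),\quad\varphi\in{\rm LIP}(\X)\cap L^\infty(\mm),\,v\in{\rm Test}(T\X).
\]
I would pick an approximating sequence \((\varphi_n)\subseteq{\rm Test}(\X)\) with \(\sup_n\|\varphi_n\|_{L^\infty(\mm)}<+\infty\), \(\varphi_n\to\varphi\) in \(H^{1,2}(\X)\), and \({\rm QCR}(\varphi_n)\to{\rm QCR}(\varphi)\) in \(L^0({\rm Cap})\); the last mode of convergence transfers to \(L^0(P(E,\cdot))\) by \(P(E,\cdot)\ll{\rm Cap}\). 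For each \(n\) the product \(\varphi_n v\) is a test vector field, so the already-established identity applied to it reads
\[
\int_E\varphi_n\,{\rm div}(v)\,\d\mm+\int_E\langle\nabla\varphi_n,v\rangle\,\d\mm=-\int\varphi_n\,\langle[v]_{P(E,\cdot)},\nu_E\rangle\,\d P(E,\cdot).
\]
Dominated convergence on both sides (using \({\rm div}(v),|v|\in L^\infty(\mm)\cap L^2(\mm)\) and \(H^{1,2}\)-convergence on the bulk side; the uniform \(L^\infty\)-bound, \(|\nu_E|=1\), and finiteness of \(P(E,\cdot)\) on the perimeter side) then delivers the claim, once the left-hand side is recognised via the Leibniz rule as \(\int_E{\rm div}(\varphi v)\,\d\mm\).

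The main obstacle is the passage to the limit on the perimeter integral, which requires the approximating sequence to converge to \(\varphi\) simultaneously in the \(\mm\)-a.e.\ and \(P(E,\cdot)\)-a.e.\ senses, two a priori unrelated modes of convergence. This is precisely the motivation for the capacitary set-up recalled in Section~\ref{s:BV}: the absolute continuity \(P(E,\cdot)\ll{\rm Cap}\) from \eqref{eq:ac_Cap}, combined with the quasi-continuous representative machinery, supplies a single ambient topological vector space \(L^0({\rm Cap})\) in which both bulk and boundary restrictions are continuous, allowing one approximation to work uniformly on both sides of the identity.
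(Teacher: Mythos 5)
The paper itself offers no proof of this theorem: it is recalled with the sentence ``which follows from \cite[Theorem 2.4]{bru2019rectifiability}'' and then simply stated. So there is no ``paper's own proof'' against which to match your argument; what you have written is a reconstruction of what the citation is expected to supply, and that decomposition (cite the reference for existence and \(|\nu_E|=1\), argue uniqueness by density, derive the \(\varphi\)-weighted formula by approximation plus Leibniz) is a reasonable one.

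That said, the approximation step for the \(\varphi\)-weighted identity has a genuine gap. You ask for a sequence \((\varphi_n)\subseteq{\rm Test}(\X)\) with \(\varphi_n\to\varphi\) in \(H^{1,2}(\X)\). But a bounded Lipschitz function \(\varphi\) need not lie in \(H^{1,2}(\X)\): if \(\mm(\X)=+\infty\) and \(\varphi\) does not decay (e.g.\ \(\varphi\equiv 1\)), then \(\varphi\notin L^2(\mm)\), hence \(\varphi\notin H^{1,2}(\X)\), and no such sequence exists. The theorem only assumes \(\mm(E)<+\infty\), not \(\mm(\X)<+\infty\), so this case is in scope. The fix is to work locally: replace \(\varphi\) by \(\chi_R\varphi\) for a Lipschitz cut-off \(\chi_R\) equal to \(1\) on a large ball, use tightness of the finite measures \(\mm|_E\) and \(P(E,\cdot)\) together with the uniform bounds \(\|\varphi\|_\infty\), \(|v|\in L^\infty\), \(|\nu_E|=1\) to control the tails, and then mollify \(\chi_R\varphi\) (which does lie in \(H^{1,2}\)) by the heat flow. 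Alternatively, establish convergence \(\varphi_n\to\varphi\), \(\nabla\varphi_n\to\nabla\varphi\) only in \(L^2\) of bounded sets rather than globally, which is what the bulk integral over \(E\) actually requires.

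A smaller remark: the phrase ``density of \({\rm Test}(\X)\) in \(L^1(P(E,\cdot))\)'' is asserted without justification. It is true, but it is not immediate from the lemma in the paper (which only says that \(\mathcal G({\rm Test}(\X))\) is dense in \(L^0({\rm Cap})\), a different statement). A cleaner route to uniqueness: both \(h_+\,P(E,\cdot)\) and \(h_-\,P(E,\cdot)\), with \(h=\langle[v]_{P(E,\cdot)},\nu_E-\nu'_E\rangle\in L^\infty(P(E,\cdot))\), are finite Borel measures agreeing on \({\rm Test}(\X)\); since \({\rm Test}(\X)\) contains cut-off functions squeezed between any compact \(K\) and any bounded open \(U\supseteq K\) (by the construction recalled in the proof of the lemma that \({\rm Test}(\X)\) generates \(L^0({\rm Cap})\)), inner regularity forces the two measures to agree, whence \(h=0\) a.e.\ and the conclusion follows by varying \(v\) over a generating family.
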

\begin{remark}\label{rmk:when_TV_m}{\rm
Let \((\X,\sfd,\mm)\) be an \({\sf RCD}(K,N)\) space with \(\mm(\X)<+\infty\) and let \(u\colon\X\to\R\) be a
Lipschitz function. Then we know from \cite[Remark 3.5]{GigliHan14} that \(u\in BV(\X)\) and \(|Du|={\rm lip}(u)\mm\).
In particular, if \(\mathcal T_u=\X\) up to \(\mm\)-null sets, then \(|Du|=\mm\) and thus
\(\mm=\int_\R P(\{u>t\},\cdot)\,\d t\). Notice also that, since \(u\) is continuous, it holds that \(\bar u=u\),
so that \(\X_u=\X\) and \(J_u=\varnothing\). Therefore, we have that \(t\mapsto\mu^u_t\coloneqq P(\{u>t\},\cdot)\)
is a disjoint measure-valued map from \(\R\) to \(\X\) and Theorem \ref{thm:disintegration_L0Df} provides an
isomorphism of \(L^0(\mm)\)-Banach \(L^0(\mm)\)-modules \({\rm I}_u\colon L^0(T\X)\to\int L^0_{\mu^u_t}(T\X)\,\d t\).
\fr}\end{remark}

Now we are in a position to state and prove the main result of this section:
\begin{theorem}\label{thm:unit_normal_vs_needles}
Let \((\X,\sfd,\mm)\) be an \({\sf RCD}(K,N)\) space with \(\mm(\X)<+\infty\).
Fix a \(1\)-Lipschitz function \(u\colon\X\to[0,+\infty)\) such that \(\mathcal T_u=\X\) up to \(\mm\)-negligible sets.
Let \({\rm I}_u\colon L^0(T\X)\to\int L^0_{\mu^u_t}(T\X)\,\d t\) and
\({\rm J}_u\colon\int L^0(T\X_\alpha)\,\d\mathfrak q(\alpha)\to\langle\nabla u\rangle_\X\)
be as in Remark \ref{rmk:when_TV_m} and Theorem \ref{thm:disint_needles}, respectively. Then
\[
({\rm I}_u\circ{\rm J}_u)\bigg(\int\partial_\alpha u\,\d\mathfrak q(\alpha)\bigg)
=-\int_\R\nu_{\{u>t\}}\,\d t\in\int L^0_{\mu^u_t}(T\X)\,\d t.
\]
\end{theorem}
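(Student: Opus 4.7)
Apply Theorem~\ref{thm:disint_needles} with $f=u$ to compute
\[
{\rm J}_u\bigg(\int\partial_\alpha u\,\d\mathfrak q(\alpha)\bigg)=-\1_{\mathcal T_u}^\mm\langle\nabla u,\nabla u\rangle\nabla u=-\1_{\mathcal T_u}^\mm|\nabla u|^2\nabla u.
\]
Since $\mathcal T_u=\X$ up to $\mm$-null sets by hypothesis and $\1_{\mathcal T_u}^\mm|\nabla u|=1$ $\mm$-a.e.\ (as shown in the excerpt when computing $\tilde\partial u$, around \eqref{eq:ptwse_norm_partial_f}), the right-hand side simplifies to $-\nabla u\in L^0(T\X)$. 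Applying the isomorphism ${\rm I}_u$, the theorem reduces to
\[
{\rm I}_u(\nabla u)=\int_\R\nu_{\{u>t\}}\,\d t.\qquad(\dagger)
\]

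\noindent\textbf{Key Gauss--Green/coarea identity.}
Fix $\phi\in{\rm Test}(\X)$ and $w\in{\rm Test}(T\X)$. For $\mathscr L^1$-a.e.\ $t>0$ the superlevel set $\{u>t\}$ has finite perimeter, and Theorem~\ref{thm:Gauss-Green} applied with vector field $\phi w$ gives
\[
\int_{\{u>t\}}{\rm div}(\phi w)\,\d\mm=-\int\phi\langle w,\nu_{\{u>t\}}\rangle\,\d P(\{u>t\},\cdot).
\]
Integrating in $t\in(0,+\infty)$, swapping with Fubini (using $u\geq 0$ and $\int_0^\infty\1_{\{u>t\}}\,\d t=u$), and integrating by parts on the infinitesimally Hilbertian space yields
\[
\int\phi\langle\nabla u,w\rangle\,\d\mm=\int_0^\infty\!\!\int\phi\langle w,\nu_{\{u>t\}}\rangle\,\d P(\{u>t\},\cdot)\,\d t.\qquad(\star)
\]

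\noindent\textbf{Identification in the integral module.}
Granted that $t\mapsto\nu_{\{u>t\}}$ belongs to $\bar\Gamma(L^0_{\mu^u_\star}(T\X),\Theta_u,\Phi_u)$ in the sense of Theorem~\ref{thm:identif_int_strong} (see below), set $Y\coloneqq\int_\R\nu_{\{u>t\}}\,\d t$ and $Z\coloneqq{\rm I}_u^{-1}(Y)\in L^0(T\X)$. Since ${\rm I}_u$ is an isomorphism of $L^0(\mm)$-Banach $L^0(\mm)$-modules, it preserves the pointwise norm; by polarization in the Hilbert module $L^0(T\X)$, it preserves the $L^0(\mm)$-valued pointwise inner product. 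Combined with Fubini for $\mm=\int P(\{u>t\},\cdot)\,\d t$, this gives for any $\phi\in{\rm Test}(\X)$ and $w\in{\rm Test}(T\X)$
\[
\int\phi\langle Z,w\rangle\,\d\mm=\int_0^\infty\!\!\int\phi\langle\nu_{\{u>t\}},w\rangle\,\d P(\{u>t\},\cdot)\,\d t,
\]
which by $(\star)$ equals $\int\phi\langle\nabla u,w\rangle\,\d\mm$. Because the $L^0(\mm)$-span of $\{\phi w\}$ as $\phi$ varies in ${\rm Test}(\X)$ and $w$ in ${\rm Test}(T\X)$ generates $L^0(T\X)$, and the latter is Hilbertian, we deduce $Z=\nabla u$, establishing $(\dagger)$.

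\noindent\textbf{Main obstacle.}
The delicate technical point is showing that the section $t\mapsto\nu_{\{u>t\}}$ lies in $\bar\Gamma(L^0_{\mu^u_\star}(T\X),\Theta_u,\Phi_u)$, so that $\int_\R\nu_{\{u>t\}}\,\d t$ is well-defined. A natural approach is to approximate $u$ in $H^{1,2}(\X)$ by test functions $u_n\in{\rm Test}(\X)$, observe that $t\mapsto[\bar\nabla u_n]_{\mu^u_t}$ lies in $\Theta_u$, and use $(\star)$ with $u_n$ in place of $u$ together with passing to the limit and the Gauss--Green characterization of the unit normal at each level to identify the limiting fiber with $\nu_{\{u>t\}}$ for $\mathscr L^1$-a.e.\ $t$. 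The ancillary disintegration of the pointwise inner product invoked above rests only on pointwise-norm preservation under ${\rm I}_u$ and polarization, and is comparatively mild.
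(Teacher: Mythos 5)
The reduction in your first step (computing ${\rm J}_u\big(\int\partial_\alpha u\,\d\mathfrak q(\alpha)\big)=-\nabla u$ and rephrasing the goal as ${\rm I}_u(\nabla u)=\int_\R\nu_{\{u>t\}}\,\d t$) is correct, and the Gauss--Green/layer-cake computation leading to $(\star)$ is essentially the same calculation that appears in the paper. However, your overall strategy runs ``backward'': you first need $\int_\R\nu_{\{u>t\}}\,\d t$ to be a well-defined element of $\int L^0_{\mu^u_t}(T\X)\,\d t$, and only then apply ${\rm I}_u^{-1}$. You have correctly identified this as the ``main obstacle,'' but the sketch you give does not resolve it: verifying that $t\mapsto\nu_{\{u>t\}}$ belongs to $\bar\Gamma\big(L^0_{\mu^u_\star}(T\X),\Theta_u,\Phi_u\big)$ requires producing Borel representatives of $t\mapsto\big|\nu_{\{u>t\}}-[v]_{\mu^u_t}\big|$ for all $v\in{\rm Test}(T\X)$, and the approximation $u_n\to u$ you suggest does not obviously control the normals $\nu_{\{u>t\}}$, since the measures $\mu^u_t$ (which are fixed by $u$, not $u_n$) do not fit into $(\star)$ applied to $u_n$. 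So, as written, the proposal contains a genuine gap.

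The paper avoids this issue entirely by running the argument ``forward'': it sets $V\coloneqq({\rm I}_u\circ{\rm J}_u)\big(\int\partial_\alpha u\,\d\mathfrak q(\alpha)\big)$, which is \emph{automatically} a well-defined element of the integral module, and then identifies its fibers. The crucial device you are missing is the choice of test function $\varphi\circ u$ with $\varphi\in{\rm LIP}_{bs}(\R)$: because $\mu^u_t=P(\{u>t\},\cdot)$ is concentrated on $\{u=t\}$, the factor $\varphi\circ u$ becomes the constant $\varphi(t)$ on the $t$-fiber, and letting $\varphi$ vary disintegrates the $L^1(\d t)$-identity into a $\mathscr L^1$-a.e. fiberwise identity $V_t=-\nu_{\{u>t\}}$. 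Coupled with a countable family of generating test sections, this both proves the theorem and establishes, as a byproduct, that $t\mapsto\nu_{\{u>t\}}$ is a legitimate measurable section. Your use of a general $\phi\in{\rm Test}(\X)$ in $(\star)$ loses this fiber-localising structure, which is precisely why you are forced to take the well-definedness of $\int_\R\nu_{\{u>t\}}\,\d t$ as a hypothesis rather than a conclusion.
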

\begin{proof}
For brevity, we denote \(\partial_\star u=\int\partial_\alpha u\,\d\mathfrak q(\alpha)\) and
\(V\coloneqq({\rm I}_u\circ{\rm J}_u)(\partial_\star u)\in\int L^0_{\mu^u_t}(T\X)\,\d t\).
Fix a countable dense subset \(\mathcal C\) of \({\rm Test}(\X)\). Notice that the family \(\mathcal F\)
of those elements of \({\rm Test}(T\X)\) of the form \(\sum_{i=1}^n g_i\bar\nabla f_i\) for some \(f_i,g_i\in\mathcal C\)
generates \(L^0_{\rm Cap}(T\X)\). If \(\varphi\in{\rm LIP}_{bs}(\R)\) and \(f,g\in\mathcal C\), then
\[\begin{split}
\int_0^{+\infty}\varphi(t)\int g\langle[\nabla f]_{\mu^u_t},V_t\rangle\,\d P(\{u>t\},\cdot)\,\d t&=
\int_0^{+\infty}\!\!\int\big[\langle\varphi\circ u\,g\nabla f,{\rm J}_u(\partial_\star u)\rangle\big]_{\mu^u_t}\,\d P(\{u>t\},\cdot)\,\d t\\
&=-\int\langle\varphi\circ u\,g\nabla f,\nabla u\rangle\,\d\mm=\int u\,{\rm div}(\varphi\circ u\,g\nabla f)\,\d\mm\\
&=\int_0^{+\infty}\!\!\int_{\{u>t\}}{\rm div}(\varphi\circ u\,g\nabla f)\,\d\mm\,\d t\\
&=-\int_0^{+\infty}\!\!\int\varphi\circ u\,g\langle[\nabla f]_{\mu^u_t},\nu_{\{u>t\}}\rangle\,\d P(\{u>t\},\cdot)\,\d t\\
&=-\int_0^{+\infty}\varphi(t)\int g\langle[\nabla f]_{\mu^u_t},\nu_{\{u>t\}}\rangle\,\d P(\{u>t\},\cdot)\,\d t,
\end{split}\]
where we used Remark \ref{rmk:when_TV_m}, the layer cake representation formula, Theorem \ref{thm:Gauss-Green},
and the fact that the measure \(\mu^u_t=P(\{u>t\},\cdot)\) is concentrated on \(\partial\{u>t\}\subseteq\{u=t\}\).
Thanks to the arbitrariness of \(\varphi\in{\rm LIP}_{bs}(\R)\), we deduce that for every \(f,g\in\mathcal C\) it holds that
\[
\int\langle g[\nabla f]_t,V_t\rangle\,\d P(\{u>t\},\cdot)=-\int\langle g[\nabla f]_t,\nu_{\{u>t\}}\rangle\,\d P(\{u>t\},\cdot)
\quad\text{ for }\mathcal L^1\text{-a.e.\ }t\in\R.
\]
Since \(\mathcal F\) is countable, we can find a \(\mathscr L^1\)-null set \(N\subseteq\R\) such that
\(\int\langle w,V_t\rangle\,\d\mu^u_t=-\int\langle w,\nu_{\{u>t\}}\rangle\,\d\mu^u_t\) holds
for every \(t\in\R\setminus N\) and \(w\in\mathcal F\). It follows that \(V_t=-\nu_{\{u>t\}}\) for
\(\mathcal L^1\)-a.e.\ \(t\in\R\), so that accordingly \(({\rm I}_u\circ{\rm J}_u)(\partial_\star u)=V=-\nu_{\{u>\star\}}\).
Therefore, the proof of the statement is complete.
\end{proof}

Observe that Theorem \ref{thm:unit_normal_vs_needles} applies, for example, to \(u\coloneqq\sfd(\cdot,\bar x)\)
for any point \(\bar x\in\X\).
\begin{remark}{\rm
We point out that Theorem \ref{thm:disintegration_L0Df} can be generalised to functions of \emph{local bounded variation},
thus in turn the finiteness assumption on \(\mm\) in Theorem \ref{thm:unit_normal_vs_needles} can be dropped. However, we
do not pursue this goal here, since the results are already quite technically demanding.
\fr}\end{remark}
\def\cprime{$'$} \def\cprime{$'$}

\end{document}